\documentclass[11pt]{article}
\usepackage{epsfig}
\usepackage{amsmath,amssymb,amsthm,url,verbatim,pdfsync}
\usepackage{mathtools} 
\usepackage{mathrsfs}   
\usepackage[utf8]{inputenc}
\usepackage{hyperref}

\usepackage{tikz,eucal,enumerate,mathrsfs,bbm}
\usepackage{tikz-cd}

\numberwithin{equation}{section}
\numberwithin{figure}{section}

\newtheorem{thm}{Theorem}[section]
\newtheorem{theorem}[thm]{Theorem}
\newtheorem{lemma}[thm]{Lemma}
\newtheorem{prop}[thm]{Proposition}

\newtheorem{defn}[thm]{Definition}

\def\R{{\mathbb R}}
\def\vphi{{\varphi}}

\def\F{{\cal F}}
\def\G{{\cal G}}

\def\L{{\cal L}}
\def\T{{\cal T}}

\def\calP{{\cal P}}
\def\calE{{\cal E}}

\def\Bbar{{\overline{B}}}

\def\Abar{{\overline{A}}}
\def\Vbar{{\overline{V}}}
\def\Gbar{{\overline{G}}}
\def\vbar{{\overline{v}}}
\def\abar{{\overline{a}}}
\def\cbar{{\overline{c}}}
\def\fbar{{\overline{f}}}

\def\cleq{\preccurlyeq}
\def\cgeq{\succcurlyeq}

\def\la{\langle}
\def\ra{\rangle}
\def\pa{{\partial}}
\def\ep{\epsilon}
\def\nn{\nonumber}
\def\vspc{{\vspace{-0.2in}}}

\def\gmax{{g_{max}}}
\def\gmin{{g_{min}}}
\def\hbar{{ \bar{h}}}

\def\xdot{{\dot{x}}}

\def\gammadot{{\dot{\gamma}}}

\def\alphabar{{\bar{\alpha}}}

\def\ubar{{\bar{u}}}
\def\vbar{{\bar{v}}}

\def\wbar{{\bar{w}}}

\def\Tbar{{\bar{T}}}

\def\fhat{{\hat{f}}}

\def\tWF{{\text{WF}}}
\def\tsupp{{\text{supp}}}

\title{Fixed angle inverse scattering with non-constant velocity}
\author{ 
Lauri Oksanen\thanks{Department of Mathematics and Statistics, University of Helsinki, PO Box 68, 00014 Helsinki,
Finland. Email: lauri.oksanen@helsinki.fi},
~
Rakesh\thanks{Department of Mathematical Sciences, University of Delaware, Newark DE 19808, USA.
Email: rakesh@udel.edu},
~
Mikko Salo\thanks{Department of Mathematics and Statistics, P.O. Box 35 (MaD), FI-40014 University of
Jyväskylä, Finland. Email: mikko.j.salo@jyu.fi}
} 
\date{July 17, 2026}

\begin{document}
\maketitle
\tableofcontents

\begin{abstract}
In this article, we study formally determined inverse problems for wave equations in the presence of a variable 
sound speed. We prove that by measuring the boundary data of finitely many plane waves and their 
complementary solutions, one can uniquely recover the unknown coefficients of the highest order terms of a second 
order hyperbolic operator with time independent coefficients. This improves earlier 
rigidity results in \cite{ors26a}, \cite{ors26b} which compared the wave operator generated by a Riemannian metric
with the wave operator generated by the Euclidean metric. We compare two general second order hyperbolic 
operators with time independent coefficients, with the same lower order terms. However, we 
require the geometry associated with {\bf one of the } operators to satisfy a pseudoconvexity condition, 
a no-caustics condition, and a spanning condition. In particular one of the operators could be a wave operator with the 
sound speed close to a constant and the other operator could be arbitrary. 

To prove the results, we introduce the notion of a complementary solution for a generalized plane wave solution 
generated by an incoming plane wave. The complementary solution extends smoothly, across an interface, the generalized plane wave. The unknown coefficients appear in a transport equation at the interface. We show that the unknown coefficients in the interior can be extracted from this transport equation, from the boundary data, 
via a sequence of Carleman estimates for the wave operator.
\end{abstract}

%%%%%%%%%%
%%%%%%%%%%%

\section{Introduction}\label{sec:intro}
 
Our goal is to determine the acoustic properties of an inhomogeneous medium from the medium response, measured on 
the boundary of the inhomogeneous part,
to sources based in the homogeneous part. So, neither the sources nor the measurements are generated or measured 
in the unknown 
inhomogeneous part of the medium. There is substantial work on problems for media where waves travel 
with constant velocity so the unknown properties are associated with the lower order terms of the differential 
operator modeling the medium. We study problems for media where the waves may not travel with 
constant velocity and our goal is to recover the velocity (isotropic or anisotropic)
or the lower order terms of the differential operator. The problems for media with non-constant velocities present 
substantial challenges because of the complicated structure of the solutions of the forward problem, compared to problems 
for media with constant velocities. 
Further, our focus is on problems where the data comes from only a 
finite number (dimension dependent) of experiments - the formally determined problems.

Let $B$ be the origin centered open ball of radius $1$ in $\R^n$, $n \geq 2$. We say the differential operator
\[
\L := \pa_t^2 - \sum_{i,j=1}^n a_{ij}(x) \pa_i \pa_j - \sum_{i=1}^n b_i(x) \pa_i - c(x),
\]
on $\R^n \times \R$, is {\bf admissible}, if $a_{ij}(x), b_i(x), c_i(x)$ are smooth real valued functions on $\R^n$ with
$A(x) = (a_{ij}(x))$ positive definite and $A(x)-I, b_i(x), c(x)$ supported in the open ball $B$. We also
consider the geometric operator 
\[
\Box_g := \pa_t^2 - \Delta_g
\]
where $g$ is a smooth Riemannian metric on $\R^n$ and $\Delta_g$ is the Laplace-Beltrami operator associated with 
$g$. In coordinates, if $g= (g_{ij})$, $g^{-1} = (g^{ij})$ and $|g|$ denotes the determinant of $g$, then
\[
\Delta_g = |g|^{-1/2} \pa_i \left ( |g|^{1/2} g^{ij} \pa_j \right ) = g^{ij} \pa_i \pa_j + |g|^{-1/2} \pa_i \left ( |g|^{1/2} g^{ij} \right ) \pa_j .
\]
 A smooth Riemannian metric $g$ on $\R^n$ is said to be {\bf admissible} if $g- g_{Eucl}$ is supported in $B$; here
 $g_{Eucl}$ is the Euclidean metric on $\R^n$.
{\em Throughout the article, we work only with admissible $\L$ or with admissible metrics $g$}, and we use
the Einstein summation convention. We also define the operator
\[
\calE := a_{ij} \pa_i \pa_j + b_i \pa_i + c.
\]
For any matrix $M$, $\|M\|_\infty$ will denote the largest of the
absolute values of the entries of $M$. Throughout the article, we assume $n$ is a positive integer, $n \geq 2$.

We consider two `fixed angle' inverse scattering problems. 
\vspc
\begin{itemize}
\item (The $c$ determination problem). 
Fix a unit vector $\omega$ in $\R^n$ and suppose $A(x), b(x)$ are known.
Let $U_c$ be the solution of the initial value problem
\begin{subequations}
\begin{align}
\L U_c =0, & \qquad \text{on } \R^n \times \R,
\label{eq:Ucde}
\\
U_c(x,t) = \delta(t- x \cdot \omega), & \qquad \text{on } \R^n \times (-\infty, -1).
\label{eq:Ucic}
\end{align}
\end{subequations}
Is $\F : c \to U_c|_{\pa B \times (-\infty, T)}$ injective if $T$ is large enough?
\item (The $A$ determination problem) Suppose $b,c$ are known and $\Omega$ is a finite set of unit vectors in $\R^n$. For each $\omega \in \Omega$, let $U_{A,\omega}$ be the solution of the initial value problem
\begin{subequations}
\begin{align}
\L U_{A,\omega} =0, & \qquad \text{on } \R^n \times \R,
\label{eq:UAde}
\\
U_{A,\omega}(x,t) = H(t- x \cdot \omega), & \qquad \text{on } \R^n \times (-\infty, -1).
\label{eq:UAic}
\end{align}
\end{subequations}
Is $\F : A \to [U_{A,\omega}|_{\pa B \times (-\infty, T)}]_{\omega \in \Omega}$ injective if $T$ is 
large enough? 

Since the goal is to recover $A(x) = (a_{ij}(x))$ which is 
made up of $n(n+1)/2$ functions, one is likely to need data generated by
incoming wave sources from at least $n(n+1)/2$ different directions
$\omega$. Hence the need for data associated with a finite set of unit vectors in $\Omega$.
\end{itemize}

There is a $g$ determination problem, whose statement is similar to the 
$A$ determination problem, except the 
operator $\L$ is replaced by the operator $\Box_g$ and the goal is the recovery of $g$ instead of $A$. 
One cannot expect to recover $g$ from $\F(g)$ because, for any diffeomorphism 
$\Psi : \R^n \to \R^n$ with $\Psi(x) = x$ for $x$ in $\R^n \setminus B$, we have $\F(g) = \F(\psi^*(g))$. So one 
hopes to recover $g$ only up to diffeomorphisms of the type $\Psi$. The $g$ determination problem is studied in a 
forthcoming manuscript and its generalization to Lorentzian metrics (though with slightly different data) is studied 
in the companion article \cite{ors26d}.

For the $c$ determination problem, we use only one incoming direction $\omega$, so we do not display 
the dependence of $U_c$ on 
$\omega$. For the $A$ determination problem, we use several incoming directions $\omega$, so we display 
the dependence on $\omega$ of $U_{A,\omega}$. 
We use a delta plane wave source to define $U_c$ but use a Heaviside plane wave source to define
$U_{A,\omega}$. This is for convenience because the first term in the progressing wave expansion of $U_c$ is independent of $c$, while the the first term in the progressing wave expansion of 
$U_{A,\omega}$ does depend on $A$. One could have used an incoming Heaviside plane wave to 
define a new $U_c$ but the old $U_c$ would just be the $t$ derivative of the new $U_c$.

The $A$ determination problem is more challenging than the $c$ determination problem since a variation in 
$A$ (so a variation in the speed of propagation of waves) has a more complex impact on $U_{A,\omega}$ 
than a variation of the lower order coefficient $c$ has on $U_c$. Our techniques should carry over to the problem 
of determining $b$ for known $A,c$. For the $g$ determination problem, in addition to the invariance of 
$\F(g)$ under certain diffeomorphisms, there is the added complexity that the operator $\Box_g$ depends on 
the first order derivatives of $g$ in addition to its dependence on $g$, 
unlike the $A$ and $c$ determination problems. Hence the $g$ determination problem presents more 
challenges than the $A$ or $c$ determination problems. These additional challenges have been resolved in the forthcoming manuscript for the $g$ determination problem and its generalization (with slightly different data) to Lorentzian metrics in the companion article \cite{ors26d}.

In \cite{rs20a}, we proved a stability result for the $c$ determination problem but only when $A=I, b=0$ and 
we needed the data generated by two sources - a plane wave coming from direction $\omega$ and a plane 
wave coming from direction $-\omega$. In \cite{ms22}, this result for the $c$ determination problem was 
generalized to the $c$ determination problem for the operator $\Box_g + c$, where $g$ need not be the Euclidean metric but $g$ had to satisfy
three geometrical conditions - a `no caustics' condition for the geodesics originating in region $x \cdot \omega \leq -1$ with initial velocity $\omega$, a geometrical symmetry condition for these geodesics, and the existence
of a function on $\Bbar$ which is convex w.r.t the geodesics of $g$.

{\em For the $c$ determination problem, we obtain a uniqueness result where $g=A^{-1}$ is not required to 
satisfy the geometrical symmetry condition in \cite{ms22}, but must still satisfy the other two conditions 
mentioned in the previous paragraph.} However, for our result, 
the second piece of data is not the medium response to a plane wave source coming from the direction 
$-\omega$. Instead, the second piece of data is the medium response to a boundary source constructed from 
the data 
generated by the first source $\delta(t-x \cdot \omega)$  - without 
knowing $A,b,c$ inside $B$. Theorem \ref{thm:cunique}, stated in section \ref{sec:main}, is a 
careful statement of the result.

In \cite{ors26a} we studied the $g$ determination problem and showed that $\F(g) = \F(g_{Eucl})$ 
implies $g=g_{Eucl}$ up to a diffeomorphism of $\R^n$ which is the identity outside $B$; we needed a large enough $T$ and we used
\[
\Omega = \{ e_i : i=1, \cdots, n \} \cup \{ (e_i + e_j)/\sqrt{2} : i,j=1, \cdots, n, ~ i \neq j\}.
\]
A simple adaptation of the proof in \cite{ors26a} shows that for admissible $\L$ with $b=0$, $c=0$, we have 
$\F(A) = \F(I)$ if and only if  $A=I$, for the above $\Omega$ and $T$ large enough;
note the lack of a gauge invariance. The same proof also shows that if $A = \rho^{-1} I$ for some positive smooth 
function $\rho$ on $\R^n$ with 
$\rho=1$ outside $B$ and $b=0, ~c=0$, then $\F (A) = \F(I)$ if and only if $\rho=1$, for $T$ large enough
and $\Omega$ having just a single
element. In \cite{ors26b}, we generalized the result in \cite{ors26a} to the operator
$\Box_h$ where $h(x,t)$ is a Lorentzian metric on $\R^n \times \R$ with a temporal function, $h$ the Minkowski 
metric outside $B \times \R$, $h$ independent of $t$ for large $t$, and the comparison was with the Minkowski metric. 

{\em For the $A$ determination problem, we obtain a uniqueness result  
where the comparison is not just with $A_*=I$ but with a more general class of $A_*$.} However, for our result, we  
have restrictions on the $A_*$ being compared to and the second piece of data is similar to the one mentioned 
above in the description of the result for the $c$ determination problem. 
For our result, the metric $g=A_*^{-1}$ must satisfy the `no 
caustics' condition, and the existence of a function on $\Bbar$ which is convex w.r.t the geodesics of $g$.
We also need a `spanning condition' on the set of vectors $\{\nabla \alpha_\omega\}_{\omega \in \Omega}$, 
where $\alpha_\omega$ are certain solutions of the eikonal equation for $g_*=A_*^{-1}$. 
Theorem \ref{thm:Aunique}, stated in section \ref{sec:main}, is a careful statement of the result.

Here is a brief list of some novel technical points of this article.
\vspc
\begin{itemize}
\item 
We prove uniqueness results for unknown sound speeds with one of the two sound speeds being 
compared required to satisfy certain properties, but neither of these sound speeds needs to be constant. 
This generalizes the results in \cite{ors26a} \cite{ors26b} where one of 
the two sound speeds being compared was required to be constant.
\item 
To each distorted plane wave solution we associate a complementary solution, so that the sum of  
the distorted plane wave solution and its complementary solution is smooth enough across the characteristic 
surface where the distorted plane wave is singular. For our uniqueness result, we need
the boundary data from the distorted plane wave solution as well as for its complementary solution.
\item 
The one sided trace of the distorted plane wave solution on the characteristic surface (where the solution is 
singular) satisfies a transport equation which relates the trace to a solution of the eikonal equation associated
with the unknown sound speed. This relation is critical to prove the uniqueness result using
a modified Bukhgeim-Klibanov method (see \cite{rs20a} and \cite{ms22}) which uses Carleman estimates for inverse problems in a clever way. For the sound speed problem, one must use
an additional trick, due to Romanov \cite{romanov2002}, for adjusting the domains of the distorted plane
wave solutions.
\item 
For matrix valued sound speeds, we introduce a matrix spanning condition to relate the unknown sound speeds to eikonal solutions.
\end{itemize}

%%%%%%%%%%%%%%%
%%%%%%%%%%%%%%%

\section{Statements of the main results}\label{sec:main}

IVP stands for Initial Value Problem, IBVP stands for Initial Boundary Value Problem, and
\[
P(x) \cleq Q(x) \qquad \text{or} \qquad P(x) \cgeq Q(x) \qquad \qquad \text{for all } x \in K
\]
means there is a constant $C>0$, independent of $x \in K$, such that
\[
P(x) C \leq C \, Q(x) \qquad \text{or} \qquad P(x)  \geq C \, Q(x) \qquad \qquad \text{for all } x \in K.
\]

The principal symbol of $\L$ is $-\tau^2 + a_{ij}(x) \xi_i \xi_j$ with $(x,t;\xi,\tau) \in T^*(\R^n \times \R)$,
which is the same as the principal symbol of $\Box_g$ with $g=A^{-1}$. As seen in \cite{ors26a}, the null bicharacteristics 
of $\Box_g$ are associated with the unit speed geodesics of $(\R^n,g)$, so the null bicharacteristics of $\L$ are associated 
with the unit speed geodesics of the Riemannian metric $g=A^{-1}$. Throughout the article, the Riemannian metric 
associated with $\L$ will be $g=A^{-1}$. Note that if $\L$ is admissible then $g=A^{-1}$ is an admissible Riemannian 
metric.

For an admissible Riemannian metric $g$ on $\R^n$ define
\[
\la v,w \ra := g(x)(v,w), ~~ \|v\| := \sqrt{ \la v, v \ra }, \qquad v,w \in T_x(\R^n).
\]
We reserve $|v|$ and $v \cdot w$ for the Euclidean norm and dot product. There are optimal positive constants
$g_{min}, g_{max}$ such that
\[
g_{min} |v|^2 \leq g(x) (v,v) \leq g_{max} |v|^2, \qquad  \forall v \in T_x(\R^n), ~ x \in \R^n
\]
where $|v|^2$ is its Euclidean norm. The unit speed geodesics of $g$ travel with Euclidean speeds between
$1/\sqrt{\gmax}$ and $1/\sqrt{\gmin}$. So for $g=A^{-1}$, $g_{min} = A_{max}^{-1}$ and $g_{max}= A_{min}^{-1}$
where $A_{min}, A_{max}$ are the optimal positive constants such that 
\[
A_{min} |v|^2 \leq v^T A(x) v  \leq A_{max} |v|^2, \qquad \forall v \in T_x(\R^n), ~ \forall x \in \R^n,
\]
and the geodesics of $g=A^{-1}$ travel with Euclidean speeds between $\sqrt{A_{min}}$ and $\sqrt{A_{max}}$.

For the admissible Riemannian metric $(\R^n,g)$, a covector $\xi \in T_x^*(\R^n)$ may be identified with a vector 
$v \in T_x(\R^n)$ through
\[
\xi(w) = g(x)(v,w), \qquad \forall w \in T_x(\R^n).
\]
In coordinates, $\xi = (g_{ij}(x))v$ so $v = g(x)^{-1}\xi$. 
For any $f \in C^\infty(\R^n)$, define $(\nabla_g f)(x) \in T_x(\R^n)$ to be the vector
identified with the covector $(\nabla f)(x) \in T_x^*(\R^n)$; in coordinates 
\[
(\nabla_g f)(x) = (g_{ij}(x))^{-1} (\nabla f)(x).
\]
If $\gamma: [c,d] \to \R^n$ is a (piecewise-smooth continuous) curve on $\R^n$, its length is defined to be
\[
L(\gamma):= \int_c^d \| \gammadot(r) \| \, dr.
\]
For points $p,q$ in $\R^n$, we define the distance between $p$ and $q$ as
\[
d_g(p,q) = \inf \{ L(\gamma): \text{ $\gamma$ is a curve from $p$ to $q$} \}.
\] 
One knows that the infimum is attained and attained by a geodesic and $(\R^n,d_g)$ is a complete metric space - see 
\cite{ors26a}.

Suppose $(\R^n, g)$ is an admissible Riemannian metric.
Fix a unit vector $\omega$ in $\R^n$ and define the hyperplane
\[
P_\omega = \{ p \in \R^n : p \cdot \omega = -1 \}.
\]
For $p \in P_\omega$, let $\gamma_{p,g,\omega} : \R \to \R^n$ denote the geodesic of $(\R^n,g)$ with 
\[
\gamma_{p,g,\omega}(-1)=p, \qquad  \gammadot_{p,g,\omega}(-1) = \omega.
\]
One can show that $\gamma_{p,g,\omega}$ has constant speed (in the Riemannian metric) and, due to the bounded Euclidean speed of propagation, 
$\gamma_{p,g,\omega}(r)$ is defined for all $r \in \R$.
Define the {\em time of first arrival} function $\alpha_{g,\omega}: \R^n \to \R$ with
\[
\alpha_{g,\omega}(x) := x \cdot \omega,  \qquad \text{for $x \in \R^n$ with $x \cdot \omega \leq -1$},
\]
and
\[
\alpha_{g,\omega}(x) := \inf \{ d_g(x,p) -1 : p \in P_\omega \}, \qquad \text{for $x \in \R^n$ with $x \cdot \omega \geq -1$}.
\]
We have shown in \cite{ors26a} that this infimum is attained and, for each $x \in \R^n$,  there is a 
$p \in P_\omega$ and an $r \in \R$ such that
\[
\gamma_{p,g,\omega}(r) = x, ~~ r = \alpha_{g,\omega}(x).
\]
Further $\alpha_{g,\omega}$ is Lipschitz continuous on $\R^n$.

For a fixed unit vector $\omega$ in $\R^n$ and an admissible metric $(\R^n,g)$ we construct a Lagrangian 
submanifold of $T^*(\R^n \times \R)$, which turns out to be the wave front set of 
of $U_c$ and of $U_{A,\omega}$, if we take $g=A^{-1}$.
For each $p \in P_\omega, \tau \in \R$, let $(x(t,p,\tau), \xi(t,p,\tau))$ 
be\footnote{The $x(t,p,\tau), \xi(t,p,\tau)$ depend on
$\omega$ but, to avoid cumbersome notation, we do not show the dependence on $\omega$.}
the solution of the IVP
\begin{subequations}
\begin{align}
\xdot = - \frac{1}{\tau} g^{-1} \xi, & \qquad \dot{\xi}_k = - \frac{1}{2\tau} (g^{-1} \xi)^T \pa_{x_k}(g) (g^{-1} \xi),
 \qquad k=1, \cdots, n,
 \label{eq:nullde}
\\
x(t=-1,p,\tau) = p, & \qquad \xi(t=-1, p, \tau) = - \tau \omega.
\label{eq:nullic}
\end{align}
\end{subequations}
In \cite[Section 2.2.1]{ors26a}, it was shown that this IVP has a unique solution, and this 
solution exists for all $t \in \R$. Further, $x(t,p,\tau)$ is independent of $\tau$, so we write $x(t,p)$ instead of $x(t,p,\tau)$. 
It was also shown that
\begin{equation}
x(t,p) = \gamma_{p,g,\omega}(t), \qquad \xi(t,p,\tau) = - \tau g(x(t,p)) \gammadot_{p,g,\omega}(t).
\label{eq:covecvec}
\end{equation}
Define
\begin{equation}
\Lambda_{g,\omega} := \{ \left ( x(t,p), t; \xi(t,p,\tau), \tau \right ) : p \in P_\omega, ~ t \in \R, ~ \tau \in \R, ~ \tau \neq 0 \};
\label{eq:lambdadef}
\end{equation}
from standard theory, $\Lambda_{g,\omega}$ is a Lagrangian submanifold of $T^*(\R^n \times \R)$. 
Noting \eqref{eq:covecvec}, we identify $\Lambda_{g,\omega}$ with
\begin{equation}
\Lambda_{g,\omega} := \{ \left (\gamma_{p,g,\omega}(t),t; \gammadot_{p,g,\omega}(t), 1 \right ) : 
p \in P_\omega \},
\end{equation}
a subset of the unit sphere bundle (actually the $\sqrt{2}$ length bundle) of $T(\R^n \times \R)$.  
If $K$ is a subset of $\R^n$ or $\R^n \times \R$, we define
\[
\Lambda_{g,\omega}|_K = \Lambda_{g,\omega} \cap \pi^{-1}(K),
\]
where $\pi$ is the relevant projection of $T^*(\R^n \times \R)$ onto $\R^n$ or $\R^n \times \R$. 

If $\Omega$ is an open subset of $\R^m$ then $H^s(\Omega)$ denotes the usual Sobolev space and we say 
$u \in H^s_{loc}(\Omega)$ if $u \in H^s(D)$ for every bounded open subset $D$ of $\Omega$.

The following proposition claims the existence and uniqueness of the solutions $U_c, U_{A,\omega}$. Its proof is placed 
in the appendix as its proof is similar to the proofs in \cite{ors26b} for the forward problem.
\begin{prop}[The forward problem]\label{prop:uforward}
Suppose $\omega$ is a unit vector in $\R^n$, $\L$ is an admissible operator on $\R^n \times \R$, and $g=A^{-1}$. 
\begin{enumerate}[(a)]
\item The IVP \eqref{eq:Ucde}, \eqref{eq:Ucic} has a unique distributional solution $U_c$. Further, 
$U_c \in H^{-1}_{loc} (\R^n \times \R)$, $\tWF(U_c) = \Lambda_{g,\omega}$, and 
$\tsupp(U_c) \subset \{ t \geq \alpha_{g,\omega}(x) \}$.
\item The IVP \eqref{eq:UAde}, \eqref{eq:UAic} has a unique distributional solution $U_{A,\omega}$. Further, 
$U_{A,\omega}\in L^2_{loc} (\R^n \times \R)$, $\tWF(U_{A,\omega}) = \Lambda_{g,\omega}$, and 
$\tsupp(U_{A,\omega}) \subset \{ t \geq \alpha_{g,\omega}(x) \}$.
\end{enumerate}
\end{prop}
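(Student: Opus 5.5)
We reduce the singular IVP to a well-posed IVP with a source by subtracting the incoming plane wave. We treat (b) first and obtain (a) afterwards.

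\textbf{Reduction.} Let $U^0 = H(t - x\cdot\omega)$. Outside $B$ we have $\L = \pa_t^2 - \Delta$, and this annihilates $U^0$. Hence $F := -\L U^0$ is supported in $\Bbar \times \R$. Since $A=I$ and $b=c=0$ near $\pa B$, $F$ is in fact supported in a compact subset of $B \times \R$ intersected with $\{t \geq x\cdot\omega\}$, and in particular in $t \geq -1$. The distribution $F$ is a combination of $\delta'(t-x\cdot\omega)$, $\delta(t-x\cdot\omega)$ and $H(t-x\cdot\omega)$ with smooth coefficients, so $F \in H^{-2}_{loc}$.

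We look for $U_{A,\omega} = U^0 + W$ with
\[
\L W = F, \qquad W = 0 \text{ for } t < -1 .
\]
Standard theory for strictly hyperbolic operators with smooth time-independent coefficients gives a unique distributional solution of this forward problem. One can use energy estimates in $H^s$ for all $s$, together with duality, or time-differentiate to raise regularity. Uniqueness of $U_{A,\omega}$ follows because the difference of two solutions solves $\L V=0$ and vanishes for $t<-1$. Outside $B$ the operator is the flat wave operator, so finite speed of propagation bounded by $\sqrt{A_{max}}$ keeps the support of $W$ controlled.

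\textbf{Regularity, wavefront set and support.} Regularity is better obtained by a progressing wave expansion than from the crude estimate $W \in H^{-1}_{loc}$. Where $\alpha = \alpha_{g,\omega}$ is smooth (away from caustics), we write
\[
U_{A,\omega} \sim \sum_k a_k(x)\,(t-\alpha(x))_+^k / k! ,
\]
with $a_k$ solving transport equations along $\gamma_{p,g,\omega}$. The remainder is then as smooth as we wish.

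Globally, including at caustics, we use a microlocal argument instead:
\begin{itemize}
\item Singularities propagate along null bicharacteristics (Hörmander).
\item For $t<-1$ we have $\tWF(U) = \{(x,t;-\tau\omega,\tau)\}$, which coincides with $\Lambda_{g,\omega}$ there.
\item $\Lambda_{g,\omega}$ is the flowout of this set under the Hamiltonian flow \eqref{eq:nullde}.
\end{itemize}
Together these give $\tWF(U_{A,\omega}) \subset \Lambda_{g,\omega}$. The reverse inclusion follows because the principal symbol is nonvanishing initially and is transported along the flow, in the sense of the Duistermaat–Hörmander propagation for Lagrangian distributions. Equivalently, $U$ is a Lagrangian distribution in $I^{\mu}(\Lambda_{g,\omega})$ with nonzero symbol. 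The order $\mu$ is fixed by the initial data, and it gives $L^2_{loc}$ for the Heaviside case and $H^{-1}_{loc}$ for the delta case. More precisely, $H$ lies in $H^{1/2-\ep}_{loc}$ in one variable, and the Lagrangian class is preserved.

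For the support property, take the domain of dependence $\{t < \alpha(x)\}$. The function $\alpha$ is Lipschitz with $\|\nabla_g \alpha\| \leq 1$ a.e., since it is a $g$-distance function to $P_\omega$ shifted by a constant. Hence the level sets of $t-\alpha$ are weakly spacelike. An energy estimate on the region $\{t<\alpha(x)\}$ shows that $U$ vanishes there, because $U$ vanishes there for $t<-1$. This is Lipschitz rather than smooth, so we would either approximate $\alpha$ by smooth functions $\alpha_\ep$ with $\|\nabla_g\alpha_\ep\|<1$, or use distance-function finite speed of propagation directly.

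\textbf{Part (a).} We simply set $U_c = \pa_t U_{A,\omega}$, using the same $\L$. Since the coefficients are $t$-independent, $\pa_t$ commutes with $\L$, and $\pa_t H(t - x\cdot\omega) = \delta(t - x\cdot\omega)$. Uniqueness, the support property and the wavefront set carry over directly; the wavefront set is unchanged because $\tau \neq 0$ on $\Lambda_{g,\omega}$. Regularity drops by one order, from $L^2_{loc}$ to $H^{-1}_{loc}$.

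\textbf{Main obstacle.} The hardest step is the global regularity $U_{A,\omega} \in L^2_{loc}$ past caustics, because there the progressing wave expansion breaks down. I would first try to cite the Lagrangian distribution calculus, as in \cite{ors26b}. The alternative is the energy argument: write $U = \pa_t V$ with $V$ solving the IVP with data $(t-x\cdot\omega)_+$, and show that $V \in H^1_{loc}$ by energy estimates, since its source is in $L^2_{loc}$. This gives $U \in L^2_{loc}$ without any caustic analysis, and I would adopt it as the primary route.
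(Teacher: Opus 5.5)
Your outline is sound in most places. Subtracting an incoming wave to get a forward problem with a past-supported source works. So does obtaining $\tWF(U)=\Lambda_{g,\omega}$ from propagation of singularities, using that every null bicharacteristic enters $\{t<-1\}$, and so does deriving (a) from (b) via $U_c=\pa_t U_{A,\omega}$. However, the route you single out as primary for $U_{A,\omega}\in L^2_{loc}$ fails, because the source for $V$ is not in $L^2_{loc}$. Since $\pa_t^2(t-x\cdot\omega)_+=\delta(t-x\cdot\omega)$ and $\pa_i\pa_j(t-x\cdot\omega)_+=\omega_i\omega_j\,\delta(t-x\cdot\omega)$,
\[
\L\,(t-x\cdot\omega)_+=\bigl(1-\omega^TA(x)\,\omega\bigr)\,\delta(t-x\cdot\omega)+(b\cdot\omega)\,H(t-x\cdot\omega)-c\,(t-x\cdot\omega)_+ .
\]
The first term is nonzero wherever $\omega^TA\omega\neq 1$ in $B$; there the plane $t=x\cdot\omega$ is no longer characteristic. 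A term $h(x)\,\delta(t-x\cdot\omega)$ lies only in $H^{-1/2-\ep}_{loc}$. Energy estimates therefore give only $V\in H^{1/2-\ep}_{loc}$, so $U_{A,\omega}=\pa_tV\in H^{-1/2-\ep}_{loc}$, which is half a derivative short of $L^2_{loc}$. Likewise $U_c\in H^{-3/2-\ep}_{loc}$, which is short of $H^{-1}_{loc}$. Replacing $H$ by $(\cdot)_+$ lowers the singularity of the incident wave and of the source by the same order, so it cannot repair the loss. It is the same loss as in your direct estimate for $W$, where in fact $F\in H^{-3/2-\ep}_{loc}$ and $W\in H^{-1/2-\ep}_{loc}$.

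What works is the microlocal argument you sketched and then demoted, and it is exactly what the paper does. For $t<-1$ we have $U_{A,\omega}=H(t-x\cdot\omega)\in H^{1/2-\ep}_{loc}$. By the $H^s$ propagation of singularities theorem, and since every null bicharacteristic enters $\{t<-1\}$, the $H^{1/2-\ep}$ wave front set of $U_{A,\omega}$ misses the characteristic set. Off the characteristic set, $\L U_{A,\omega}=0$ and elliptic regularity give microlocal smoothness. Hence $U_{A,\omega}\in H^{1/2-\ep}_{loc}\subset L^2_{loc}$ and $U_c\in H^{-1/2-\ep}_{loc}\subset H^{-1}_{loc}$.

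Caustics play no role: Sobolev wave front sets propagate along bicharacteristics in $T^*(\R^n\times\R)$ whatever their projections do, so your ``main obstacle'' is not one. Nor do you need Lagrangian symbols for $\tWF(U)\supset\Lambda_{g,\omega}$, since H\"ormander's theorem already makes $\tWF(U)$ flow-invariant in both directions.

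For the support property, your plan to apply domain of dependence to $U$ itself is fine, but it needs a distributional justification. The paper avoids this by cutting off the plane wave with $\chi(t)$ switching on $[-3,-2]$. The resulting source then sits in $\{x\cdot\omega\le t\le -2\}$, where $\alpha_{g,\omega}=x\cdot\omega$. This reduces the claim to a $d_g$ estimate showing that the causal future of $\{x\cdot\omega\le t\le-1\}$ lies in $\{t\ge\alpha_{g,\omega}(x)\}$. Your source $-\L H(t-x\cdot\omega)$ does not allow this shortcut, because it lives in $B$, where $\alpha_{g,\omega}$ can exceed $x\cdot\omega$.
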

\vspc
Since $\Lambda_{g,\omega}$ does not intersect the normal bundle of $\pa B \times \R$, 
\cite[Corollary 8.2.7]{hor85} implies that
$U_c, U_{A,\omega}$ have traces on $\pa B \times \R$,  hence the maps $\F$, used to state the
$c,A$ recovery problems, are well defined. 

For our result for the $c$ problem and the $A$ problem, the metric $g=A^{-1}$ must satisfy a `no caustics' condition with 
respect to its geodesics originating in the region $x \cdot \omega \leq -1$ with initial velocity $\omega$. We state this condition carefully next.
%
%%%%%%%%%%%%%%%%%%%%%%%
\begin{defn}\label{def:Phi}
Suppose $(\R^n,g)$ is an admissible Riemannian metric, $\omega$ is a unit vector in $\R^n$ and $T_0>1$. 
We say the collection $(g,\omega,T_0)$ satisfies the {\em Exterior Injectivity Condition (EIC)} if the map 
$\Phi_{g,\omega} : P_\omega \times (-\infty, T_0) \to \R^n$ with 
\[
\Phi_{g,\omega}(p,r) = \gamma_{p,g,\omega}(r), \qquad p \in P_\omega, ~ r \in (-\infty, T_0),
\]
is injective on $\Phi_{g,\omega}^{-1}( \R^n \setminus B)$, and there is a $T_1<T_0$ such that
\[
\{ x \in \R^n : x \cdot \omega =1 \} \subset \Phi_{g,\omega}(P_\omega \times (-\infty, T_1]).
\]
\end{defn}
\vspc
Define $D_{g,\omega}$ to be range of $\Phi_{g,\omega}$, that is
\[
 D_{g,\omega} := \Phi_{g,\omega}( P_\omega \times (-\infty, T_0) ).
\]
The following proposition, needed in our proofs and an immediate 
consequence\footnote{
The hypothesis of  \cite[Proposition 2.3]{ors26a} requires $T_0 > 2 \sqrt{g_{max}} -1$ but that proof goes through if
we are just guaranteed the existence of the $T_1$, and that $T_1<T_0$. 
}
of \cite[Proposition 2.3]{ors26a}, asserts
that if $(g,\omega,T_0)$ satisfies the EIC then $\Phi_{g,\omega}$ is a diffeomorphism. 
\begin{prop}[EIC implies diffeomorphism]\label{prop:exinj}
Suppose $(\R^n,g)$ is admissible, $\omega$ is a unit vector in $\R^n$, and $T_0>1$. 
If $(g,\omega,T_0)$ satisfies EIC then $\Phi_{g,\omega}$ is a diffeomorphism onto its range $D_{g,\omega}$, 
$D_{g,\omega}$ is an open subset of $\R^n$, and
\[
\{ x \in \R^n : x \cdot \omega \leq 1 \} \subset \Phi_{g,\omega}( P_\omega \times (-\infty, T_1] ),
\]
where $T_1$ is given in the definition of the EIC property.
Further, $D_{g,\omega} = \{ x \in \R^n : \alpha_{g,\omega}(x) < T_0 \}$ and $\alpha_{g,\omega}$ is a smooth function on $D_{g,\omega}$ with the following properties:
\vspc
\begin{itemize}
\item if $x \in D_{g,\omega}$ then $x = \gamma_{p,g,\omega}(r)$ iff $r = \alpha_{g,\omega}(x)$;
\item $\|\nabla_{g,\omega} \alpha_{g,\omega}\|^2=1$ on $D_{g,\omega}$;
\item for $x \in D_{g,\omega}$, $(\nabla_g \alpha_{g,\omega})(x) = \gammadot_{p,g,\omega}(r)$ for the unique
$p \in P_\omega$, $r<T_0$ with $\gamma_{p,g,\omega}(r) = x$.
\end{itemize}
\end{prop}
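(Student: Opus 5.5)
The plan is to show that $\Phi := \Phi_{g,\omega}$ is an injective local diffeomorphism from $P_\omega \times (-\infty,T_0)$ into $\R^n$. Invariance of domain and the inverse function theorem then give that the range $D_{g,\omega}$ is open and that $\Phi$ is a diffeomorphism onto it. The properties of $\alpha_{g,\omega}$ will come from comparing $\Phi$ with the first-arrival characterization quoted from \cite{ors26a}.

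\emph{Step 1 (structure outside $B$).} For $p \in P_\omega$, the geodesic $\gamma_{p,g,\omega}$ is the straight line $p+(r+1)\omega$ until it first meets $\Bbar$. On $\{x\cdot\omega\le -1\}$, $\Phi$ is simply $(p,r)\mapsto p+(r+1)\omega$, which is bijective onto that half space. Combining this with the hypothesis $\{x\cdot\omega=1\}\subset\Phi(P_\omega\times(-\infty,T_1])$ and the injectivity of $\Phi$ on $\Phi^{-1}(\R^n\setminus B)$, I would show that every point of the slab $\{-1\le x\cdot\omega\le 1\}\setminus B$ is hit exactly once, with parameter at most $T_1$. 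The argument runs as follows. A point $x$ in the slab outside $B$ lies on the line $x+s\omega$. This line reaches the plane $x\cdot\omega=1$ at a point $y=\gamma_{q}(r_y)$ with $r_y\le T_1$. The geodesic $\gamma_q$ arrives there with some velocity. I would trace it backward using the bound on its Euclidean speed and the fact that it is a straight line outside $\Bbar$. A simpler route is a degree argument: $B$ is compact, and for $|x|$ large and $x\cdot\omega\le 1$ the map $\Phi$ is the identity shift. Consequently the image of $\{r\le T_1\}$ covers the region between $P_\omega$ and $\{x\cdot\omega=1\}$, including $\Bbar$. So $\{x\cdot\omega\le 1\}\subset\Phi(P_\omega\times(-\infty,T_1])$.

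\emph{Step 2 (no conjugate points, i.e.\ local diffeomorphism).} This is the main obstacle. Injectivity on all of $P_\omega\times(-\infty,T_0)$ must be derived from injectivity outside $B$ alone. I would follow the argument of \cite[Proposition 2.3]{ors26a}, with $T_0>2\sqrt{\gmax}-1$ replaced by the existence of $T_1<T_0$. Suppose $\Phi(p,r)=\Phi(p',r')=x\in \Bbar$ with $(p,r)\neq(p',r')$, or suppose $d\Phi$ is singular at some $(p,r)$ with $r<T_0$. Each geodesic leaves $\Bbar$ before parameter $T_1$, since it crosses $\{x\cdot\omega=1\}$ by then and never returns into $B$ inside the strip. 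Extending both geodesics slightly past $\Bbar$, we obtain a collision or a fold occurring outside $B$ at parameters below $T_0$. A collision outside $B$ contradicts the hypothesis directly. For a fold, the Jacobian degenerates along a family of geodesics, and I would use the local degree together with the global degree-one property from Step 1 to produce two preimages of some nearby exterior point, again contradicting the hypothesis. The delicate part is making the fold argument rigorous. My first attempt would be: a zero of $\det d\Phi$ forces nearby non-injectivity, because a proper map of degree one with a critical point at an interior point of an exterior neighbourhood cannot be injective there.

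\emph{Step 3 ($\alpha_{g,\omega}$).} By \cite{ors26a}, every $x$ equals $\gamma_{p,g,\omega}(\alpha(x))$ for some minimizing $p$. If $\alpha(x)<T_0$, then $x\in D_{g,\omega}$. Conversely, if $x=\Phi(p,r)$ then $\alpha(x)\le r$. The minimizer also gives a preimage, so injectivity forces $r=\alpha(x)$. Hence $D_{g,\omega}=\{\alpha<T_0\}$, and $\alpha$ is the second component of $\Phi^{-1}$, which makes it smooth. For $\|\nabla_g\alpha\|=1$ and $\nabla_g\alpha=\gammadot$, I would use the Gauss lemma. Differentiating $\alpha(\Phi(p,r))=r$ in $r$ gives $\la\nabla_g\alpha,\gammadot\ra=1$. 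Differentiating in $p$ gives $\la\nabla_g\alpha,\pa_p\Phi\ra=0$, because $\la\pa_p\Phi,\gammadot\ra$ is constant in $r$ (geodesic variation, unit speed) and vanishes at $r=-1$ since there $\pa_p\Phi\perp\omega$. Since $d\Phi$ is invertible, $\nabla_g\alpha=\gammadot$, which has unit norm.
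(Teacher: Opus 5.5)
The paper does not prove this statement here. It defers to \cite[Proposition 2.3]{ors26a}, noting only that the hypothesis $T_0>2\sqrt{\gmax}-1$ used there can be replaced by the existence of $T_1$. Judged on its own, your proposal has a genuine gap in Step 2, which is where the substance lies.

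The mechanism ``extending both geodesics slightly past $\Bbar$ produces a collision or a fold outside $B$'' is false.
\begin{itemize}
\item If $\gamma_p(r)=\gamma_{p'}(r')=x\in B$ with different velocities at $x$, the two geodesics leave $\Bbar$ at different points along different straight lines. They need not meet outside $B$ at all.
\item A focal point of $P_\omega$ inside $B$ need not produce one outside. In the flat exterior the normal Jacobi field is affine and can stay nonzero.
\end{itemize}
Topology cannot repair this either. A proper degree-one map that is injective outside $B$ can still contain a closed pleat inside $B$: a region of local degree $-1$ whose image is covered three times. Also, a critical point does not force local non-injectivity (think of $s\mapsto s^3$). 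So degree theory plus exterior injectivity cannot by itself give interior injectivity; some variational property of the family $\{\gamma_p\}$ must enter.

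The missing idea is the first-arrival characterization of $\alpha_{g,\omega}$. You already use it in Step 3, but only after assuming injectivity; it should instead be the engine of Step 2. Suppose $\gamma_p$ passes, at some parameter $r<T_0$, either a focal point of $P_\omega$, or a point $x=\gamma_p(r)=\gamma_{p'}(r')$ with $(p',r')\neq(p,r)$ and $r'\le r$.
\begin{itemize}
\item Then $\gamma_p$ stops minimizing the distance to $P_\omega$ right after $r$. In the focal case this follows from the index form. In the crossing case it follows by shortening the curve formed by $\gamma_{p'}|_{[-1,r']}$ followed by $\gamma_p|_{[r,s]}$.
\item By the triangle inequality, $\alpha(\gamma_p(s))<s$ for every $s>r$.
\item Pick $s\in(r,T_0)$ with $\gamma_p(s)\notin B$. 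A minimizer $q$ for $\gamma_p(s)$ gives a second preimage $(q,\alpha(\gamma_p(s)))$ with parameter below $T_0$, contradicting the EIC.
\end{itemize}
This rules out focal points and crossings at all parameters below $T_0$, so $\Phi_{g,\omega}$ is an injective local diffeomorphism, and your Step 3 then goes through.

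Choosing $s$ requires that every $\gamma_p$ leaves $B$ for good before $T_0$. Your claim that each $\gamma_p$ crosses $\{x\cdot\omega=1\}$ by parameter $T_1$ does not follow directly from the EIC. The EIC only says that every point of that plane is reached by \emph{some} geodesic by time $T_1$, so an argument is needed. For instance, the hitting map from the plane to $P_\omega$ is:
\begin{itemize}
\item continuous, by compactness and exterior injectivity;
\item injective;
\item a translation outside a compact set.
\end{itemize}
Hence it is onto, by invariance of domain and properness. Similarly, the degree argument in Step 1 must be run on $\{(p,r):-1\le r\le\rho(p)\}$, where $\rho(p)$ is the first hitting time of the plane. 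It cannot be run on $\{r\le T_1\}$, because the image of $\{r=T_1\}$ need not lie in the plane.
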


\vspc
Our results for the $c,A$ problem require the hypothesis 
that $(g=A^{-1},\omega,T_0)$ satisfies EIC. We have two observations regarding the verification of the EIC.
\begin{itemize}
\item The injectivity condition in the EIC is equivalent to the statement that, for a fixed 
$\omega,g$, and $p$ varying in $P_\omega$,
the geodesics $r \to \gamma_{p,g,\omega}(r)$ do not intersect or self intersect outside $B$ before time $T_0$.
\item For a fixed $\omega$ and a fixed eligible $\L$ with unknown $A$ or $c$, 
the EIC condition for $(g=A^{-1}, \omega, T_0)$ 
can be verified from the inverse problem data $U_{A,\omega}|_{\pa B \times (-\infty, T_0)}$ or 
$U_{c}|_{\pa B \times (-\infty, T_0)}$ because, for example, knowing $U_{A,\omega}|_{\pa B \times (-\infty, T_0)}$, 
one can determine $U_{A,\omega}$ on $(\R^n \setminus B) \times (-\infty, T_0)$ by solving 
an exterior IBVP. Then, using the tangent bundle interpretation of 
$\Lambda_{g,\omega}$ and that $\tWF(U_{A,\omega}) = \Lambda_{g,\omega}$ (for $g=A^{-1}$), one can check the 
non-intersection of geodesics condition and that the hyperplane $x \cdot \omega =1$ is a subset of the projection 
of the singular support of $U_{g,\omega}|_{ t \leq T_1}$ for some $T_1 < T_0$. 
\end{itemize}
\vspc
The following proposition, about the transfer of the EIC property from one operator (metric associated with the operator) to 
another, will be useful for us.  It is a quick consequence of Proposition \ref{prop:exinj} and the 
proof is given in the appendix.
\begin{prop}[EIC transfer]\label{prop:eictransfer}
Let $\omega$ be a unit vector in $\R^n$ and $T_0>1$.
Suppose $\L := \pa_t^2 - a_{ij} \pa_i \pa_j - b_i \pa_i - c$ and $\L' := \pa_t^2 - a'_{ij} \pa_i \pa_j - b_i \pa_i - c $ are 
admissible and $(g=A^{-1},\omega, T_0)$ satisfies EIC. If
$U_{A,\omega} = U_{A',\omega}$ on $\pa B \times (-\infty, T_0)$ then $(g'=A'^{-1},\omega,T_0)$ satisfies
EIC, $D_{g,\omega} = D_{g',\omega}$, and $\alpha_{g,\omega} = \alpha_{g',\omega}$ on $\R^n \setminus B$.
\end{prop}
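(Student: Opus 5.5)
The strategy is to use the boundary data to reconstruct both solutions in the exterior region. From there, wave front set information will show that the two families of exterior geodesics coincide. The EIC for $g'$ and the equality of the first arrival functions will then follow.

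\textbf{Step 1: the solutions agree outside $B$.} Set $W = U_{A,\omega} - U_{A',\omega}$. Since $A = A' = I$ and $b,c$ are the same outside $B$, $W$ solves $\pa_t^2 W - \Delta W - b\cdot\nabla W - cW = 0$ on $(\R^n\setminus \Bbar)\times(-\infty,T_0)$, and there the lower order terms vanish as well. Moreover $W = 0$ for $t < -1$, and $W = 0$ on $\pa B\times(-\infty,T_0)$. Uniqueness for the exterior IBVP with zero Dirichlet data then gives $W = 0$ on $(\R^n\setminus B)\times(-\infty,T_0)$. Since the solutions are only distributions, I would justify this either by mollifying in $t$ or by a duality argument; this is routine.

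\textbf{Step 2: the geodesics agree outside $B$.} By Proposition \ref{prop:uforward}, $\tWF(U_{A,\omega}) = \Lambda_{g,\omega}$ and $\tWF(U_{A',\omega}) = \Lambda_{g',\omega}$. Hence
\[
\Lambda_{g,\omega}|_{(\R^n\setminus \Bbar)\times(-\infty,T_0)} = \Lambda_{g',\omega}|_{(\R^n\setminus \Bbar)\times(-\infty,T_0)}.
\]
Under the tangent-bundle identification, this says that the set of points $(\gamma_{p,g,\omega}(t),t,\gammadot_{p,g,\omega}(t))$ with $t<T_0$ and $\gamma_{p,g,\omega}(t)\notin\Bbar$ equals the corresponding set for $g'$. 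A point in this set determines the whole geodesic, because geodesics of $g$ and $g'$ coincide with Euclidean straight lines outside $B$.

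I then need to match the labels $p$. I would show that $\gamma_{p,g,\omega}$ and $\gamma_{p,g',\omega}$ agree on each maximal open time interval during which they lie outside $\Bbar$. For the first interval they coincide trivially, since both are the line $p+(t+1)\omega$ until they enter $\Bbar$. For a later exterior segment of $\gamma_{p,g,\omega}$, Step 2's equality gives some $q$ with the same exterior data for $\gamma_{q,g',\omega}$. One must then show $q = p$, i.e.\ that the $g'$-geodesics exiting $B$ at a given time are correctly labelled. This is the main obstacle.

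My first attempt is the following. Injectivity of $\Phi_{g,\omega}$ outside $B$ means that each exterior point of $\Lambda_{g,\omega}$ with $t<T_0$ comes from a unique $(p,t)$. So the map from exterior points of $\Lambda$ to labels is well defined for $g$, and I need the analogous statement for $g'$. To get it I would use the relation \eqref{eq:covecvec} together with the fact that $t = \alpha$ along the flowout. The flowout time $t$ is recorded in $\Lambda$, so a point $(x,t,v)$ of $\Lambda_{g',\omega}$ lies on $\gamma_{q,g',\omega}$ at parameter $t$. Injectivity for $g'$ on $\Phi_{g',\omega}^{-1}(\R^n\setminus B)$ then reduces to the following: if $\gamma_{q,g',\omega}(t)=\gamma_{q',g',\omega}(t')=x\notin B$, then $(x,t,\dot\gamma)$ and $(x,t',\dot\gamma')$ lie in $\Lambda_{g',\omega}=\Lambda_{g,\omega}$ over the exterior, and the injectivity of $\Phi_{g,\omega}$ forces $t=t'$ and $\dot\gamma=\dot\gamma'$. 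Uniqueness of geodesics then gives $q=q'$. This argument proves EIC injectivity for $g'$ directly, without matching labels with $p$.

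\textbf{Step 3: the covering condition, $D$, and $\alpha$.} The set $\{x\cdot\omega = 1\}$ lies outside $B$. Every point on it is $\gamma_{p,g,\omega}(r)$ for some $r\le T_1$, so its exterior lift lies in $\Lambda_{g,\omega}=\Lambda_{g',\omega}$ and is hit by a $g'$-geodesic at time $r\le T_1$. Thus $(g',\omega,T_0)$ satisfies EIC with the same $T_1$.

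Proposition \ref{prop:exinj} then applies to both metrics. It gives $\alpha_{g,\omega}(x)$ and $\alpha_{g',\omega}(x)$ as the unique parameter $r<T_0$ with $x=\gamma(r)$, and exterior points with $r<T_0$ correspond to the same lifts in $\Lambda$. Hence $D_{g,\omega}\setminus B = D_{g',\omega}\setminus B$ and $\alpha_{g,\omega}=\alpha_{g',\omega}$ there. Since $\alpha$ is Lipschitz, the equality extends to $\pa B$.

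Finally, $\Bbar\subset\{x\cdot\omega\le1\}\subset D$ for both metrics by Proposition \ref{prop:exinj}. Therefore $D_{g,\omega} = D_{g',\omega}$.
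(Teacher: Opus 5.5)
Your Steps 1 and 2 and your injectivity argument for $g'$ follow the paper's route: exterior uniqueness, equality of $\Lambda_{g,\omega}$ and $\Lambda_{g',\omega}$ over the exterior, and transferring injectivity through the EIC for $g$. The covering step is also fine. However, the proposal does not prove the last assertion of the proposition.

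\textbf{Missing step: $\alpha$ outside $D$.} You obtain $\alpha_{g,\omega}=\alpha_{g',\omega}$ only on $D_{g,\omega}\setminus B$, but the statement claims it on all of $\R^n\setminus B$. The set $\R^n\setminus D_{g,\omega}=\{\alpha_{g,\omega}\ge T_0\}$ is nonempty; it contains every $x$ with $x\cdot\omega$ large. On this set, the wave front sets over $t<T_0$ say nothing about $\alpha$, Proposition \ref{prop:exinj} does not describe it, and Lipschitz continuity does not help.

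The missing idea is to use the variational definition $\alpha_{g,\omega}(x)=\inf_{p\in P_\omega}d_g(x,p)-1$ together with $g=g'$ on $\R^n\setminus B$. This is what the paper does in its final paragraph:
\begin{itemize}
\item Take $x_0\notin D_{g,\omega}$, so $x_0\cdot\omega>1$. Take a minimizing $\gamma_{p,g,\omega}$ with $\gamma_{p,g,\omega}(r_0)=x_0$ and $r_0=\alpha_{g,\omega}(x_0)$.
\item Let $q=\gamma_{p,g,\omega}(r_1)$ be its last point on $\{x\cdot\omega=1\}$. The subarc to $q$ is also minimizing, so $\alpha_{g,\omega}(q)=r_1$.
\item Since $q\in\{x\cdot\omega\le 1\}\setminus B\subset D_{g,\omega}\setminus B$, your result gives $\alpha_{g',\omega}(q)=r_1$.
\item The tail from $q$ to $x_0$ lies in $\{x\cdot\omega\ge 1\}\subset\R^n\setminus B$, where $g=g'$. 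So its $g'$-length is $r_0-r_1$.
\item Hence $\alpha_{g',\omega}(x_0)\le\alpha_{g',\omega}(q)+d_{g'}(q,x_0)\le r_0=\alpha_{g,\omega}(x_0)$.
\end{itemize}
The reverse inequality follows by swapping $g$ and $g'$. This is legitimate once you have shown that $(g',\omega,T_0)$ satisfies the EIC, so that $\{x\cdot\omega\le1\}\subset D_{g',\omega}$.

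\textbf{Smaller gap: lifts over $\pa B$.} Your equality of the $\Lambda$'s holds over the open set $(\R^n\setminus\Bbar)\times(-\infty,T_0)$. Yet your injectivity argument (``$x\notin B$'') and the covering step use lifts over $\pa B$; for instance, the point $x=\omega$ of $\{x\cdot\omega=1\}$ lies on $\pa B$. You therefore need
\[
\Lambda_{g,\omega}|_{\pa B\times(-\infty,T_0)}=\Lambda_{g',\omega}|_{\pa B\times(-\infty,T_0)}.
\]
The paper gets this from its sign-of-normal-velocity argument. In your framework there is a quick fix. First, $\Lambda\cap\{t<T_0\}$ is closed: the Euclidean speeds are bounded, so the labels $p$ stay bounded. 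Second, both metrics are Euclidean near $\pa B$, and a straight line through a point of the unit sphere lies outside $\Bbar$ on at least one side. So every lift over $\pa B$ with $t<T_0$ is a limit of exterior lifts with $t<T_0$.
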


\vspc
When $(g=A^{-1},\omega, T_0)$ satisfies the EIC, the restrictions of 
$U_c, U_{A,\omega}$ to $D_{g,\omega} \times \R$ are conormal distributions 
associated with the surface $t=\alpha_{g,\omega}$ and have progressing wave expansions, 
which play a crucial role in our proofs.
We define the objects needed for the progressing wave expansion.
On the region $D_{g,\omega}$, define the vector field $\T_{g,\omega}$ as
\begin{align*}
\T_{g,\omega} f :=   \la \nabla_g \alpha_{g,\omega}, \nabla_g f \ra_g.
\end{align*}
For $p \in P_\omega$, we know that $\gammadot_{p,g,\omega} = \nabla_g \alpha_{g,\omega}$ hence
\begin{equation}
( \T_{g,\omega} f)(\gamma_{p,g,\omega}(r)) = \frac{d}{dr} \left ( f( \gamma_{p,g,\omega}(r) ) \right ),
\qquad r < T_0.
\label{eq:transport}
\end{equation}
Also note that when $g=A^{-1}$ we have
\begin{align}
\T_{g,\omega} f = a_{ij} \, \pa_i \alpha_{g,\omega} \, \pa_j f.
\label{eq:transportA}
\end{align}
We use the following distributions on $\R$:
\[
K_{-3}(s) := \delta''(s), ~~ K_{-2}(s) := \delta'(s), ~~K_{-1}(s) := \delta(s), \qquad 
K_i(s) := \frac{s_+^i}{i!}, \qquad i=0,1,2, \cdots,
\]
and note that $\pa_s K_i = K_{i-1}$ for $i \geq -2$. 

When $(g,\omega, T_0)$ has the EIC, Proposition \ref{prop:ualpha} in the appendix gives detailed structural 
information about $U_c, U_{A,\omega}$, including their progressing wave expansions, obtained using a standard 
procedure.
This proposition is important for our proofs of the uniqueness results for the $c$ and $A$ problems, and for the 
construction of the crucial `complementary' solutions.

%%%%%%%%%%%%%%%%%%%% 

For the $c$ problem, with $A=I$ and $b=0$, \cite{rs20a} has a stability result but for the problem
with data associated with two different incoming waves - one traveling in the direction $\omega$ and the other traveling 
in the direction $-\omega$. These two solutions are `complementary' in a sense needed for the proof of the result.
In \cite{ms22}, this proof was adapted to the $c$ problem for the operator $\Box_g + c$ with $(\R^n,g)$ admissible, 
$\omega = e_n$, and
\[
g(x) = \begin{bmatrix} h(y) & 0 \\ 0 & 1 \end{bmatrix}
\]
for a Riemannian metric $(h(y), \R^{n-1})$, where the data was associated with sources consisting of
incoming plane waves from the directions $e_n$ and $-e_n$. For this $g$, note that $(g, \pm e_n, T)$ satisfies the EIC property for all $T$,
and $\alpha_{g,e_n} = x_n = \alpha_{g,-e_n}$. The {\bf single wave} fixed angle scattering problem for $c$, for the 
operator $\L$ or for $\Box_g + c$, remains unsolved even when $A=I$, $b=0$ or $g=g_{Eucl}$.

We prove our results for the $c$ and $A$ problems using a modification of the Bukhgeim-Klibanov method 
(see \cite{rs20a}, \cite{ms22}) which require
a solution $w \in C^2(\Bbar \times \R)$ of $\L w=0$ so that the unknown coefficient can be recovered from the 
(perhaps one sided) trace of $w$ on the surface $t=\alpha_{g,\omega}$. While the $U$ solutions in 
Proposition \ref{prop:ualpha} have the trace property, they fail to be $C^2$ on $\Bbar \times \R$ because of the singularity
on $t=\alpha_{g,\omega}$. So one needs data from a second solution $V$ (we call it the complementary solution) so that $U+V$ has the desired properties. We construct the complementary solution $V$ using a source constructed from the data from the $U$ solution, that is $U|_{\pa B \times \R}$ without knowing the coefficients of $\L$ on $B$.
We fix a positive integer $p > (n+1)/2 + 1$ to be used for the constructions of the complementary solutions 
$V_c, V_{A,\omega}$ for all the $c,A$.
\begin{prop}[The complementary solutions]\label{prop:complementary}
Suppose $\L$ is an admissible operator on $\R^n \times \R$ and 
$(g=A^{-1}, \omega, T_0)$ satisfies the EIC. Fix a positive integer $p > (n+1)/2 + 1$.
\vspc
\begin{enumerate}[(a)]
\item Let $U_c$ be the solution of the IVP \eqref{eq:Ucde}, \eqref{eq:Ucic}, and $u_c, f_{i}$, $i=-1, \cdots, p$ be
the smooth functions guaranteed by Proposition \ref{prop:ualpha}. Define the distribution
\[
\psi_c(x,t)= - f_{-1}(x) \delta(\alpha_{g,\omega}(x) -t) + \sum_{i=0}^p (-1)^i f_i(x) K_i( \alpha_{g,\omega}(x)-t) , 
\qquad (x,t) \in \pa B \times \R.
\]
Let $V_c$ be the unique solution\footnote{Please see the proof of the proposition regarding 
the existence and the uniqueness of the solution.} on $\Bbar \times \R$ of the final boundary value problem
\begin{equation}
\L V_c =0 ~ \text{on } B \times \R; \qquad V_c = \psi_c ~\text{on } \pa B \times \R;
\qquad V_c =0 ~ \text{on } B \times (T_0, \infty),
\label{eq:Vcde}
\end{equation}
of the form
\[
V_c(x,t) = - f_{-1}(x) \delta( \alpha_{g,\omega}(x) -t) + v_c(x,t) H(\alpha_{g,\omega}(x)-t), \qquad (x,t) \in \Bbar \times \R,
\]
with $v_c \in C^2(\Bbar \times \R)$. Then $w_c = U_c + V_c$ is in $C^2(\Bbar \times \R)$ and $\L w_c =0$ 
on $\Bbar \times \R$. Further,
\[
w_c(x,t) = \begin{cases} u_c(x,t), & \text{for } x \in \Bbar, ~ t \geq \alpha_{g,\omega}(x) \\ v_c(x,t), & 
\text{for } x \in \Bbar, ~ t \leq \alpha_{g,\omega}(x) \end{cases}.
\]
\item Let $U_{A,\omega}$ be the solution of the IVP \eqref{eq:UAde}, \eqref{eq:UAic},  and 
$u_{A,\omega}, f_{i}$, $i=0, \cdots, p$ be the smooth functions 
guaranteed by Proposition \ref{prop:ualpha}. Define the distribution
\[
\psi_{A,\omega}(x,t)=\sum_{i=0}^p (-1)^i f_i(x) \, K_i(\alpha_{g,\omega}(x) -t),  
\qquad (x,t) \in \pa B \times \R.
\]
Let $V_{A,\omega}(x,t)$ be the unique solution on $\Bbar \times \R$ of the final boundary value problem
\begin{equation}
\L V_{A,\omega} =0 ~ \text{on } B \times \R; \qquad V_{A,\omega} = \psi_{A,\omega}~\text{on } \pa B \times \R;
\qquad V_{A,\omega} =0 ~ \text{on } B \times (T_0, \infty).
\label{eq:VAde}
\end{equation}
of the form
\[
V_{A,\omega}(x,t) = v_{A,\omega}(x,t) H(\alpha_{g,\omega}(x)-t),
\qquad (x,t) \in B \times \R
\]
with $v_{A,\omega} \in C^2(\Bbar \times \R)$.
Then $w_{A,\omega} = U_{A,\omega} + V_{A.\omega}$ is in $C^2(\Bbar \times \R)$ and 
$\L w_{A,\omega}=0$ on $\Bbar \times \R$. Further
\[
w_{A,\omega}(x,t) = \begin{cases} u_{A,\omega}(x,t), & t \geq \alpha_{g,\omega}(x), ~ x \in \Bbar \\ 
v_{A,\omega}(x,t), & t \leq \alpha_{g,\omega}(x), ~ x \in \Bbar \end{cases}.
\]
\end{enumerate}
\end{prop}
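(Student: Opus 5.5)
The plan is to build $V_c$ explicitly as a reflected partial progressing wave expansion plus a correction, and to read off uniqueness from energy estimates. I treat part (a); part (b) is the same argument with the $\delta$ term dropped. I use the structure given by Proposition \ref{prop:ualpha}. On $D_{g,\omega}\times\R$, which contains $\Bbar\times\R$ by Proposition \ref{prop:exinj}, we have
\[
U_c = f_{-1}\,\delta(t-\alpha) + \sum_{i=0}^p f_i K_i(t-\alpha) + r,
\]
with $\alpha=\alpha_{g,\omega}$. Here the $f_i$ solve the usual transport equations along $\T_{g,\omega}$, and $r$ is supported in $\{t\geq\alpha\}$ with Sobolev regularity increasing with $p$. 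Define the reflected partial sum
\[
V^0 := -f_{-1}\,\delta(\alpha-t) + \sum_{i=0}^p (-1)^i f_i K_i(\alpha-t).
\]
Two elementary identities drive the argument: $\delta(s)=\delta(-s)$, and $K_i(s)-(-1)^iK_i(-s)=s^i/i!$ for $i\ge 0$. Consequently the singular parts cancel, and
\[
U_c - r + V^0 = S := \sum_{i=0}^p f_i\,\frac{(t-\alpha)^i}{i!},
\]
which is smooth on $\Bbar\times\R$.

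Next I compute the source produced by $V^0$. Since $\L U_c=0$, we get $\L V^0 = \L S + \L r =: G$. Because $V^0$ is supported in $\{t\le\alpha\}$ and $r$ in $\{t\ge\alpha\}$, $G$ equals $\L S$ on $t<\alpha$ and vanishes on $t>\alpha$. The smooth function $\L S$ vanishes to order about $p-1$ on $t=\alpha$, by the same transport equations. Hence $G\in H^{s}_{loc}$ with $s$ roughly $p-1$.

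On $\Bbar$, EIC gives $\alpha\le T_1<T_0$, so $V^0=0$ and $G=0$ for $t>T_1$. I then solve the backward Dirichlet problem
\[
\L R = -G \ \text{on } B\times\R,\qquad R|_{\pa B\times\R}=0,\qquad R=0 \ \text{for } t>T_0.
\]
Standard energy estimates for the time-reversed IBVP, differentiated in $t$ and combined with elliptic regularity in $x$, give $R\in H^{s}(B\times(-T,T_0))$ for each $T$. Since $p>(n+1)/2+1$, Sobolev embedding in $n+1$ variables should give $R\in C^2(\Bbar\times\R)$. I set $V_c:=V^0+R$; it satisfies \eqref{eq:Vcde} because $R$ has zero trace and $V^0|_{\pa B\times\R}=\psi_c$.

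The main obstacle is showing $R=0$ on $\{t>\alpha\}$. This is what gives both the stated form of $V_c$ and the formula $w_c=u_c$ for $t\ge\alpha$. Here $\{t=\alpha\}$ is characteristic, since $\|\nabla_g\alpha\|=1$. I would run a domain-of-dependence energy estimate on the region $\{t>\alpha\}\cap(\Bbar\times\R)$, integrating backward from $t>T_0$ where $R=0$. The energy flux through the characteristic surface $t=\alpha$ has a sign, because $\|\nabla_g\alpha\|=1$ makes the boundary form nonnegative. The flux through $\pa B$ vanishes by the Dirichlet condition, and the source is zero there. I expect the corner where $t=\alpha$ meets $\pa B$, and the limited regularity of $R$, to need care; I would handle them by approximating with smooth data.

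Given this, $v_c := V^0+R$ restricted to $t\le\alpha$ equals $S-r+R$ there, which is $C^2$ up to $t=\alpha$. Then $w_c=U_c+V_c=S+r+R$ is $C^2$ on $\Bbar\times\R$ and solves $\L w_c=0$. The piecewise formula follows because $V_c=0$ on $t>\alpha$ and $U_c$ equals $u_c$ there, while on $t<\alpha$ we have $U_c=0$. Uniqueness within the stated class follows since the difference of two such $V_c$ lies in $C^2$ and solves the homogeneous backward IBVP with zero final data, so it vanishes.
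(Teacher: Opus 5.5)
Your existence construction is the paper's. Your $V^0$ is its $V_p$, followed by a backward Dirichlet correction $R$ and a characteristic energy estimate on $\{t\ge\alpha\}$ to get $R=0$ there; the flux through $\Gamma$ is indeed $|\nabla_\Gamma R|^2\ge 0$. Computing $\L V^0$ through $S$ is a harmless variant. However, the regularity bookkeeping does not close, and the hypothesis $p>(n+1)/2+1$ is tuned exactly to it. By the transport equations for $f_1,\dots,f_p$, one has exactly $\L S=-(\calE f_p)\,(t-\alpha)^p/p!$. So $\L S$ vanishes to order $p$, not $p-1$, on $\Gamma$, and $G=(-1)^{p+1}(\calE f_p)\,K_p(\alpha-t)\in H^p_{loc}$. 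The backward Dirichlet problem then gains one derivative; your $t$-differentiated energy estimates plus elliptic regularity give this, and compatibility is trivial because $G=0$ near $t=T_0$. Hence $R\in H^{p+1}_{loc}$, and $H^{p+1}\subset C^2$ in $n+1$ variables holds precisely when $p>(n+1)/2+1$. With your numbers ($G$ and $R$ in $H^{p-1}$), Sobolev embedding gives only $C^0$, so you have not established that $v_c$ and $w_c$ are $C^2$. A small slip as well: the identity should read $K_i(s)+(-1)^iK_i(-s)=s^i/i!$. With a minus sign it fails for $s<0$, although your conclusion $U_c-r+V^0=S$ is correct.

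The uniqueness step is wrong as stated. The difference of two solutions of the given form is $\vbar\,H(\alpha-t)$ with $\vbar\in C^2(\Bbar\times\R)$. This jumps across $\Gamma$ unless $\vbar|_\Gamma=0$, so it is not in $C^2$ and your argument does not apply. Because $\Gamma$ is characteristic, an energy argument on $\{t\le\alpha\}$ needs exactly this trace, so you must prove it vanishes. The paper does this by taking the $\delta(\alpha-t)$ component of $\L(\vbar H(\alpha-t))=0$; the $\delta'$ terms cancel by the eikonal equation. This gives $\bigl(2\pa_t+2a_{ij}\pa_i\alpha\,\pa_j+(\calE-c)\alpha\bigr)\vbar=0$ on $\Gamma$, which is a homogeneous linear ODE along each geodesic $\gamma_{q,g,\omega}$, $q\in P_\omega$. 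Since $\vbar(x,\alpha(x))=0$ for $x\in\pa B$, where these geodesics enter $\Bbar$, you get $\vbar|_\Gamma=0$. An energy estimate on $\{t\le\alpha\}$, with zero data on $\Gamma$ and on the lateral boundary, then gives $\vbar=0$.
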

\vspc
\noindent
Remarks:
\vspc
\begin{itemize}
\item As seen in the proof of Proposition \ref{prop:complementary}, the construction of $V_c$ and $V_{A,\omega}$ requires 
knowing $U_c$ and $U_{A,\omega}$ only on the region $\pa B \times (-2, T_0)$.
\item The $V_c, V_{A,\omega}$ are constructed by solving a final time IBVP and generate data for the inverse problem.
One may object to this because backward in time solutions are not `physical'. However $V_c(x,-t)$ and
$V_{A,\omega}(x,-t)$ are solutions of a `forward' IBVP and these are physical, so there is no issue.
\item In 2018, Matti Lassas communicated a construction of a $V_c$ similar to the one we use. 
The support property for the $V_c, V_{A,\omega}$ is new and crucial for our proofs.
\end{itemize}

\vspc
\noindent
We use the following surfaces and regions. Suppose $(g,\omega, T_0)$ satisfies the EIC and $T>T_0$.
Define
\begin{gather*}
\Sigma := \pa B \times [-T, T],
\qquad
\Sigma_+ := \Sigma  \cap \{ t \geq \alpha_{g,\omega}(x) \},
\qquad
\Sigma_- := \Sigma  \cap \{ t \leq \alpha_{g,\omega}(x) \},
\\
Q := \Bbar \times [-T,T], \qquad Q_+ = Q \cap \{ t \geq \alpha_{g,\omega}(x) \}, \qquad 
Q_- := Q \cap \{ t \leq \alpha_{g,\omega}(x) \},
\\
C := \{ (x, \alpha_{g,\omega}(x)) : x \in \pa B \} = \Sigma_+ \cap \Sigma_-, \qquad 
\Gamma := \{ (x, \alpha_{g,\omega}(x)) : x \in \Bbar \} = Q_+ \cap Q_-,
\\
H_{\pm} := \{ (x, \pm T): x \in \Bbar \}.
\end{gather*}
In the notation, we have suppressed the dependence on $T,\omega$ and $g$, as it will be clear from the context 
which $T,\omega,g$ are being used. 
For a function $w$ on a manifold $M$ and any non-negative integer $k$, $\|w\|_{k,M}$ will stand for one of the equivalent 
definitions of the Sobolev norm $\|w\|_{H^k(M)}$.

Suppose $\L$ is admissible, $(g=A^{-1},\omega,T_0)$ satisfies the EIC and $T \geq T_0$.
Define the coefficient to data forward maps 
\[
\G : c \to [U_c|_{\Sigma}, \pa_\nu v_c|_{\Sigma_-}],
\qquad 
\G: A \to [U_{A,\omega}|_\Sigma, \pa_\nu v_{A,\omega}|_{\Sigma_-}]_{\omega \in \Omega},
\qquad
\]
for some finite set $\Omega$ of unit vectors in $\R^n$.
Our goal is to study the injectivity and the stability of $ \G$. We assumed a known fixed $A,b$ when defining $\G(c)$,
and a known fixed $b,c$ when defining $\G(A)$.
We have the following uniqueness result for the $c$ problem.
%%%%%%%%
%
\begin{theorem}[Uniqueness for the $c$ problem]\label{thm:cunique}
Suppose $\L:= \pa_t^2 - a_{ij} \pa_i \pa_j - b_i \pa_i - c$ is admissible,
$\omega$ a unit vector in $\R^n$, $T_0>0$, and the metric $g=A^{-1}$ has the following properties:
\vspc
\begin{enumerate}[(a)]
\item there is a smooth strictly convex (w.r.t $g$) function $\kappa : \Bbar \to \R$,  with no 
critical points;
\item $(g,\omega,T_0)$ satisfies the EIC.
\end{enumerate}  
\vspc
If $c' \in C_c^\infty(B)$ and $T$ is large enough\footnote{We need 
$T > T_{*,g,\omega}$ where $T_{*,g,\omega}$ is the 
number guaranteed by Proposition \ref{prop:phiexistence}.} then
\[
 (U_{c} - U_{c'})|_\Sigma=0, \qquad (\pa_\nu v_c - \pa_\nu v_{c'})|_{\Sigma_-}=0
 \]
implies $c=c'$. Here $U_c, v_c, U_{c'}, v_{c'}$ are the solutions guaranteed by Propositions \ref{prop:uforward},
\ref{prop:complementary} for the operators $\L$ and $\L' = \pa_t^2 - a_{ij} \pa_i \pa_j - b_i \pa_i - c'$.
\end{theorem}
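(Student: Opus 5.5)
This is a Bukhgeim--Klibanov argument built on the solutions $w_c = U_c+V_c$ and $w_{c'}=U_{c'}+V_{c'}$ of Proposition \ref{prop:complementary}. The operators $\L$ and $\L'$ share $A$ and $b$, so they share $g$, $\alpha_{g,\omega}$ and $D_{g,\omega}$, and the EIC holds for both.

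\textbf{Step 1: reduce to the difference.} Set $q := c-c'$ and $w := w_c - w_{c'} \in C^2(Q)$. Then
\[
\L w = \L w_c - \L' w_{c'} - q\, w_{c'} = -q\, w_{c'}.
\]
I will work separately on $Q_\pm$, with $u := u_c - u_{c'}$ on $Q_+$ and $v := v_c - v_{c'}$ on $Q_-$.

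\textbf{Step 2: Cauchy data of $w$ on $\Sigma$.}
\begin{itemize}
\item The hypothesis $U_c=U_{c'}$ on $\Sigma$ gives $u=0$ on $\Sigma_+$.
\item Solving the exterior problem shows that the expansion coefficients $f_i$ on $\pa B$ coincide. Hence $\psi_c=\psi_{c'}$, so $v=0$ on $\Sigma_-$.
\item We are given $\pa_\nu v=0$ on $\Sigma_-$.
\item Unique continuation in the exterior $(\R^n\setminus B)\times(-\infty,T)$ gives $U_c=U_{c'}$ there, so $\pa_\nu u=0$ on $\Sigma_+$.
\item $V_c=V_{c'}=0$ for $t>T_0$, and for $t$ very negative $w_c=w_{c'}$, so $w$ vanishes near $H_\pm$.
\end{itemize}

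\textbf{Step 3: the transport identity on $\Gamma$.} Proposition \ref{prop:ualpha} gives transport equations for the traces of $u_c$ on $\Gamma$. Subtracting them should give
\[
2\T_{g,\omega}\bigl(u(x,\alpha(x))\bigr) + (\text{known lower order})\, u = q(x)\, f_{-1}(x),
\]
where $f_{-1}>0$ is the leading amplitude and is independent of $c$. Integrating along the geodesics $\gamma_{p,g,\omega}$, which foliate $\Bbar$ by the EIC, together with $u=0$ on $C$ (incoming side), yields
\[
\|q\|_{L^2(B)} \lesssim \|\T(u|_\Gamma)\|_{L^2} + \|u|_\Gamma\|_{L^2},
\]
and analogous relations for derivatives of $u$ and $v$ normal to $\Gamma$. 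Applying $\pa_t$ to $\L w = -q w_{c'}$, the function $\pa_t w$ satisfies a wave equation with source $q\,\pa_t w_{c'}$. The goal here is to express $q$ on $\Gamma$ through $\pa_t u$ or $\pa_t^2 u$ restricted to $\Gamma$.

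\textbf{Step 4: Carleman estimates with the convex weight.} Use the weight $\phi(x,t) = \kappa(x) - \beta(t-\alpha(x))^2$ or similar; I assume Proposition \ref{prop:phiexistence} supplies a pseudoconvex $\phi$ for $T>T_{*,g,\omega}$ that is large on $\Gamma$ and small near $H_\pm$. Apply Carleman estimates with boundary terms on $\Gamma$ separately to $\pa_t u$ in $Q_+$ and $\pa_t v$ in $Q_-$. The boundary terms on $\Sigma$ vanish by Step 2, and those near $H_\pm$ are exponentially small. This controls $\sigma\int_\Gamma e^{2\sigma\phi}|\pa_t w|^2$ by $\int_Q e^{2\sigma\phi}|q|^2$. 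Combining with the transport identity of Step 3, which gives $\int e^{2\sigma\phi}|q|^2 \lesssim \int_\Gamma e^{2\sigma\phi}(\ldots)$ up to $\sigma^{-1}$ factors, and noting that on $Q$ the weight is maximized on $\Gamma$ along each $x$, gives
\[
\sigma\int e^{2\sigma\phi(x,\alpha(x))} |q|^2 \le C\int e^{2\sigma\phi(x,\alpha(x))}|q|^2 .
\]
Letting $\sigma\to\infty$ forces $q=0$.

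\textbf{Main obstacle.} The hard step is the interface bookkeeping on $\Gamma$. One must match the two one-sided Carleman boundary terms on $\Gamma$ with the transport equation so that the principal trace term has a good sign. This needs $C^2$ matching of $u$ and $v$ across $\Gamma$, which is exactly why the complementary solution was built. It also requires verifying that $\Gamma$ is spacelike enough for $\phi$. I would try first to choose $\phi$ so that $\pa_t\phi$ changes sign across $\Gamma$, letting both boundary integrals carry the favorable sign.
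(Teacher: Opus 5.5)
Your set-up agrees with the paper. The difference $w=w_c-w_{c'}\in C^2(Q)$ has zero Cauchy data on $\Sigma$, and its trace $w|_\Gamma=f_0-f_0'$ satisfies $(2\T+(\calE-c)\alpha)(w|_\Gamma)=(c-c')f_{-1}$ with $f_{-1}>0$. The weight of Proposition \ref{prop:phiexistence} ($\eta(\sigma)\to0$, $\min_\Gamma\vphi>\vphi_H$) then closes the argument. There are two minor slips. First, $\L w=+(c-c')w_{c'}$. Second, $w$ need not vanish near $H_\pm$, since nothing forces $u_c=u_{c'}$ for $t>T_0$ or $v_c=v_{c'}$ for $t\ll0$; only the weight gap handles $H_\pm$.

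\textbf{The gap.} You need weighted control of the \emph{tangential} derivative $\T(w|_\Gamma)=(\pa_t w+\T w)|_\Gamma$, and the mechanism you sketch does not provide it. Differentiating in $t$ and reading $q$ off a trace of $\pa_t w$ or $\pa_t^2 w$ is the classical Bukhgeim--Klibanov device. It works on a spacelike surface $\{t=0\}$ carrying zero Cauchy data, where the equation gives $\pa_t^2 w=q\,w'$. Here none of that holds:
\begin{itemize}
\item $\Gamma$ is characteristic ($a_{ij}\pa_i\alpha\,\pa_j\alpha=1$), not ``spacelike enough''.
\item The trace $w|_\Gamma$ is itself the unknown $f_0-f_0'$.
\item $\pa_t w|_\Gamma=f_1-f_1'$ solves a transport equation with source $\calE(f_0-f_0')+(c-c')f_0'$, so there is no usable formula for $q$ in terms of $\pa_t w|_\Gamma$.
\end{itemize}
The interface plan also fails on its own terms. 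For a function that is $C^1$ across $\Gamma$ and a smooth weight, the $\Gamma$-boundary forms of the $Q_+$ and $Q_-$ Carleman identities are linear in the normal. They are therefore exact negatives of each other and cannot both have the favorable sign. Adding them just reproduces the global identity, which contains no $\Gamma$ term.

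\textbf{The missing ingredient} is Proposition \ref{prop:aux}. Since $w\in C^2(Q)$, there is no interface to manage. Apply the standard Carleman estimate on all of $Q$; it bounds $\sigma\|w\|^2_{1,\sigma,Q}$ with only $\Sigma$ and $H_\pm$ boundary terms. Then get the trace from a one-sided energy identity on $Q_+$ with multiplier $\pa_t f$, where $f=e^{\sigma\vphi}w$ (Lemma \ref{lemma:energy}). Because $\Gamma$ is characteristic, the flux through $\Gamma$ is exactly $|\nabla_\Gamma f|^2+\sigma^2|f|^2$ (see \eqref{eq:fgnorm}, \eqref{eq:divthm}). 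This involves tangential derivatives only and has a good sign. The interior terms $\int_{Q_+}|\L f|\,|f_t|+\sigma\int_{Q_+}(|\nabla_{x,t}f|^2+\sigma^2|f|^2)$ are controlled by the global Carleman estimate. The result is $\|w\|^2_{1,\sigma,\Gamma}\cleq\|c-c'\|^2_{0,\sigma,Q}+\sigma^2e^{2\sigma\vphi_H}\|c-c'\|^2_{0,Q}$ for $w$ itself, with no time differentiation. The transport identity bounds $\int_B e^{2\sigma\vphi(x,\alpha(x))}|c-c'|^2$ by the left side. Finally, $\eta(\sigma)\to0$ absorbs the right side, and no extra factor of $\sigma$ is needed on the left.
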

%%%%%%%%%%%%

It should be possible to modify our proof to obtain a Lipschitz stability result, using the exterior estimates on the region 
$(\R^n \setminus B) \times [-T,T]$ (where $\L = \L'= \Box$) as done in \cite{ms22}.
Conditions for the existence of a strictly convex function on a manifold $(\overline{B}, g)$ with strictly convex boundary (see (a) in Theorems \ref{thm:cunique}, \ref{thm:Aunique}) are discussed in \cite[Section 2]{paternain2019}. In particular, if $(\overline{B}, g)$ has nonpositive sectional curvature (or more generally no focal points), then the function $\kappa(x) = d_g(x,p)^2$ where $p$ is slightly outside $\overline{B}$ is strictly convex and has no critical points. Moreover, if $n=2$, then $(\overline{B}, g)$ admits a strictly convex function if and only if it is non-trapping in the sense that any geodesic reaches the boundary in finite time.

%%%%%%%%%% The $A$ problem %%%%%%%%%%%%%%

For our next result, we need a spanning condition, in addition to the 
EIC and the convexity condition required for Theorem \ref{thm:cunique}. 
\begin{defn}
A finite set $\{v_k\}_{k=1}^N$ of column vectors in $\R^n$  is said to have the {\em Spanning Condition} if
the span of  $ \{v_k \, v_k^T \}_{k=1}^N $ is the set of all $n \times n$ real symmetric matrices. 
\end{defn}

\vspc
\noindent
We now state our result for the $A$ problem; Propositions \ref{prop:eictransfer} is used implicitly in the statement of the result.
%%%%%%%%%%
\begin{theorem}[Uniqueness for the $A$ problem]\label{thm:Aunique}
Suppose $T_0>0$, $\L:= \pa_t^2 - a_{ij} \pa_i \pa_j - b_i \pa_i - c$, $\L':= \pa_t^2 - a_{ij}' \pa_i \pa_j - b_i \pa_i - c$ are 
admissible and let $A=(a_{ij}), A'= (a'_{ij})$.
Let $\Omega = \{ \omega_k \}_{k=1}^N$ be a set of unit vectors in $\R^n$ such that the metric $g=A^{-1}$ has the following properties:
\vspc
\begin{enumerate}[(a)]
\item there is a smooth strictly convex (w.r.t $g$) function $\kappa : \Bbar \to \R$ with no critical points;
\item $(g,\omega_k,T_0)$ satisfies the EIC for $k=1, \cdots, N$;
\item[(c)] $\{\nabla \alpha_{g,\omega_k}\}_{k=1}^N$ satisfies the Spanning Condition at each point of $\Bbar$. 
\end{enumerate}  
\vspc
If $T$ is large enough\footnote{ We need $T> 1 + T_0 + T_{*,g,\omega}$ for each $\omega \in \Omega$, where 
$T_{*, g, \omega}$ is the 
number guaranteed by Proposition \ref{prop:phiexistence}.} and 
\[
(U_{A',\omega_k} - U_{A,\omega_k})|_\Sigma =0,
\qquad (\pa_\nu v_{A',\omega_k} - \pa_\nu v_{A,\omega_k})|_{\Sigma_-} =0,
\]
for each $k=1, \cdots, N$, then $A'=A$.
\end{theorem}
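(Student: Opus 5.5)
We will prove Theorem \ref{thm:Aunique} with a Bukhgeim--Klibanov argument, as in \cite{rs20a, ms22}, but applied to the complementary sums $w_{A,\omega} = U_{A,\omega}+V_{A,\omega}$ of Proposition \ref{prop:complementary}.

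\textbf{Step 1: preliminary reductions.} Fix $\omega = \omega_k$. By Proposition \ref{prop:eictransfer}, $(g'=A'^{-1},\omega,T_0)$ satisfies the EIC, $D_{g,\omega}=D_{g',\omega}$, and $\alpha := \alpha_{g,\omega} = \alpha_{g',\omega}$ on $\R^n\setminus B$. The eikonal functions need not agree inside $B$, however. So, following Romanov \cite{romanov2002}, we will first show $\alpha_{g,\omega}=\alpha_{g',\omega}$ on $\Bbar$. From the exterior data, the one-sided traces $f_i$ and the boundary sources $\psi_{A,\omega} = \psi_{A',\omega}$ coincide. We compare the two solutions along the common characteristic surface, reparametrizing one of them by the diffeomorphism $\Phi_{g',\omega}\circ\Phi_{g,\omega}^{-1}$.

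\textbf{Step 2: the difference problem.} Set $w = w_{A,\omega}-w_{A',\omega}$. Then
\[
\L w = (a_{ij}-a'_{ij})\,\pa_i\pa_j w_{A',\omega} =: h_{ij}\, \pa_i\pa_j w_{A',\omega}
\]
in $Q$. The data hypotheses give $w=0$ and $\pa_\nu w=0$ on $\Sigma$: on $\Sigma_+$ this comes from $U$ and unique continuation in the exterior, and on $\Sigma_-$ from $v$ directly. Moreover $w = 0$ near $t=T$ by the support property of $V$ and finite speed of propagation. On the interface $\Gamma$, the one-sided trace $u_{A,\omega}(x,\alpha(x))$ is $f_0$. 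It satisfies a transport equation $2\T_{g,\omega} f_0 + (\calE\alpha) f_0 = 0$, which involves $a_{ij}\pa_i\pa_j\alpha$ and $a_{ij}\pa_i\alpha\pa_j$ by \eqref{eq:transportA}. Subtracting the two transport equations, the difference of traces $\phi = w|_\Gamma$ satisfies an equation whose source term is, to leading order, $h_{ij}\,\pa_i\alpha\,\pa_j\alpha$ times a nonvanishing factor, plus lower-order terms in $h$ and $\phi$. Integrating along the geodesics of $g$ from $\pa B$, where $\phi=0$, yields
\[
|h_{ij}\pa_i\alpha_{\omega}\pa_j\alpha_{\omega}|\cleq |\T \phi| + |\phi|,
\]
and $\phi,\T\phi$ are controlled by $w$ and $\pa_t w$ on $\Gamma$. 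Since $w$ is $C^2$ across $\Gamma$, the traces from $Q_\pm$ agree.

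\textbf{Step 3: Carleman estimate.} The weight will be $e^{\sigma\varphi}$ with $\varphi(x,t)=\kappa(x)-\beta (t-\alpha(x))^2$ or a similar pseudoconvex function. Its existence, for $T>T_{*,g,\omega}$, is provided by Proposition \ref{prop:phiexistence} using hypothesis (a), and $\varphi$ is chosen so that it is maximal on $\Gamma$ in the $t$-direction. Apply the Carleman estimate to $\pa_t w$ on $Q_\pm$. The boundary terms on $\Sigma$ vanish and those on $H_\pm$ are exponentially small. The interface term on $\Gamma$, via the standard energy identity, dominates $\sigma\int_\Gamma e^{2\sigma\varphi}(|\phi|^2+|\nabla\phi|^2)$. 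The right side is bounded by $\int_Q e^{2\sigma\varphi}|h|^2$, since $\pa_t^m w_{A',\omega}$ is bounded. Combining Step 2 over all $k$ with the Spanning Condition (c), which makes the linear map $h\mapsto (h\nabla\alpha_k\cdot\nabla\alpha_k)_k$ uniformly injective on symmetric matrices over $\Bbar$, gives
\[
\int_B e^{2\sigma\varphi(x,\alpha(x))}|h|^2 \cleq \sigma^{-1}\int_Q e^{2\sigma\varphi}|h|^2 .
\]
Because $\varphi(x,t)\le\varphi(x,\alpha(x))$, the right side is at most $\sigma^{-1}$ times the left side, so $h=0$ for large $\sigma$.

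\textbf{Main obstacle.} The hardest part is Step 1 together with the $h$-dependence of $\alpha$ inside $B$. The functions $\alpha_{g,\omega}$ and $\alpha_{g',\omega}$ differ, so $\Gamma$ differs for the two operators, and $w$ is not $C^2$ across either interface. I would first try Romanov's trick: write $\alpha'-\alpha$ via the eikonal equations, $A\nabla\alpha\cdot\nabla\alpha - A'\nabla\alpha'\cdot\nabla\alpha' = 0$. This gives a transport equation for $\alpha'-\alpha$ with source $h\nabla\alpha\cdot\nabla\alpha$, so $|\alpha'-\alpha|$ and its derivatives are bounded by integrals of $|h|$ along geodesics. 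These integrals are then absorbed into the same Carleman inequality, using the condition $T>1+T_0+T_*$ to handle the shift in time.
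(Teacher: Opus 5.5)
\textbf{There is a genuine gap, and it sits in your Step 2, the core of the argument.}

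Subtracting the two transport equations $2(\nabla\alpha)^TA\nabla f_0+((\calE-c)\alpha)f_0=0$ and $2(\nabla\alpha')^TA'\nabla f_0'+((\calE'-c)\alpha')f_0'=0$ does not give a source that is a nonvanishing multiple of $h_{ij}\pa_i\alpha\,\pa_j\alpha$ plus lower-order terms. The unknown enters mainly through $\alphabar=\alpha-\alpha'$, and the paper's computation gives
\[
\bigl(2(\nabla\alpha)^TA\nabla+(\calE-c)\alpha\bigr)(f_0-f_0') = -\calP\alphabar - 2(\nabla\alpha')^T\Abar\,\nabla f_0' - (\abar_{ij}\pa_i\pa_j\alpha')f_0',
\qquad \calP=f_0'(\calE-c)+2(\nabla f_0')^TA\nabla .
\]
This is a second order elliptic operator acting on $\alphabar$, plus terms in $\Abar=h$ of the same order as $h\nabla\alpha\cdot\nabla\alpha$. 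The quantity $h\nabla\alpha\cdot\nabla\alpha$ appears only through the eikonal equations, as $-(\nabla\alphabar)^TA'(\nabla\alpha+\nabla\alpha')$. So the pointwise bound $|h\nabla\alpha\cdot\nabla\alpha|\cleq|\T\phi|+|\phi|$ is not available.

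Your fallback also fails. Integrating the eikonal transport equation for $\alphabar$ along curves loses derivatives. The interface equation, and $\L\wbar$ after a time shift, contain $\pa^2\alphabar$. A transport equation with source $h\nabla\alpha\cdot\nabla\alpha$ controls transverse derivatives of $\alphabar$ only through derivatives of $h$.

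Step 1 does not work either.
\begin{itemize}
\item Proving $\alpha=\alpha'$ in $\Bbar$ first is circular: $\alphabar_k=0$ for all $k$ together with (c) already forces $A=A'$.
\item Reparametrizing in $x$ by $\Phi_{g',\omega}\circ\Phi_{g,\omega}^{-1}$ changes $\L'$ and destroys the comparison.
\item The unshifted $w=w_A-w_{A'}$ has trace $f_0-w_{A'}(x,\alpha(x))$ on $\Gamma$, not $f_0-f_0'$. (It is in fact $C^2$, contrary to your remark, since the EIC transfers to $g'$.)
\item $w$ need not vanish near $t=T$; only $V$ does.
\end{itemize}

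\textbf{The missing idea is the paper's combination of Romanov's time shift with an elliptic Carleman estimate in $\alphabar_k$.} Set $\wbar_k(x,t)=w_k(x,t)-w_k'(x,t-\alphabar_k(x))$. This function has zero Cauchy data on $\Sigma$ (because $\alphabar_k=0$ on $\pa B$), trace $f_{0,k}-f_{0,k}'$ on $\Gamma_k$, and satisfies $|\L\wbar_k|\cleq|\Abar|_\infty+|\nabla\alphabar_k|+|(\calE-c)\alphabar_k|$.

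Proposition \ref{prop:aux} then bounds $\|\calP_k\alphabar_k\|_{0,\sigma,\Gamma}$ by $\Abar$ plus $Q$-norms of $\alphabar_k$ up to second order. Next, apply an elliptic Carleman estimate for $\calP_k$ with weight $\mu(x)=\vphi_k(x,\alpha_k(x))=e^{\lambda(\|\kappa\|_\infty+\kappa(x))}$. This trace is independent of $k$, which is what lets you combine the $N$ directions. Together with $\sigma\eta(\sigma)\to0$, needed to absorb the $\pa^2\alphabar_k$ terms against the $\sigma^{-1}\|\pa^2\alphabar_k\|^2$ gain, this yields
\[
\sigma\int_B e^{2\sigma\mu}(|\nabla\alphabar_k|^2+\sigma^2|\alphabar_k|^2)\cleq\int_B e^{2\sigma\mu}|\Abar|_\infty^2 .
\]
Only then does the spanning condition enter, through the exact identity
\[
(\nabla\alpha_k)^T\Abar\nabla\alpha_k=-(\nabla\alphabar_k)^TA'\nabla\alpha_k-(\nabla\alpha_k')^TA'\nabla\alphabar_k .
\]
This gives $|\Abar|_\infty\cleq\sum_k(|\nabla\alphabar_k|+|\alphabar_k|)$ pointwise, and hence $\sigma\int_B e^{2\sigma\mu}|\Abar|_\infty^2\cleq\int_B e^{2\sigma\mu}|\Abar|_\infty^2$, so $\Abar=0$ for large $\sigma$.
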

\vspc
Our proof, combined with an exterior estimate (as in \cite{ms22}) and the additional assumption that
$(g=A^{-1}, T_2, \omega_k)$ and $(g'=A'^{-1}, T_2, \omega_k)$ satisfy the EIC for some $T_2>T$, should give a 
Lipschitz stability result. For the linearized problem, but for the recovery of the velocity (so $A = \rho^{-1}I$), there is an instability result in \cite{stefanov2013}. However, it does not contradict our stability claim.
As suggested in the proof of Theorem \ref{thm:Aunique}, for the stability result, the
boundary data for $\L$ and $\L'$ are compared at the same $x$ but with a shift in $t$ in the data for $\L'$,
whereas in \cite{stefanov2013} the boundary data for $\L,\L'$ are compared at the same $x,t$. Our approach to 
measuring the difference in the data seems appropriate, as a 
small change in the velocity can result in the same wave arriving at the boundary at a slightly later time and,
even for small $\tau$,  $\| f(\cdot) - f(\cdot - \tau)\|_{L^2}$ can be of the same order as $\|f(\cdot)\|_{L^2}$, if $f$ has high 
frequency components.

Here are some remarks about the hypotheses of Theorem \ref{thm:Aunique}. 
\vspc
\begin{itemize}
\item Note that $A'$ is not required to satisfy any of the conditions (a), (b), (c) of Theorem \ref{thm:Aunique}. 
\item Condition (c) is motivated by the case where $A=I$ (hence $g=I$) and 
\[
\Omega = \{e_i : i=1, \cdots, n\} \cup \{(e_i + e_j)/\sqrt{2} : i,j=1, \cdots,n, ~ i \neq j\}.
\]
Then, for each $\omega \in \Omega$, $\alpha_{\omega} = x \cdot \omega$ so 
$\nabla \alpha_{\omega} = \omega$, and one can check that the set $\Omega$ satisfies the Spanning Condition.
\item In \cite{ors26d}, we provide sufficient conditions under which the (a), (b), (c) in 
Theorem \ref{thm:Aunique} are satisfied. In particular, these conditions hold if 
$\| g - g_{\mathrm{Eucl}}\|_{C^3(\overline{B})}$ is sufficiently small.
\item To simplify notation, we use the symbol $\alpha_k$ for $\alpha_{g,\omega_k}$.
One can show (see the proof of Theorem \ref{thm:Aunique}) that condition (c) of Theorem \ref{thm:Aunique}
implies
\begin{equation}
\sum_{k=1}^N | (\nabla \alpha_k)^T (A'-A) (\nabla \alpha_k)(x)|^2 \cgeq \|(A'-A)(x)\|^2_\infty,
\qquad x \in \Bbar,
\label{eq:alphaAAp}
\end{equation}
for any two real symmetric matrix functions $A(x),A'(x)$, with the constant independent of $x,A(x),A'(x)$. 
This is the only implication of condition (c) that plays a role in the proof of Theorem \ref{thm:Aunique}.
Here 
\[
P(x) \cgeq Q(x) \qquad x \in \Bbar
\]
means 
\[
P(x) \geq C Q(x), \qquad x \in \Bbar
\]
for some constant $C$ independent of $x \in \Bbar$.
\item If we drop the requirement (c) from Theorem \ref{thm:Aunique}, we no longer have a result for arbitrary $A'$ but a 
more restricted result is still true. A minor modification of the proof of Theorem \ref{thm:Aunique} shows
that if $\Omega=\{\omega\}$ for some unit vector $\omega$ and conditions (a), (b) hold and we do not require (c),
then $\G(A') = \G(A)$ implies $A'=A$ for any $A'$ of the form
\[
A'(x) := A(x)+ \mu(x) B(x), \qquad x \in \Bbar
\]
where $\mu$ is a smooth real valued function on $\Bbar$, and $B(x)$ is a smooth positive definite matrix valued
function on $\Bbar$. The modification in the proof needed is the observation
\begin{align*}
| (\nabla \alpha)^T (A'-A) (\nabla \alpha)(x)| & = |\mu(x)|  \, | (\nabla \alpha)^T B (\nabla \alpha)(x) |
\cgeq |\mu(x)| \, |\nabla \alpha(x)|^2 \cgeq |\mu(x)|
\\
& \cgeq \| (A'-A)(x)\|_\infty, \qquad x \in \Bbar,
\end{align*}
with the constant independent of $x$. In particular, taking $A(x)= \rho(x)^{-1} I$, $A'(x) = \rho'(x)^{-1}I$ for positive smooth functions $\rho, \rho'$, gives a uniqueness result for the $\rho$ recovery problem for the operator $\rho \pa_t^2 - \Delta$,
provided $\Omega=\{\omega\}$ for some unit vector $\omega$ and the metric $\rho I$ satisfes (a), (b) for this $\Omega$
- condition (c) is not needed.
\end{itemize}

%%%%%%%%%
%%%%%%%%%%
Our recent articles \cite{ors26a, ors26b} have a comprehensive review of the literature associated with the problems 
studied in this article. Since the publication of our articles, the articles \cite{ali2025, ak2025} have appeared 
and address the two formally determined inverse problems - the $c$ recovery problem for the operator $\Box_g + c$ and
the $g$ recovery problem for the operator $\Box_g$. \cite{ali2025} obtains results for the one space dimensional case 
and \cite{ak2025} obtains results for the multidimensional case.
The problems are different from ours in that the forward problem
is an IBVP on the infinite cylinder $D \times [0,\infty)$ for some smooth bounded domain $D$ in $\R^n$, with Dirichlet boundary condition, and the source being the initial value which may be (surprisingly) an {\bf unknown} function
but whose eigenfunction expansion (in terms of the Dirichlet eigenfunctions of $\Delta_g$ on $D$) contains all the 
components, and the data is the value of the solution on $\Omega \times [0,\infty)$ for some open subset 
$\Omega$ of $D$.
%%%%%%
%%%%%%%

For a real $\sigma$, a submanifold $M$ of $\R^n \times \R$, and a weight $\vphi \in C^\infty(M)$, we define the following norms of  smooth enough functions $f: M \to \R$ :
\[
\| f \|_{0,M,\sigma} := \left ( \int_M e^{2 \sigma \vphi} |f|^2 \, dS \right )^{1/2},
\qquad
\| f \|_{1,M,\sigma} := \left ( \int_M e^{2 \sigma \vphi} ( |\nabla_M f|^2 + \sigma^2 |f|^2 ) \, dS \right )^{1/2},
\]
where $\nabla_M$ represents a finite set of vector fields on $M$ spanning its tangent bundle. Note that choosing different 
$\nabla_M$ results in equivalent norms, if $M$ is compact. We also define (note the missing $\sigma$ and $\vphi$)
\[
\| f \|_{0,M} := \left ( \int_M |f|^2 \, dS \right )^{1/2},
\qquad
\| f \|_{1,M} := \left ( \int_M ( |\nabla_M f|^2 + |f|^2 ) \, dS \right )^{1/2}.
\]
For functions $f_1, f_2, \cdots, f_m$ on $M$ and $k=0,1$, we define
\[
\|[f_1, \cdots, f_m]\|_{k,\sigma,M} := \sum_{i=1}^m \|f_i\|_{k,\sigma,M},
\qquad
\|[f_1, \cdots, f_m]\|_{k,M} := \sum_{i=1}^m \|f_i\|_{k,M}.
\]

The following proposition is crucial in the proofs of both the theorems in this article.
\begin{prop}\label{prop:aux} 
Suppose $\L := \pa_t^2 - a_{ij}\pa_i \pa_j - b_i \pa_i - c$ is an admissible operator on 
$\R^n \times \R$, $(g=A^{-1},\omega,T_0)$ satisfies the EIC, $T>T_0$, and $\vphi(x,t)$ is a smooth
strongly pseudoconvex\footnote{See the appendix of \cite{rs20a} for the definition.} 
(w.r.t $\L$) function on $Q$ with
\[
\min_\Gamma \vphi -  \max_{H_{\pm}} \vphi \geq 2 \delta,
\]
for some $\delta>0$.
There are $\sigma_0>0, k>0$ such that, for all $w \in C^2(Q)$ and all $\sigma \geq \sigma_0$, we have
\begin{align}
\| w\|^2_{1,\sigma,\Gamma} \cleq \| \L w \|^2_{0,\sigma, Q}  + \sigma^2 e^{2 \sigma \vphi_H}
\|\L w\|^2_{0,Q} +
e^{k \sigma} ( \|w\|^2_{1, \Sigma} +  \| \pa_\nu w \|^2_{0, \Sigma});
\label{eq:auxest}
\end{align}
here $\vphi_H := \max_{H_{\pm}} \vphi$. 
The quantities $\sigma_0, k$ and the inequality constant are independent of $w$ and $\sigma$.
%  and determined by any upper bound on $\|A\|_{C^2}$, $\|b\|_{C^2}$, $\|c\|_{C^2}$, 
% $\|\vphi\|_{C^2(Q)}$, $\delta$, $T$ and $\|\alpha_{g,\omega}\|_{C^1(B)}$.
\end{prop}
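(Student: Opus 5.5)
We plan to follow the Bukhgeim--Klibanov style argument as in \cite{rs20a}, \cite{ms22}. We split $Q$ into $Q_+$ and $Q_-$ and apply, on each piece, a Carleman estimate with boundary terms for the strongly pseudoconvex weight $\vphi$. This is the estimate in the appendix of \cite{rs20a}. For $w \in C^2(Q_\pm)$ and $\sigma$ large it has the form
\[
\sigma \int_{Q_\pm} e^{2\sigma\vphi}(|\nabla_{x,t} w|^2 + \sigma^2 |w|^2) + \text{(boundary terms)} \cleq \int_{Q_\pm} e^{2\sigma\vphi} |\L w|^2 .
\]
The boundary terms are integrals over $\pa Q_\pm$, which consists of pieces of $\Sigma$, of $H_\pm$, and of $\Gamma$. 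We write the boundary form via the usual energy identity, $2(\pa_t w - \dots)\L w = \mathrm{div}(\dots) + \dots$, applied to $e^{\sigma\vphi}w$.

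The key point concerns $\Gamma$, which is the surface $t=\alpha_{g,\omega}(x)$. Since $\|\nabla_g\alpha_{g,\omega}\|=1$ (Proposition \ref{prop:exinj}), $\Gamma$ is characteristic for $\L$. The boundary quadratic form on a characteristic surface is not coercive in all first derivatives. It controls, with a definite sign after the appropriate choice of side, only the tangential derivatives of $w|_\Gamma$ and $\sigma^2|w|^2$, i.e.\ exactly $\|w\|^2_{1,\sigma,\Gamma}$. Normal derivatives drop out because the normal $(\nabla\alpha,-1)$ is null. We intend to verify this by computing the flux term $e^{2\sigma\vphi}[\dots]\cdot N$ on $\Gamma$: the principal part is $(\T_{g,\omega}v - \pa_t v)^2$-type plus $\sigma^2(\text{pseudoconvexity-derived positive term})|v|^2$ for $v=e^{\sigma\vphi}w$. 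Since $\T_{g,\omega}-\pa_t$ is tangent to $\Gamma$, and the remaining tangential derivatives are handled similarly, we obtain the full tangential $H^1$ norm. This is essentially the reason one uses only one of $Q_\pm$, or adds the two. Adding the estimates on $Q_+$ and $Q_-$ makes the $\Gamma$ terms appear with a combined sign. We will arrange, possibly using only $Q_-$ or only $Q_+$ depending on the sign of $\pa_t\vphi$ relative to the normal, that the $\Gamma$ contribution sits on the good side. We expect this sign and positivity bookkeeping on the characteristic surface to be the main obstacle. We would first try the computation for $\vphi$ with $\pa_t\vphi \neq 0$ near $\Gamma$ and use $\sigma$ large to absorb lower order terms via the interior integral.

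The remaining boundary terms are handled as follows. On $\Sigma$ the terms are bounded by $e^{k\sigma}(\|w\|^2_{1,\Sigma}+\|\pa_\nu w\|^2_{0,\Sigma})$ with $k=2\max_Q\vphi$ plus a margin for polynomial factors of $\sigma$. On $H_\pm$ the weight is at most $e^{2\sigma\vphi_H}$, but the terms involve $\|w\|_{1,H_\pm}$, which are not data. We control these by energy estimates for the hyperbolic equation on $Q$. The energy $E(t)$ of $w$ at time $t$ satisfies
\[
E(\pm T) \cleq E(t_0) + \|\L w\|^2_{0,Q} + \|w\|^2_{1,\Sigma}+\|\pa_\nu w\|^2_{0,\Sigma}.
\]
Averaging over $t_0$ in a region where $\vphi \geq \vphi_H + \delta$, which exists near $\Gamma$ by the hypothesis $\min_\Gamma\vphi - \vphi_H \geq 2\delta$ and continuity, bounds $E(t_0)$ by $e^{-2\sigma(\vphi_H+\delta)}$ times the weighted interior term, which appears with factor $\sigma$ on the left. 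Hence
\[
\sigma^{m} e^{2\sigma\vphi_H} E(\pm T) \cleq e^{-2\sigma\delta}\sigma^{m}\cdot(\text{weighted interior}) + \sigma^2 e^{2\sigma\vphi_H}\|\L w\|^2_{0,Q} + e^{k\sigma}(\text{data}).
\]
For $\sigma\geq\sigma_0$ the first term is absorbed into the left side, since $\sigma^m e^{-2\sigma\delta}\ll \sigma$. The term $\sigma^2 e^{2\sigma\vphi_H}\|\L w\|^2_{0,Q}$ is exactly the extra term in \eqref{eq:auxest}. Combining yields \eqref{eq:auxest}. The constants depend only on the coefficients, $\vphi$, $\delta$, $T$ and $\alpha_{g,\omega}$, and $w$ needs only be $C^2$ up to $\Gamma$ on each side, which holds since $w \in C^2(Q)$.
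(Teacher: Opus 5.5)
There is a genuine gap at the step you flag yourself: $\|w\|^2_{1,\sigma,\Gamma}$ cannot be obtained as a signed boundary term of a Carleman estimate on $Q_+$ or $Q_-$. With $v=e^{\sigma\vphi}w$, the principal part of the Carleman boundary form on a hypersurface with conormal $N$ has two pieces. The first is $\sigma$ times the energy--momentum flux $2(Xv)(Nv)-\la X,N\ra|dv|^2$, where $X$ is the gradient of $\vphi$ for $-dt^2+g$. The second is a term $\sigma^3\la X,N\ra|d\vphi|^2|v|^2$. On the null surface $\Gamma$ the transversal derivative does drop out. However, the remaining quadratic form in the tangential derivatives is definite only when $X$ is timelike, and for spacelike $X$ it is strictly indefinite. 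The hypotheses give no such sign. Worse, for the weights of Proposition \ref{prop:phiexistence} one has $\pa_t\vphi=0$ and $\nabla_x\vphi=\lambda\vphi\nabla\kappa\neq0$ on $\Gamma$, so $X$ is spacelike there. For example, in the Euclidean case with $\alpha=x_n$ and $N=\pa_t+\pa_{x_n}$, the first order part is a positive multiple of $\tfrac12\pa_n\kappa\,((Nv)^2-\sum_{j<n}(\pa_jv)^2)+\sum_{j<n}\pa_j\kappa\,(\pa_jv)(Nv)$, which is indefinite. Switching from $Q_+$ to $Q_-$ only flips the overall sign. The $\sigma^3$ term has the sign of $\pm\T_{g,\omega}\kappa$, which is not controlled and in general changes over $\Bbar$. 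Adding the two underlying identities makes the $\Gamma$ terms cancel exactly (since $w\in C^2(Q)$), which just gives the Carleman estimate on $Q$ with no information on $\Gamma$. (Incidentally, the null generator tangent to $\Gamma$ is $\pa_t+\T_{g,\omega}$, not $\T_{g,\omega}-\pa_t$.)

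The missing idea is to decouple the trace from the Carleman estimate. The paper applies the Carleman estimate of \cite[Theorem A.7]{rs20a} on all of $Q$, which has boundary terms only on $\Sigma$ and $H_\pm$. It then obtains the trace from a separate energy identity for $f=e^{\sigma\vphi}w$ on $Q_+$ with the \emph{timelike} multiplier $\pa_tf$. Because $\Gamma$ is characteristic, the flux of this energy through $\Gamma$ is exactly $|\nabla_\Gamma f|^2+\sigma^2|f|^2$ (see \eqref{eq:fgnorm}), which is coercive. Averaging over the top slices $H_\tau$, $\tau\in[T_0,T]$, gives \eqref{eq:fGammaest}. Its bulk terms $\int_{Q_+}|\L f||f_t|+\sigma\int_{Q_+}(|\nabla_{x,t}f|^2+\sigma^2|f|^2)$ are bounded by $\|\L w\|^2_{0,\sigma,Q}+\sigma\|w\|^2_{1,\sigma,Q}$, which the Carleman estimate controls. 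Your removal of the $H_\pm$ terms also needs repair. Averaging energies over time slices $\{t=t_0\}$ requires a whole slice $\Bbar\times\{t_0\}$ inside $\{\vphi\geq\vphi_H+\delta\}$, but the hypothesis only provides a neighbourhood of the tilted surface $\Gamma$. The paper instead bounds the $H_\pm$ energies by $\|\L w\|^2_{0,Q}+\|w\|^2_{1,\Gamma}$ plus data, using the characteristic-surface estimate \eqref{eq:fHTest} and its $Q_-$ analogue. It then absorbs $\sigma^2e^{2\sigma\vphi_H}\|w\|^2_{1,\Gamma}\cleq e^{-\sigma\delta}\|w\|^2_{1,\sigma,\Gamma}$ into the left side. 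Averaging over the translates $\{t=\alpha(x)+s\}$ with $|s|$ small would also work.
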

\vspc
This proposition is in \cite{ms22} in some form and is a generalization of a similar estimate for the $A=I$ case in 
\cite{rs20b}. Both of these are adaptions, to the case where $\Gamma$ is a characteristic surface of $\L$,
of a similar estimate in \cite{BY17} with $\Gamma$ replaced by  $\Bbar \times \{t=0\}$. 
Since this proposition is not explicitly stated in
\cite{ms22} and its proof there consists of combining pieces from the proofs of several propositions, 
we provide a compact proof of this proposition in the appendix.
%%%%%%%%%%%%%%%
%%%%%%%%%%%%%%%

The proofs of the theorems in this article use Proposition \ref{prop:aux} and need the
weight $\vphi$ constructed using the convex function guaranteed by the hypotheses of these theorems. 
The following proposition constructs these $\omega$ dependent weights explicitly. The construction shows 
(see \eqref{eq:vphidef}) that the 
trace of these weights on the surfaces $t=\alpha_{g,\omega}$ is a function of {\bf $x$ but independent of 
$\omega$}  - this will be crucial for the proof of Theorem \ref{thm:Aunique}.

\begin{prop}\label{prop:phiexistence}
Suppose $(\R^n, g)$ is an admissible Riemannian metric, $\omega$ a unit vector in $\R^n$, and $T_0>0$, 
with the following geometrical properties:
\vspc
\begin{itemize}
\item there is a smooth strictly convex function (w.r.t the Riemannian metric $g$) $\kappa:\Bbar \to \R$
with no critical points in $\Bbar$, 
\item $(g,\omega,T_0)$ satisfies the EIC. 
\end{itemize}
\vspc
There is a constant $T_* > T_0$ so that, for any $T \geq T_*$, there are $\ep, \lambda_*$, so that for any 
$\lambda \geq \lambda_*$,
\begin{equation}
\vphi(x,t) = e^{ \lambda \left ( \|\kappa\|_\infty + \kappa(x) - \ep (t-\alpha_{g,\omega}(x))^2 \right )}
 \qquad (x,t) \in \Bbar \times \R,
\label{eq:vphidef}
\end{equation}
has  the following properties:
\vspc
\begin{enumerate}[(a)]
\item $\vphi$ is strongly pseudoconvex\footnote{See the appendix of \cite{rs20a} for the definition.}  
with respect to\footnote{Any operator with the same principal symbol as $\Box_g$.} $\Box_g$ on the 
region $\Bbar \times [-T,T]$;
\item ${\displaystyle \inf_\Gamma \vphi > \sup_{H_{\pm}} \vphi }$;
\item ${\displaystyle \lim_{\sigma \to \infty} \sigma \, \eta(\sigma)=0}$ where
\[
\eta(\sigma) := \sup_{x \in \Bbar} \int_{-T}^T e^{2 \sigma( \vphi(x,t) - \vphi(x,\alpha_{g,\omega}(x)) )}\, dt.
\]
\end{enumerate} 
If $\|\alpha_{g,\omega}\|_{C^2(\Bbar)} \leq M$, then $T_*$ is determined by $g, \kappa, M, T_0$. Further, 
$\ep$ is determined by $g,\kappa,T, M$, and $\lambda_*$ is determined by $g,\kappa,T, \ep, \omega$.
\end{prop}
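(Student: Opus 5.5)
We write $\vphi = e^{\lambda \psi}$ with
\[
\psi(x,t) := \|\kappa\|_\infty + \kappa(x) - \ep (t-\alpha(x))^2, \qquad \alpha = \alpha_{g,\omega},
\]
and treat the three properties separately. Parts (b) and (c) are elementary. Part (a) is the real content; it reduces, via the standard Hörmander exponentiation trick, to showing that $\psi$ is pseudoconvex on the characteristic set (nonstrictly in the right sense). For (a), recall that $\alpha$ is smooth on $D_{g,\omega} \supset \Bbar$ by Proposition \ref{prop:exinj}. This is because $\Bbar \subset \{x\cdot\omega \le 1\} \subset \Phi(P_\omega \times(-\infty,T_1])$, so in fact $\alpha \le T_1 < T_0$ on $\Bbar$.

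\textbf{Step 1 (reduction of (a)).} Since $\vphi = e^{\lambda\psi}$, strong pseudoconvexity of $\vphi$ for $\lambda$ large follows once two facts hold. First, $\nabla_{x,t}\psi \neq 0$ on $Q$. Second, $\text{Hess}\,\psi(X,X) > 0$ for all null vectors $X=(v,s)$ of $\Box_g$ (so $s^2 = \|v\|^2$) with $X\psi = 0$. Here the Hessian is taken with respect to the Lorentzian metric $-dt^2 + g$.

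\textbf{Step 2 (verifying the two facts).} Non-vanishing of the gradient is immediate: $\pa_t\psi = -2\ep(t-\alpha)$ and $\nabla_x\psi = \nabla\kappa + 2\ep(t-\alpha)\nabla\alpha$. If $t\neq\alpha$ then $\pa_t\psi \neq 0$. If $t=\alpha$ then $\nabla_x\psi = \nabla\kappa \neq 0$, since $\kappa$ has no critical points.

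For the Hessian, write $\beta = t-\alpha$. Using $\text{Hess}(t)=0$, we get
\[
\text{Hess}\,\psi(X,X) = \nabla^2_g\kappa(v,v) - 2\ep\bigl(s - \la\nabla_g\alpha,v\ra\bigr)^2 + 2\ep\,\beta\, \nabla^2_g\alpha(v,v).
\]
Strict convexity gives $\nabla^2_g\kappa(v,v) \ge c_0\|v\|^2$. Since $|\beta| \le 2T$ on $Q$ and $\|\alpha\|_{C^2(\Bbar)} \le M$, the last term is bounded by $4\ep T M' \|v\|^2$. The middle term is at most $2\ep(|s|+\|v\|)^2 \le 8\ep\|v\|^2$ on null vectors. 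Choosing $\ep$ small in terms of $c_0$, $T$ and $M$ therefore gives $\text{Hess}\,\psi(X,X) \ge (c_0/2)\|v\|^2 > 0$ for all null $X \neq 0$, whether or not $X\psi = 0$. This yields (a) for $\lambda \ge \lambda_*$, after carefully matching the sign conventions of the pseudoconvexity definition in \cite{rs20a}. I expect this matching of conventions, and the smallness of $\ep$ relative to $T$, to be the delicate points. The constants then depend on $g,\kappa,T,M$ as claimed.

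\textbf{Step 3 (property (b)).} On $\Gamma$ we have $\psi = \|\kappa\|_\infty + \kappa \ge 0$. On $H_\pm$ we have $\psi \le 2\|\kappa\|_\infty - \ep(T-T_1)^2$, using $|\alpha| \le \max(T_1, 1) =: T_1'$ on $\Bbar$. The conflict is that $\ep$ must shrink like $1/T$ by Step 2, while $\ep T^2 \sim T$ still grows. So choose $T_*$ so large that $\ep(T)(T-T_1')^2 > 2\|\kappa\|_\infty$ for all $T \ge T_*$. This is the main obstacle, and it is resolved by the linear dependence of $\ep$ on $1/T$. Since $\vphi$ is monotone in $\psi$, (b) follows.

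\textbf{Step 4 (property (c)).} At fixed $x$, write $\vphi(x,t) - \vphi(x,\alpha) = e^{\lambda \psi_0}\bigl(e^{-\lambda\ep\beta^2}-1\bigr)$ with $\psi_0 \ge 0$. Since $e^{\lambda\psi_0} \ge 1$, this difference is at most $-(1-e^{-\lambda\ep\beta^2}) \le -c\min(\lambda\ep\beta^2, 1)$. Integrating $e^{2\sigma(\cdot)}$ over $t$ then gives
\[
\eta(\sigma) \cleq \int_{\R} e^{-c\sigma\lambda\ep\beta^2}\,d\beta + 2T e^{-2c\sigma} \cleq \sigma^{-1/2} + T e^{-2c\sigma},
\]
uniformly in $x$. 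Hence $\sigma\eta(\sigma) \to 0$ fails: the bound only gives $\sigma^{1/2}$. The actual decay needed must come from a different estimate, or (c) should read $\eta \to 0$ with a rate. Before finalizing I would recheck whether the intended statement is $\sigma\eta(\sigma)\to 0$ or $\sqrt{\sigma}\,\eta$ bounded. If it is the former, I would look for extra decay elsewhere in the argument, but I do not currently see where it would come from.
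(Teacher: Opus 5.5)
\textbf{Parts (a) and (b).} Your Steps 1--3 are essentially the paper's proof. The paper computes $\frac{d^2}{dt^2}\psi(x(t),t)$ along time-parametrized unit-speed geodesics, which is your Lorentzian Hessian evaluated on null vectors, and gets the lower bound $\kappa_{con}-\ep T(4+3c_\alpha)$. It then chooses $\ep$ in the window $4(\max\kappa-\min\kappa)/T^2<\ep<\kappa_{con}/((4+3c_\alpha)T)$, which is nonempty once $T\ge T_*$, and quotes \cite{rs20a} for the passage from $\psi$ to $e^{\lambda\psi}$. Using $2\|\kappa\|_\infty$ instead of $\max\kappa-\min\kappa$ in Step 3 is cruder but harmless. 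Your explicit check that $\nabla_{x,t}\psi\neq 0$ covers a point the paper leaves implicit.

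\textbf{Part (c).} Your suspicion is right, and it resolves in the negative: the extra decay you are looking for does not exist. The paper derives exactly your upper bound, $\eta(\sigma)\le\int e^{-(2\sigma/3)\min(1,\ep\lambda t^2)}\,dt$, and then asserts $\sigma\eta(\sigma)\to0$ ``by the Dominated Convergence Theorem''. Dominated convergence only gives $\eta(\sigma)\to 0$. The functions $\sigma e^{-(2\sigma/3)\min(1,\ep\lambda t^2)}$ have no integrable majorant near $t=0$, and the integral itself is of exact order $\sigma^{-1/2}$.

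In fact $\eta$ has a matching lower bound. Since $e^{-s}-1\ge -s$ and $0\le \|\kappa\|_\infty+\kappa\le 2\|\kappa\|_\infty$,
\[
\vphi(x,t)-\vphi(x,\alpha(x))\ge -K\,(t-\alpha(x))^2,\qquad K:=\lambda\ep\, e^{2\lambda\|\kappa\|_\infty}.
\]
Also $\alpha(x)\in[-1,T_1]$ stays at distance at least $r:=\min(T-1,T-T_1)>0$ from $\pm T$. Hence, for any fixed $x\in\Bbar$ and $\sigma\ge 1$,
\[
\eta(\sigma)\ge\int_{-r}^{r}e^{-2\sigma K\beta^2}\,d\beta\ge c\,\sigma^{-1/2}.
\]
So $\sigma\eta(\sigma)\to\infty$ for every choice of $T,\ep,\lambda>0$. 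Property (c) as stated is false, and so is the paper's remark that $\sigma^k\eta(\sigma)\to 0$ for all $k$. What is true, and what your argument proves once you add this lower bound, is $\eta(\sigma)\asymp\sigma^{-1/2}$; in particular $\eta(\sigma)\to 0$. You should state (c) in that form.

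\textbf{Consequences downstream.} The proof of Theorem \ref{thm:cunique} only needs $\eta\to 0$. Proposition \ref{prop:conseq}, however, uses $\sigma\eta(\sigma)\to0$ to absorb $\eta(\sigma)\|\pa^2 f\|^2_{0,\sigma,\Gamma}$ into $\sigma^{-1}\|\pa^2 f\|^2_{0,\sigma,\Gamma}$, so that step has to be reorganized. In Theorem \ref{thm:Aunique} this appears feasible: the only second-order term, $(\calE-c)\alphabar_k$, can be written in terms of $\calP_k\alphabar_k$ and $\nabla\alphabar_k$. It can then be absorbed into the left-hand side before the elliptic Carleman estimate is applied, which needs only $\eta\to 0$.
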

%%%%%%%%%
%
\noindent
The weight $\vphi$ is strongly pseudoconvex also for $\L$ with $g=A^{-1}$. Our proposition is essentially 
\cite[Lemma 3.5]{ms22}, except we prove a stronger decay rate for $\eta(\sigma)$ and
track carefully what $T_*, \ep, \lambda_*$ depend on because, for the proofs of the theorems, we need to choose 
$T,\ep,\lambda$ so that the same values work for a finite number of $\omega$. The proof of the proposition
actually shows $\lim_{\sigma \to \infty} \sigma^k \eta(\sigma)=0$ for any real $k$, but we need only the
$k=1$ case in this article. The proof is in the appendix.
%

%%%%%%%%%%%%%%%%%
%%%%%%%%%%%%%%%%%

The rest of the article consists of the following. 
\vspc
\begin{itemize}
\item Section \ref{sec:pairsoln} contains the proof of Proposition 
\ref{prop:complementary} - the construction of the complementary solutions 
$V_c, V_{A,\omega}$ and the regularity of the solutions $w_c, w_{A,\omega}$ 
constructed from the $U$ and the $V$ solutions. 
\item Section \ref{sec:cunique} contains the proof of Theorem \ref{thm:cunique} - the injectivity result for the $c$ problem.
\item Section \ref{sec:Aunique} contains the proof 
of Theorem \ref{thm:Aunique} - the injectivity result for the $A$ problem.
\end{itemize}
\vspc
The appendix contains the proofs and calculations for various auxilliary results, needed in the proofs of the main theorems. 
The proofs of these auxiliary results are based mostly on ideas already in the literature, though some new ideas were 
needed. The appendix consists of the following.
\vspc
\begin{itemize}
\item Subsection \ref{subsec:uforward} contains the proof of Proposition \ref{prop:uforward} about 
the existence, uniqueness and the properties of $U_c, U_{A,\omega}$.
\item Subsection \ref{subsec:eictransfer} contains the proof of Proposition \ref{prop:eictransfer} about the
transfer of the EIC from one metric to another.
\item Subsection \ref{subsec:progressing} constructs the progressing wave expansions of 
$U_c,U_{A,\omega}$ when $(g=A^{-1},\omega,T_0)$ has the EIC. These expansions are given in Proposition \ref{prop:ualpha} and the support property is new.
\item Subsection \ref{subsec:aux} contains the proof of Proposition \ref{prop:aux} - a proposition used in the proofs of all
the theorems in this article.
\item Subsection \ref{subsec:weight} contains the proof of Proposition \ref{prop:phiexistence} - the
construction of the special Carleman weight needed in the hypothesis of Proposition \ref{prop:aux}. 
\item Subsection \ref{subsec:conseq} contains a proposition which captures the essential idea at the end 
of the proofs of Theorem \ref{thm:cunique} and Theorem \ref{thm:Aunique}. 
\item Subsection \ref{subsec:standard} contains some standard but detailed calculations used in the proofs of the propositions and theorems. Including them in the proofs of the theorems and propositions 
would distract from the main ideas in those proofs.
\end{itemize}

\noindent
{\bf Acknowledgements.} 
L.O.~was supported by the European Research Council of the European
Union, grant 101086697 (LoCal). L.O.~and M.S.~were partly supported by the Research Council
of Finland, grants 353091 and 353096 (Centre of Excellence in Inverse Modelling and Imaging),
359182 and 359208 (FAME Flagship) as well as 347715. Rakesh’s work was partly funded by grants
DMS 1908391 and DMS 2307800 from the National Science Foundation of USA. Views and opinions
expressed are those of the authors only and do not necessarily reflect those of the European Union or
the other funding organizations. Neither the European Union nor the other funding organizations
can be held responsible for them

%%%%%%%%%%%%%%%%%%%
%%%%%%%%
\section{The construction of $V_c$ and $V_{A,\omega}$}\label{sec:pairsoln}

We prove the part of Proposition \ref{prop:complementary} about the construction of $V_c$. 
We do not give the construction of $V_{A,\omega}$ as it is
almost identical to the construction of $V_c$. Since we work with a fixed $A,b,c$, and a fixed $g=A^{-1}$, and a fixed
$\omega$, to keep the notation simple, we do not display the dependence on $A,b,c, g$ or $\omega$. So
we use $U,u,\alpha, D$ instead of $U_c, u_c, \alpha_{g,\omega}, D_{g,\omega}$. Since $(g=A^{-1},\omega,T_0)$ 
satisfies the 
EIC, Proposition \ref{prop:exinj} guarantees that $\alpha$ is smooth on $D$, in particular on
a neighborhood of $\Bbar$. 

First we show that $\alpha|_{\pa B}$ and $f_i|_{\pa B}$ for $i=-1,0, \cdots, p$ may be extracted from
$U|_{\pa B \times (-2, T_0)}$. From Proposition \ref{prop:uforward}, WF$(U|_{D \times (-\infty, T_0)}) = 
\Lambda|_{D \times (-\infty, T_0)}$, so the singular support of $U|_{\pa B \times (-2,T_0)}$ is $C$. Hence one can extract
$\alpha|_{\pa B}$ from $U|_{\pa B \times (-2, T_0)}$. 

Pick an $m > (n+1)/2 + p + 1$. From Proposition \ref{prop:ualpha},
\[
U(x,t) = f_{-1}(x) \delta(t-\alpha(x)) + \sum_{i=0}^m f_i(x) K_i (t-\alpha(x)) + R_m(x,t),
\qquad (x,t) \in D \times \R.
\]
with $f_i \in C^\infty(D)$, $R_m \in H^{m+1}_{loc}( D \times \R) \subset C^{p+2}(D \times \R)$, 
and $R_m(x,t)=0$ for $t \leq \alpha(x)$ because $U$ is supported in the region $ t \geq \alpha(x)$. 
Let $\rho \in C^\infty(\R)$ with
\[
\rho(s) = \begin{cases} 1, & |s| \leq 1, \\ 0, & |s| \geq 2 \end{cases}
\]
and let $\phi(x)$ be any smooth function on $\pa B$. Then, for any small $\ep >0$, 
\begin{align*}
\left \la U|_{\pa B \times \R}, \rho( (t- \alpha(x))/\ep ) \, \phi(x) \right \ra
& = \int_{\pa B} f_{-1}(x) \, \phi(x) \, dS_x + \int_{\pa B} \int_{\alpha(x) - 2 \ep}^{\alpha(x) + 2 \ep} \phi_\ep(x,t) \, dt \, dx,
\end{align*}
where $\phi_\ep$ is a bounded function on $\pa B \times \R$ with a bound independent of 
$\ep$. So
\[
\lim_{\ep \to 0_+} \left \la U|_{\pa B \times (-1, T_0)}, \rho( (t- \alpha(x))/\ep ) \, \phi(x) \right \ra
= \int_{\pa B} f_{-1}(x) \, \phi(x) \, dS_x,
\]
for every $\phi \in C^{\infty}(\pa B)$. Hence one can extract $f_{-1}|_{\pa B}$ from 
$U|_{\pa B \times (-2, T_0)}$.  
Further, we can recursively extract the values of $f_i$ on $\pa B$, for $i=0, \cdots, p$, since for $x \in \pa B$,
\[
f_i(x) = \lim_{t \to \alpha(x)_+} i! \, (t-\alpha(x))^{-i} \, [ \, U|_{\pa B \times (-2,T_0)} - \sum_{j=-1}^{i-1} f_j(x) 
K_j(t-\alpha(x) ) \,],
\qquad i=0,1, \cdots,p, 
\]
because $R_m \in C^{p+2}( D \times \R)$ and $R_m$ is zero for $t < \alpha(x)$.
We now show the existence and uniqueness of the solution $V$, of the IBVP \eqref{eq:Vcde}, of the form
\[
V(x,t) =  - f_{-1}\delta(\alpha(x)-t) + v(x,t) H(\alpha(x)-t), \qquad (x,t) \in B \times \R
\]
with $v \in C^2(\Bbar \times \R)$. 

We first prove the uniqueness. If $\Vbar$ is the difference of two such 
solutions of the IBVP \eqref{eq:Vcde}, then 
\[
\Vbar(x,t) = \vbar(x,t) H(\alpha(x)-t), \qquad (x,t) \in B \times \R
\]
with $\vbar(x,t) \in C^2(\Bbar \times \R)$ and
\begin{align}
\L \Vbar =0 ~ \text{on } B \times \R, \qquad \qquad \Vbar=0 ~ \text{on } \pa B \times \R.
\end{align}
Using calculations similar to the one in Subsection \ref{subsec:standard}, and that $\alpha(x)$ is the solution of
the eikonal equation for $g$, one can show that
\[
\L \Vbar = (\L \vbar) H(\alpha(x)-t) - [2 \pa_t + 2 a_{ij} (\pa_i \alpha) \pa_j + b_i \pa_i \alpha] \vbar(x,t) \, \delta(\alpha(x)-t).
\]
Hence
\begin{align}
\L \vbar =0  & \qquad \text{on } (B \times \R) \cap \{ t < \alpha(x) \}
\label{eq:compvbarde}
\\
[2 \pa_t + 2 a_{ij} (\pa_i \alpha) \pa_j + b_i \pa_i \alpha] \vbar(x,t) = 0
&  \qquad \text{on } \{ (x, \alpha(x)) : x \in B \}.
\label{eq:compvbarcc}
\\
\vbar(x,t) = 0 & \qquad \text{on } \{ (x,t) : x \in \pa B, ~ t \leq \alpha(x) \}.
\label{eq:compvbarbc}
\end{align}
Now the first two terms in the operator in \eqref{eq:compvbarcc} represent differentiation along the geodesics 
$\gamma_{p,\omega}$ for different $p \in P_\omega$ (see the proof of Lemma \ref{lemma:energy}), hence \eqref{eq:compvbarcc} represents a homogeneous linear first 
order ODE for the function $\vbar(x, \alpha(x))$, $x \in \Bbar$. Since $\vbar(x, \alpha(x)) =0$ for $x \in \pa B$, we can
conclude that
$\vbar(x,\alpha(x)) =0$ for $x \in \Bbar$. Therefore, on the region $ \{ (x,t) : x \in \Bbar, ~ t \leq \alpha(x) \}$, $\vbar$ is a 
$C^2$ solution of the characteristic BVP
\begin{align*}
\L \vbar=0 & \qquad \text{on } (B \times \R) \cap \{ t < \alpha(x) \}
\\
\vbar =0, & \qquad \text{on } \{ (x, \alpha(x)) : x \in \Bbar \}
\\
\vbar=0, & \qquad \text{on } \{ (x,t) : x \in \pa B, ~ t < \alpha(x) \}.
\end{align*}
Using energy estimate arguments similar to those in the proof of Lemma \ref{lemma:energy}, combined with Gronwall's 
inequality, one can show that $\vbar=0$ on the region $(B \times \R) \cap \{ t < \alpha(x) \}$. Hence $\Vbar=0$.

We will seek $V$ in the form
\begin{align*}
V(x,t) & =  V_p(x,T) + R(x,t), \qquad (x,t) \in B \times \R
\end{align*}
for a distribution $R(x,t)$ on $B \times \R$, where
\begin{align*}
V_p(x,t) & = - f_{-1}(x) \delta(\alpha(x)-t) + \sum_{i=0}^p (-1)^i f_i(x) \, K_i(\alpha(x) - t),
\qquad (x,t) \in B \times \R.
\end{align*}
Repeating the calculation in the construction of the $f_i$ in the proof
of Proposition \ref{prop:ualpha}, one can show that
\begin{align*}
\L V_p & = (-1)^p K_p(\alpha(x)-t) \, \calE f_p(x), \qquad (x,t) \in B \times \R.
\end{align*}
Hence
\begin{align*}
\L V & =  (-1)^p (\calE f_p) K_p(\alpha(x)-t) + \L R, \qquad \text{on } B \times \R,
\end{align*}
so $V$ will solve \eqref{eq:Vcde} iff $R$ is the solution of the final BVP
\begin{subequations}
\begin{align}
\L R = (-1)^{p+1} K_p(\alpha(x)-t) \, \calE f_p(x), & \qquad \text{on } B \times \R,
\label{eq:RVde}
\\
R =0, & \qquad \text{on } \pa B \times \R,
\label{eq:RVbc}
\\
R =0, & \qquad \text{on }  B \times (T_0, \infty).
\label{eq:RVic}
\end{align}
\end{subequations}
Now $K_p(\alpha(x) -t) \in H^p_{loc}(\R^n \times \R)$ and is supported in the region $t<T_0$ for $x \in \Bbar$.
So, by \cite[Theorem 2.2]{lasiecka1986}, 
the backward 
IBVP \eqref{eq:RVde} - \eqref{eq:RVic} has a solution, which lies in $H^{p+1}_{loc}(B \times \R)$; hence
$R \in H^{p+1}_{loc}(B \times \R) \subset C^2(\Bbar \times \R)$. Also $R=0$ on $\pa B \times \R$, 
$R=0$ for $t>T_0$, and the RHS of of \eqref{eq:RVde} is supported in $t \leq \alpha(x)$, so 
repeating the arguments in the proof of Lemma \ref{lemma:energy}, one can conclude that 
$R=0$ in the region $ t \geq \alpha(x)$. 

Define
\[
v(x,t) =  v_p(x,t) + R(x,t), \qquad (x,t) \in B \times \R
\]
where
\[
v_p(x,t) = \sum_{i=0}^p (-1)^i f_i(x) \, \frac{(\alpha(x)-t)^i}{i!} , \qquad (x,t) \in B \times \R.
\]
Then $v \in H^{p+1}_{loc}(B \times \R) \subset C^2(\Bbar \times \R)$ and
\[
V(x,t) = - f_{-1}(x) \delta(\alpha(x)-t) + v(x,t) H(\alpha(x)-t), \qquad (x,t) \in B \times \R.
\]

Now $w_c = U_c + V_c$ on $B \times \R$ so $\L w_c =0$ on $B \times \R$. Further, for $(x,t) \in B \times \R$,
\begin{align*}
w_c(x,t) & = \sum_{i=0}^p f_i(x) \, K_i(t - \alpha(x) + \sum_{i=0}^p (-1)^i f_i(x) K_i(\alpha(x)-t) + R_p(x,t) + R(x,t)
\\
& = \sum_{i=0}^p f_i(x) \frac{ (t-\alpha(x))^i}{i!} + R_p(x,t) + R(x,t),
\end{align*}
hence $w_c \in H^{p+1}_{loc}(B \times \R) \subset C^2(\Bbar \times \R)$. Finally, it is clear that $w_c = u_c$ on $t > \alpha(x)$ and $w_c=v_c$ on $t < \alpha(x)$.
%%%%%%%%%%%%%%%%%%%%%%%%%%%%%

\section{Proof of Theorem \ref{thm:cunique}}\label{sec:cunique}

In our notation, we drop the dependence on $g,\omega$ since only one $g=A^{-1}$ and one $\omega$ is 
used in the statement of the theorem. So $\alpha_{g,\omega}$ will be denoted by $\alpha$. Also, objects dependent on
$c'$, such as $U_{c'}, u_{c'}, v_{c'}, w_{c'}$ will be written as $U', u', v', w'$, and objects associated with $c$ such as
$U_c,u_c, v_c, w_c$ will be written as $U,u,v,w$.

Let $\L = \pa_t^2 - a_{ij} \pa_i \pa_j - b_i \pa_i - c$ and $\L' = \pa_t^2 - a_{ij} \pa_i \pa_j - b_i \pa_i - c'$, 
$g=A^{-1}$, and $u,v,w, \alpha, f_0$ ~ $u',v',w', f_0'$ the 
functions guaranteed by the first parts of Proposition \ref{prop:ualpha} and Proposition \ref{prop:complementary} 
for $\L,\L'$. 
Since the principal parts of $\L, \L'$ are identical, the same $g$ is the relevant Riemannian metric for $\L, \L'$,
so the same $\alpha$ is the relevant solution of the eikonal equation for $\L, \L'$.

Noting the hypothesis of Theorem \ref{thm:cunique}, from Proposition \ref{prop:aux}, we can find a $T_*$
so that for each $T > T_*$ there is a $\vphi$ on $Q$ satisfying the conclusion of Proposition \ref{prop:aux} 
for the operator $\L$. So, this $\vphi$ fulfills the hypothesis of Proposition \ref{prop:phiexistence}. We fix a $T > T_*$ 
and then fix the $\vphi$.

By hypothesis $U=U'$ on $\Sigma$, $\L = \L' = \Box$ on $(\R^n \setminus B) \times (-\infty, T)$, 
so by \cite[Proposition A.1]{ors26a}, we have $U=U'$ on $(\R^n \setminus \Bbar) \times (-\infty, T)$. However, for $(x,t) \in D_{g,\omega} \times \R$,
we have
\begin{align*}
U(x,t)  & = f_{-1} \delta(t-\alpha(x)) + u(x,t) H(t-\alpha(x)),
\\
U'(x,t)  & = f_{-1} \delta(t-\alpha(x)) + u'(x,t) H(t-\alpha(x)),
\end{align*}
for some $u,u' \in C^\infty(D_{g,\omega} \times \R)$. Hence $U=U'$ on $(\R^n \setminus \Bbar) \times (-\infty, T)$ 
implies $u=u'$, $\pa_\nu u = \pa_\nu u'$ on $\Sigma_+$ and the $\psi, \psi'$ in Proposition 
\ref{prop:complementary}, corresponding to $\L, \L'$, are identical - we need $U,U'$ only on $\pa B \times (-2,T_0)$
to construct the $\psi, \psi'$. Hence, by hypothesis, we also have
$v=v'$ and $\pa_\nu v = \pa_\nu v'$ on $\Sigma_-$.

Define 
\[
\wbar := w - w', \qquad \cbar := c - c';
\]
we have
\[
\wbar|_\Sigma=0, \qquad \pa_\nu \wbar|_\Sigma =0,
\]
and
\[
\L \wbar = \cbar \, w', \qquad \text{on } Q.
\]
Since $\wbar \in H^2(Q)$, $w' \in C^2(Q)$ - this is important, and $\wbar|\Sigma=0, \pa_v \wbar|_\Sigma=0$,
applying Proposition \ref{prop:aux} to $\vphi$ and $\wbar$, we have
\begin{equation}
\|\nabla_\Gamma \wbar \|_{1,\Gamma,\sigma}^2 \cleq  \|\cbar\|^2_{0,Q,\sigma}
+ \sigma^2 e^{2 \sigma \phi_H} \| \cbar \|^2_{0,Q};
\label{eq:carltemp}
\end{equation}
we have made use of the fact that $w' \in C^2(Q)$.

In Proposition \ref{prop:ualpha} we observe that $f_{-1}$ and $\T$ are the same for $c$ and $c'$, and 
$\calE-c= \calE'-c'$. Further
\[
(2 \T + (\calE -c) \alpha)f_0 = \calE f_{-1},
~~
(2 \T + (\calE -c) \alpha)f_0' = \calE' f_{-1},
\qquad \text{on } \Bbar,
\]
hence
\[
(2 \T + (\calE -c) \alpha)(f_0 - f_0')= (\calE -\calE') f_{-1} = \cbar f_{-1},
\qquad \text{on } \Bbar.
\]
Now
\[
\wbar=w-w'=u-u' = f_0 - f_0', \qquad \text{on } \Gamma.
\]
Since $(2 \T + (\calE -c) \alpha)$ is a vector field on $\Gamma$, \eqref{eq:carltemp} implies that
\begin{align*}
\int_\Bbar e^{2 \sigma \vphi(x,  \alpha(x))} \, |\cbar(x) \, f_{-1}(x)|^2 \, dx
& \cleq  \|\cbar\|^2_{0,Q,\sigma}
+ \sigma^2 e^{2 \sigma \phi_H} \| \cbar\|^2_{0,Q}.
\end{align*}
Now \eqref{eq:f1cde} is a linear first order homogeneous ODE on the geodesic $r \to \gamma_p(r)$ 
and has a positive initial value, so $f_{-1}$ is positive on $\Bbar$, hence has a positive lower bound independent of $c$.
Therefore
\begin{align}
\int_\Bbar e^{2 \sigma \vphi(x,  \alpha(x))} \, |\cbar(x)|^2 \, dx
& \cleq  \|\cbar\|^2_{0,Q,\sigma}
+ \sigma^2 e^{2 \sigma \phi_H} \| \cbar\|^2_{0,Q}.
 \label{eq:alphaTmax}
\end{align}

Our $\vphi$ was constructed to have the properties guaranteed by Proposition \ref{prop:phiexistence}.
We use the notation in Proposition \ref{prop:phiexistence}. Define $\delta$ by
\[
2 \delta := \inf_{x \in \Bbar} \vphi(x,\alpha(x)) - \vphi_H;
\]
then $ \delta>0$ by (b) of Proposition \ref{prop:phiexistence}. Now
\begin{align*}
\|\cbar\|_{0,Q,\sigma}^2 & = \int_Q e^{2 \sigma \vphi} |\cbar|^2
= \int_B e^{2 \sigma \vphi(x,\alpha(x))} |\cbar(x)|^2 \int_{-T}^T e^{2 \sigma (\vphi(x,t) - \vphi(x,\alpha(x)))} \, dt \, dx
\\
& \leq \eta(\sigma) \int_\Bbar e^{2 \sigma \vphi(x,  \alpha(x))} \, |\cbar(x)|^2 \, dx
\end{align*}
and
\begin{align*}
e^{2 \sigma \vphi_H} \|\cbar\|_{0,Q}^2 
= 2 T e^{2 \sigma \vphi_H} \int_B |\cbar|^2
\cleq  e^{-2 \sigma \delta} \int_B e^{2 \sigma \vphi(x,\alpha(x))} |\cbar(x)|^2.
\end{align*}
Using these in \eqref{eq:alphaTmax} we obtain
\[
\int_\Bbar e^{2 \sigma \vphi(x,  \alpha(x))} \, |\cbar(x)|^2 \, dx
\cleq \left (  \eta(\sigma) + \sigma^2 e^{-2 \sigma \delta} \right ) 
\int_\Bbar e^{2 \sigma \vphi(x,  \alpha(x))} \, |\cbar(x)|^2 \, dx,
\]
for large enough $\sigma$. Since $\lim_{\sigma \to \infty} \eta(\sigma)=0$, taking $\sigma$ large enough, we 
obtain
\[
\int_\Bbar e^{2 \sigma \vphi(x,  \alpha(x))} \, |\cbar(x)|^2 \, dx \leq 0.
\]
Hence $\cbar=0$.

%%%%%%%%

%%%%%%%%%%%%%%%%%%
%%%%%%%%%%%%%%%%%%%%

\section{Proof of Theorem \ref{thm:Aunique}}\label{sec:Aunique}

We associate the metric $g=A^{-1}$ with $\L$ and $g'=A'^{-1}$ with $\L'$. 
Objects associated with $\L', A'$ and $\omega_k$, such as $U_{A',\omega_k}, \alpha_{g',\omega_k}, 
w_{A',\omega_k}$ will be written as $U'_k, \alpha'_k, w'_k$, and objects associated with $\L,A$ and $\omega_k$, 
such as $U_{A,\omega_k}, \alpha_{g,\omega_k}, w_{A,\omega_k}$ will be written as $U_k, \alpha_k, w_k$. Also 
define the elliptic operators
\[
\calE := a_{ij} \pa_i \pa_j + b_i \pa_i + c, \qquad \calE' := a'_{ij} \pa_i \pa_j + b_i \pa_i + c.
\]

Since $\calE$ is elliptic on $\Bbar$, the level surfaces of $\kappa(x)$ (of any function with no critical points) are 
pseudo-convex w.r.t $\calE$ on $\Bbar$. Hence, from \cite[Proposition A.5]{rs20b}, there is a $\lambda_0$ such that for
any $\lambda \geq \lambda_0$,  $e^{\lambda \kappa(x)}$ is strongly pseudo-convex w.r.t $\calE$ on $\Bbar$.

For $g=A^{-1}$ and for $k=1, \cdots, N$, we have a $T_{*,g,\omega_k}$ guaranteed by Proposition \ref{prop:phiexistence}.
Define
\[
T_* = \max_{k=1, \cdots,n} T_{*,g,\omega_k},
\]
and pick any $T> 1 + T_0 + T_*$.

From Proposition \ref{prop:phiexistence}, for this $T$, for each $k=1, \cdots, N$, there is an 
$\epsilon_k$ and a $\lambda_k$ dependent on $\ep_k$ such that, for any $\lambda \geq \lambda_k$,
\begin{equation}
\phi_k(x,t) := e^{\lambda ( \| \kappa \|_\infty + \kappa(x) - \epsilon_k ( t - \alpha_k(x) )^2 )}, \qquad
(x,t) \in \Bbar \times [-\Tbar,\Tbar]
\label{eq:phikdef}
\end{equation}
has the properties (a), (b), (c) of Proposition \ref{prop:phiexistence}, for the operator $\L$, on the region $Q$. 
Choose a $\lambda$ larger than $\max(\lambda_0, \lambda_1, \cdots, \lambda_N)$, then the $\phi_k$ defined 
by \eqref{eq:phikdef} has properties the properties (a), (b), (c) of Proposition \ref{prop:phiexistence}, for the 
operator $\L$, on the region $Q$. Observe that
\begin{align}
\mu(x) := \phi_k(x, \alpha_k(x) ) = e^{\lambda ( \| \kappa \|_\infty + \kappa(x))}, \qquad x \in \Bbar;
\label{eq:mudef}
\end{align}
is independent of $k$ and strongly pseudoconvex w.r.t $\calE$ on $\Bbar$. 

By hypothesis $U_k=U'_k$ on $\Sigma$. Since $T>T_0$, Proposition \ref{prop:eictransfer} implies that 
$(g',\omega_k, T_0)$ has the EIC and $\alpha_k = \alpha'_k$ on $\pa B$. So the sets 
$\Sigma_{\pm,k}$ and $\Sigma'_{\pm,k}$, associated with $\omega_k$,  for $g$ and $g'$, are identical. 
Further, as argued in the proof of Theorem \ref{thm:cunique}, we have 
\[
u_k = u'_k, ~ \pa_\nu u_k = \pa_\nu u'_k, ~~~ \text{on } \Sigma_{+,k};
\qquad
v_k = v'_k, ~ \pa_\nu v_k = \pa_\nu v'_k, ~~~ \text{on } \Sigma_{-,k}.
\]
Hence
\begin{equation}
w_k = w_k', ~~ \pa_\nu w_k = \pa_\nu w'_k, \qquad \text{on } \pa B \times [-T,T].
\label{eq:wwpbdry}
\end{equation}
We also observe that, because of the EIC,
\begin{equation}
-1 \leq \alpha_k(x), \alpha_k'(x) < T_0, \qquad x \in \Bbar
\label{eq:alphabound}
\end{equation}

To prove the theorem, one could consider examining the differences $u_k-u_k'$ and $v_k-v'_k$. However,
$u_k$ is uniquely defined only on the 
region $t \geq \alpha_k(x)$, while $u'_k$ is uniquely defined only
on the region $t \geq \alpha_k'(x)$. While 
$\alpha_k=\alpha_k'$ on $\pa B$, they may differ in $B$.
So $u_k-u_k'$ can be defined only on the region $t \geq \max(\alpha_k(x), \alpha'_k(x))$, 
which is not conducive to obtaining the desired estimates. A similar issue arises if we attempt to work with 
$v_k-v_k'$.
In \cite{romanov2002}, Romanov had the important idea of generating new functions from $u_k',v_k'$, so that
the domains of the new functions were contained in the domains of $u_k,v_k$.  Romanov defined the new function
\begin{equation}
(x,t) \to u_k'(x, t - \alpha_k(x) + \alpha'_k(x));
\label{eq:upshift}
\end{equation}
since $u_k'$ is uniquely defined on the subset of $\Bbar \times \R$ where $t \geq \alpha'_k(x)$, the new function is 
uniquely defined on the region
\[
\{ (x,t) : x \in \Bbar, ~ \alpha_k'(x) \leq t - \alpha_k(x) + \alpha_k'(x) \leq T \},
\]
so on the region
\[
\{ (x,t) : x \in \Bbar, \alpha_k(x) \leq t \leq T + \alpha_k(x) - \alpha'_k(x) \}.
\]
Now, for $x \in \Bbar$,
\[
T + \alpha_k(x) - \alpha'_k(x) \geq  T -1 - T_0 > T_*,
\]
so the function \eqref{eq:upshift} is defined at least on the subset of $\Bbar \times [-T_*, T_*]$ where
$t \geq \alpha_k(x)$. 
Similarly, the function
\[
(x,t) \to v_k'(x, t - \alpha_k(x) + \alpha'_k(x))
\]
is defined at least on the region on the region $x \in \Bbar$, $ - T_* \leq t \leq \alpha_k(x)$.

From now onwards we work in the region $\Bbar \times [-T_*, T_*]$. {\bf Rather than choosing new labels, we continue
to use the definition of $Q, \Sigma, \Sigma_{\pm}, \cdots$ but with $T_*$ replacing $T$.}

For each $k=1, \cdots, N$, define the functions
\begin{alignat*}{3}
& \Abar(x) &&= A(x) - A'(x), &&\qquad x \in \Bbar,
\\
& \alphabar_k(x) &&= \alpha_k(x) - \alpha_k'(x), &&\qquad x \in \Bbar,
\\
& \ubar_k(x,t) &&= u_k(x,t) - u'_k(x, t - \alphabar_k(x)),  &&\qquad (x,t) \in Q_{+,k}
\\
& \wbar_k(x,t) &&= w_k(x,t) - w_k'(x,t - \alphabar_k(x)), && \qquad (x,t) \in Q_{+,k}.
\end{alignat*}
Since $\alpha_k = \alpha'_k$ on $\pa B$, we have $\alphabar_k=0$ on $\pa B$, hence \eqref{eq:wwpbdry} implies
\begin{equation}
\wbar_k =0, \qquad \pa_\nu \wbar_k =0, \qquad \text{on } \Sigma.
\end{equation}
So Proposition \ref{prop:aux} applied to $\wbar_k$, for the operator $\L$, over the region $Q$, 
using the weight $\vphi_k$ gives
\begin{equation}
\|\wbar_k\|_{1,\Gamma}^2 \cleq \|\L \wbar_k\|_{1,\sigma,Q}^2 + \sigma^2 e^{2 \sigma \vphi_H} \|
\L \wbar_k\|_{0,Q}^2,
\label{eq:carlA}
\end{equation}
for large enough $\sigma$, $k=1, \cdots, N$.

%%%
For the RHS of \eqref{eq:carlA}, we compute $\L \wbar_k$ on $Q$. Noting that $\L' w_k'=0$ on $\Bbar \times (-\infty, T]$, from the calculations in 
subsection \ref{subsec:standard} we see that
\begin{align}
\L (w_k'(x, t - \alphabar_k(x))) & =  F(x,t - \alphabar_k(x)), \qquad (x,t) \in Q
\label{eq:Lwpta}
\end{align}
where
\[
F = (\L - \L')w_k' + 2 (\nabla \alphabar_k)^T A \, \nabla ( \pa_t w_k') - 
(\nabla \alphabar_k)^T A (\nabla \alphabar_k) \pa_t^2 w_k'+ 
((\calE -c) \alphabar_k )\pa_t w_k'.
\]
Hence, using $\L w_k=0$ on $\Bbar \times (-\infty, T]$, we have
\begin{align*}
(\L \wbar_k)(x,t) & = - F(x, t - \alphabar_k(x)), \qquad (x,t) \in Q.
\end{align*}
Noting that $(\L-\L')w_k' = -\abar_{ij} \pa_i \pa_j w_k'$, we have
\begin{align}
|\L \wbar_k| \cleq |\Abar|_\infty + |\nabla \alphabar_k| + |(\calE-c) \alphabar_k|, \qquad \text{on } Q.
\label{eq:rightcarl}
\end{align}

For the LHS of \eqref{eq:carlA}, we examine $\wbar_k$ on $\Gamma_k$. 
Define $\fbar_{0,k} = f_{0,k} - f_{0,k}'$. For $x \in \Bbar$, we have
\begin{align}
\wbar_k(x,\alpha_k(x)) & =  w_k(x,\alpha_k(x)) - w'_k(x, \alpha_k(x) - \alphabar_k(x))
= u_k(x,\alpha_k(x)) - u_k'(x,\alpha_k'(x))
\nn
\\
& = f_{0,k}(x) - f_{0,k}'(x) = \fbar_{0,k}(x).
\label{eq:uffp0}
\end{align}
From  \eqref{eq:f0Ade} and \eqref{eq:transportA} we have 
\begin{align*}
2 (\nabla \alpha_k)^T A (\nabla f_{0,k}) + ((\calE-c) \alpha_k) f_{0,k} & =0, \qquad \text{on } \Bbar,
\\
2 (\nabla \alpha_k')^T A' (\nabla f'_{0,k}) + ((\calE'-c) \alpha_k') f_{0,k}' &=0, \qquad \text{on } \Bbar.
\end{align*}
Using these equations and some algebraic manipulations (see subsection \ref{subsec:standard}) one obtains
\begin{align}
\left ( 2 (\nabla \alpha_k)^T A \,\nabla  + ((\calE-c) \alpha_k) \right ) & ( \wbar_k(x, \alpha_k(x)) )
= \left ( 2 (\nabla \alpha_k)^T A \,\nabla  + ((\calE-c) \alpha_k) \right )( \fbar_{0,k})
\nn
\\
& = - \calP_k \alphabar_k - 2 (\nabla \alpha_k')^T \Abar (\nabla f_{0,k}') - (\abar_{ij} \pa_i \pa_j \alpha_k') f_{0,k}',
\label{eq:firstalphabar}
\end{align}
where $\calP_k$ is the elliptic operator
\[
\calP_k :=  f_{0,k}' \,(\calE -c) + 2 (\nabla f_{0,k}')^T A \, \nabla.
\]
Note that $f_{0,k}'(x)>0$ on $\Bbar$ because, from \eqref{eq:transport} and \eqref{eq:f0Ade},
$f'_{0,k}$ is the solution of a homogeneous linear first order ODE (on a geodesic) with a positive initial condition. 
Hence, using \eqref{eq:firstalphabar} and \eqref{eq:mudef}, we have
\begin{align}
\| \wbar_k\|_{1,\Gamma_k,\sigma}^2 & \cgeq \int_B e^{2 \sigma \vphi_k(x, \alpha_k(x))}
\left ( | (\calP_k \alphabar_k)(x)|^2 - |\Abar(x)|_\infty^2 \right ) \, dx
\nn
\\
& = \int_B e^{2 \sigma \mu(x))}
\left ( | (\calP_k \alphabar_k)(x)|^2 - |\Abar(x)|_\infty^2 \right ) \, dx,
\label{eq:GammaP}
\end{align}
with the constant independent of $\sigma$.

Therefore using \eqref{eq:rightcarl} and \eqref{eq:GammaP} in \eqref{eq:carlA}, we obtain
\[
\| \calP_k \alphabar_k\|_{0,\sigma, \Gamma} - \| |\Abar|_\infty \|_{0,\sigma, \Gamma}
\cleq \| [ \nabla \alphabar_k, (\calE - c)\alphabar_k] \|_{0,\sigma,Q} + e^{2 \sigma \vphi_H} 
\| [ \nabla \alphabar_k, (\calE - c)\alphabar_k]\|_{0,Q},
\]
for large enough $\sigma$. Since $\calP_k$ is elliptic and has the same\footnote{up to a multiplication by a positive function} principal symbol as $\calE$, $\mu$ is strongly pseudoconvex w.r.t $\calP_k$ on $\Bbar$. Hence, applying Proposition \ref{prop:conseq}, we obtain
\begin{align}
 \sigma \int_B e^{2 \sigma \mu} (|\nabla \alphabar_k|^2 + \sigma^2 |\alphabar_k|^2)
\cleq \int_B e^{2 \sigma \mu} |\Abar|_\infty^2,
\qquad k=1, \cdots, N,
\label{eq:kest}
\end{align}
for large enough $\sigma$. It remains to convert this estimate of
$\alphabar_k$ to an estimate of $\Abar$.

For $n \times n$ real matrices $M_1, M_2$, let $M_1 \odot M_2$ denote
the sum of the entries of the Hadamard product $M_1 \odot M_2$; $M_1 \odot M_2$ 
is the usual inner product on the vector space
of real matrices. If $v$ is a column vector in $\R^n$ and $M$ a symmetric matrix with real entries then
$(vv^T) \odot M = v^TMv$.

By hypothesis, for each $x \in \Bbar$, $(\nabla \alpha_k(x)) (\nabla \alpha_k(x))^T $, $k=1, \cdots, N$ span the space 
of symmetric real matrices. Hence, for every non-zero symmetric real matrix $M$, we have
\[
\sum_{k=1}^N | \nabla \alpha_k(x)^T M \, \nabla \alpha_k(x)| 
= \sum_{k=1}^N | \nabla \alpha_k(x) (\nabla \alpha_k(x) )^T \odot M| > 0.
\]
Hence, using continuity and the compactness of $\Bbar \times \{M : M \odot M =1 \}$, we have
\[
\sum_{k=1}^N | \nabla \alpha_k(x)^T M \, \nabla \alpha_k(x)|  \cgeq  \sqrt{M \odot M} \cgeq |M|_\infty,
\qquad x \in \Bbar,
\]
with the {\bf constant independent of $M$}. So
\begin{align}
\sum_{k=1} | (\nabla \alpha_k)^T \Abar \, \nabla \alpha_k|  \cgeq  |\Abar|_\infty,
\qquad \text{on }  \Bbar,
\label{eq:spanAalpha}
\end{align}
with the constant independent of $x \in \Bbar$.
Now $(\nabla \alpha_k)^T A (\nabla \alpha_k)=1$ and
$(\nabla \alpha'_k)^T A' (\nabla \alpha'_k)=1$, so
\begin{align*}
(\nabla \alpha_k)^T \Abar (\nabla \alpha_k)
& = (\nabla \alpha_k)^T A (\nabla \alpha_k) - (\nabla \alpha_k)^T A' (\nabla \alpha_k)
\\
& = 1 - (\nabla \alphabar_k)^T A' (\nabla \alpha_k)
- (\nabla \alpha'_k)^T A' (\nabla \alpha_k)
\\
& = 1 - (\nabla \alphabar_k)^T A' (\nabla \alpha_k)
- (\nabla \alpha'_k)^T A' (\nabla \alphabar_k) - (\nabla \alpha'_k)^T A' (\nabla \alpha'_k)
\\
&=  - (\nabla \alphabar_k)^T A' (\nabla \alpha_k)
- (\nabla \alpha'_k)^T A' (\nabla \alphabar_k).
\end{align*}
Hence, using \eqref{eq:spanAalpha}, we have
\begin{align}
|\Abar|_\infty & \cleq \sum_{k=1}^N   |\nabla \alphabar_k | + |\alphabar_k|,
\qquad \text{on } \Bbar,
\label{eq:Aalphaest}
\end{align}
with the constant independent of $x \in \Bbar$.
Using \eqref{eq:Aalphaest} in \eqref{eq:kest}, for large enough $\sigma$,
we obtain
\[
 \sigma \int_B e^{2 \sigma \mu}  | \Abar|_\infty^2 \leq \int_B e^{2 \sigma \mu}  | \Abar|_\infty^2,
\]
therefore $\Abar=0$ on $\Bbar$, and we have proved the theorem.

%%%%%%%%%%%%%%%%%%%
%%%%%%%%%%%%%%%%%
\appendix

\section{Appendix}

%%%%%%%%
%%%%%%%%%

\subsection{Proof of Proposition \ref{prop:uforward}}\label{subsec:uforward}

We prove Proposition \ref{prop:uforward} for $U_c$ and the proof for $U_{A,\omega}$ is
similar so we do not give its proofs. Note that $\L$ is {\em normally hyperbolic} as defined in section 1.5 of 
\cite{bgp07} as 
its principal symbol is associated with the Lorentzian metric $-dt^2 + a_{ij}(x) dx_i dx_j$ on $\R^n \times \R$ 
with $\pa_t$ as the global future directed timelike vector field. Further, $\L$ is {\em globally hyperbolic}
(see Section 2 and Proposition 2.3 of \cite{ors26a} for several equivalent definitions) because $t=c$ is a smooth 
spacelike Cauchy surface for this Lorentzian metric. Hence, from \cite[Corollary 3.4.3]{bgp07}, $\L$ has advanced and retarded Green's operators $G_+, G_-$ as defined in \cite[Definition 3.4.1]{bgp07}. Therefore 
\cite[Theorem 3.8]{bar15} implies the existence of the linear extensions $\Gbar_+, \Gbar_-$, hence
\cite[Lemma 4.1]{bar15} holds for $\L$.
To keep the notation simple, we do not display the dependence on $c,g,\omega$, so $U_c, \Lambda_{g,\omega}$ and 
$\alpha_{g,\omega}$ are written as $U, \Lambda$ and $\alpha$.

We prove (a) with \eqref{eq:Ucic} replaced by 
\begin{equation}
U(x,t) = \delta(t-x \cdot \omega), \qquad \text{on } \R^n \times (-\infty, -4).
\label{eq:Ucic4}
\end{equation}
Later we show why this is equivalent to the original statement. 

Let $\chi(t)$ be a smooth function on $\R$ with
\[
\chi(t) = \begin{cases} 1 & t < -3 \\ 0 & t \geq - 2 \end{cases}.
\]
A distribution $U$ on $\R^n \times \R$ is a solution of \eqref{eq:Ucde}, \eqref{eq:Ucic4} iff 
$W = U - \chi(t) \delta(t-x \cdot \omega)$ is a solution of
\begin{subequations}
\begin{align}
\L W = f, & \qquad \text{on } \R^n \times \R,
\label{eq:wcde}
\\
W =0, & \qquad \text{on } \R^n \times (-\infty, -4),
\label{eq:wcic}
\end{align}
\end{subequations}
where
\[
f = - \L (\chi(t) \delta(t- x \cdot \omega) ).
\]
Now $\chi(t) \delta(t - x \cdot \omega)$ is supported in the region $ \{ (x,t) \in \R^n \times \R : x \cdot \omega \leq t \leq -2 \}$
and this region does not intersect $\Bbar \times \R$. Hence $\L = \Box$ on this region so, noting $\Box \delta(t- x \cdot \omega) =0$ on $\R^n \times \R$, we have
\[
f = - [\Box, \chi(t)] \delta (t- x \cdot \omega),
\]
so $f$ is supported in
\[
M := \{ (x,t) \in \R^n \times \R :  \max(-3, x \cdot \omega) \leq t \leq -2\}.
\]
Hence $f$ is a past compact (see \cite{bgp07} for the definition) distribution on $\R^n \times \R$. So, applying \cite[Lemma 4.1]{bar15}, 
\eqref{eq:wcde}has a solution $W$ which is a distribution on $\R^n \times \R$ with
\[
\tsupp (W) \subset J^+( M) \subset \R^n \times [-3, \infty),
\]
so \eqref{eq:wcic} holds. Further, because of \eqref{eq:wcic}, any solution of \eqref{eq:wcde}, \eqref{eq:wcic} will be 
past compact hence, by \cite[Corollary 4.2]{bar15}, \eqref{eq:wcde}, \eqref{eq:wcic} has at most one distributional solution. Hence \eqref{eq:Ucde}, \eqref{eq:Ucic4} has a unique distributional solution.

We now prove the claim about the support of $U$. One may quickly check that the support of 
$ \chi(t) \delta (t- x \cdot \omega) $ is a subset of
\[
 Q_\alpha:= \{ (x,t) \in \R^n \times \R : \alpha(x) \leq t \}.
\]
So $\tsupp(U) = \tsupp (W + \chi(t) \delta(t- x \cdot \omega)) \subset  J_+(M) \cup Q_\alpha$. Now 
\[
M \subset N := \{ (x,t) \in \R^n \times \R : x \cdot \omega \leq t \leq -1 \},
\]
so it is enough to show that $J_+(N) \subset Q_\alpha$. 

We first show that
\[
J_+(N) \cap \{ x \cdot \omega \leq -1 \} = Q_\alpha \cap \{ x \cdot \omega \leq -1 \}.
\]
$ \cap \{ x \cdot \omega \leq -1 \}$. Then $t_0 \geq x_0 \cdot \omega$

Suppose $(x_0,t_0) \in N$ and $(x_1, t_1) \in J_+(N)$. There is a curve $t \in [t_0, t_1] \to \gamma(t) \in \R^n$ from $
x_0$ to $x_1$ with $\|\gammadot(t)\| \leq 1$. Hence
\[
t_1 - t_0 \geq \int_{t_0}^{t_1} \|\gammadot(t)\| \, dt \geq d(x_0, x_1).
\]
If $x_1$ is also in the region $x_1 \cdot \omega \leq -1$, the shortest distance between $x_1, x_0$ will either be 
attained by a geodesic staying always in the region $x \cdot \omega \leq -1$ or by a geodesic which goes into the region $x \cdot \omega \geq -1$. In either case
\[
d(x_0, x_1) \geq | x_1 \cdot \omega - x_0 \cdot \omega| \geq x_1 \cdot \omega - x_0 \cdot \omega.
\]
Hence
\[
t_1 - x_1 \cdot \omega \geq t_0 - x_0 \cdot \omega \geq 0.
\]
If $x_1$ lies in the region $x \cdot \omega >-1$ then
\[
d(x_0, x_1) \geq \min_{a \in P_\omega} d(a, x_1) + \min_{a \in P_\omega} d(a, x_0)
= \alpha(x_1) - x_0 \cdot \omega
\]
hence $t_1 - t_0 \geq \alpha(x_1) - x_0 \cdot \omega$, so
\[
t_1 - \alpha(x_1) \geq t_0 - x_0 \cdot \omega \geq 0.
\]

To prove the existence of the solution of the original IVP \eqref{eq:Ucde}, \eqref{eq:Ucic}, we just repeat the proof of
the existence for the \eqref{eq:Ucde}, \eqref{eq:Ucic4} case but with $\chi=1$ on the region $t \leq -1$ and zero for
$t \geq 0$. Everything goes through as before. {\em The only reason we replaced \eqref{eq:Ucic} by \eqref{eq:Ucic4}
was to prove the support property.} Now any any solution of the IVP \eqref{eq:Ucde}, \eqref{eq:Ucic} is also a solution
of \eqref{eq:Ucde}, \eqref{eq:Ucic4}, so the uniqueness result proves that the solutions corresponding to the two different initial conditions are the same.

Now we determine $\tWF(U)$. Since \eqref{eq:Ucde} is a homogeneous equation, $\tWF(U)$ is contained in
the characteristic set of $\L$. Every element in the characteristic set of $\L$ lies on a null bicharacteristic, and every null bicharacteristic enters the region $\{ t<-1\}$. By the propagation of singularities theorem
(see \cite[Theorem 26.1.1]{hor85}), $\tWF(U)$ is invariant under the bicharacteristic flow associated with $\L$. 
Now $U = \delta(t- x \cdot \omega)$ for $t<-1$ so 
\begin{align*}
\tWF(U|_{t<-1}) &= \{ [ x, x \cdot \omega; - \tau \omega, \tau] : 
x \in \R^n, ~ x \cdot \omega < -1, ~ \tau \in \R, ~ \tau \neq 0\};
\\
& = \{ [a + (t+1) \omega; - \tau \omega, \tau] : a \in A, ~ t < -1, ~ \tau \in \R, ~ \tau \neq 0 \}
\\
& = \Lambda \cap \{ t<-1 \}.
\end{align*}
Hence $\tWF(U)$ is the flow out of $\tWF(U|_{t<-1})$ under the bicharacteristic flow of $\L$, which is exactly
$\Lambda$, by definition.

Finally, we show that $U \in H^{-1}_{loc}(\R^n \times \R)$. Since 
\[
U(x,t) = \delta(t-x \cdot \omega)= \pa_t ( H(t- x \cdot \omega) ), \qquad \text{for } t<-1,
\]
we see that $U |_{t<-1}\in H^{-1}_{loc}(\R^n \times (-\infty,-1))$. So, using \cite[Chapter VI, Theorem 2.1]{taylor1981} - the 
$H^s$ propagation of singularities theorem, using \cite[Chapter VI, Proposition 1.10]{taylor1981} - the regularity result 
for the elliptic directions, and imitating the argument in the previous 
paragraph, we conclude that $U \in H^{-1}_{loc}(\R^n \times \R)$.

%%%%%%%%%%%
%%%%%%%%%%%%

\subsection{Proof of Proposition \ref{prop:eictransfer}}\label{subsec:eictransfer}

By hypothesis, $U_{A,\omega} = U_{A',\omega}$ on $\pa B \times (-\infty, T_0)$ and $\L = \L' = \Box$ on
$(\R^n \setminus B) \times (-\infty, T_0)$. So, by \cite[Proposition A.1]{ors26a}, we have $U_{A,\omega} = U_{A',\omega}$
on $(\R^n \setminus \Bbar) \times (-\infty, T_0)$. Hence
\[
\tWF(U_{A,\omega})|_{(\R^n \setminus \Bbar) \times (-\infty, T_0)}
= 
\tWF(U_{A',\omega})|_{(\R^n \setminus \Bbar) \times (-\infty, T_0)}
\]
so, using Proposition \ref{prop:uforward}, for $g=A^{-1}, g'=A'^{-1}$,
we have
\[
\Lambda_{g,\omega}|_{(\R^n \setminus \Bbar) \times (-\infty, T_0)} = 
\Lambda_{g',\omega}|_{(\R^n \setminus \Bbar) \times (-\infty, T_0)}.
\]
We do not claim (yet) the above relation at points on $\pa B \times (-\infty, T_0)$ because 
$U_{A,\omega} = U_{A',\omega}$ on $\pa B \times (-\infty, T_0)$ only gives the equality of the wave front sets
of the traces, hence there is an ambiguity about the sign of the outward normal component of velocity of the geodesic 
at a boundary point. 

From the definition of $\Lambda_{g,\omega}$ and $\Lambda_{g',\omega}$ (as a subset of $T\R^{n+1}$) we see that 
for any $x \in \R^n \setminus \Bbar$, we have $x = \gamma_{p,g,\omega}(r)$ for some $p \in P_\omega$, $r<T_0$ iff 
$x=\gamma_{p',g',\omega}(r)$ for some $p' \in P_\omega$ and, for such $x$, 
$\gammadot_{p,g,\omega}(r) = \gammadot_{p',g',\omega}(r)$.  Hence
\[
D_{g,\omega} \setminus \Bbar = D_{g',\omega} \setminus \Bbar.
\]
Further, since $(g,\omega,T_0)$ satisfies the EIC, for every $x \in D_{g',\omega} \setminus \Bbar$ there is a unique
$p \in P_\omega$ and an $r<T_0$ such that $x=\gamma_{p,g',\omega}(r)$, and
\[
\{ x \in \R^n : x \cdot \omega \leq1 \} \setminus \pa B \subset D_{g', \omega}.
\]
Also, examining the singular supports of $U_{g,\omega}|_{\pa B \times \R} = U_{g',\omega}|_{\pa B \times \R}$ and
that $(g,\omega,T_0)$ satisfies the EIC, we see that $\pa B \subset D_{g',\omega}$, hence
\[
D_{g,\omega} \setminus B = D_{g',\omega} \setminus B.
\]

Now we resolve the ambiguity of the sign of the outward normal component of $\gamma_{p,g',\omega}$ at points
on $\pa B$. Any geodesic $\gamma_{p,g',\omega}$ which reaches a point $x \in \pa B$, must either have 
velocity $\omega$ at $x$ and $x \cdot \omega \leq 0$, or the velocity at $x$ must have a positive outward normal 
component. At points $x \in \pa B$ with $x \cdot \omega \leq 0$, $\gammadot_{p,g',\omega}$
cannot have a positive outward normal component because then $\gammadot_{g',p,\omega}$
would be different from $\omega$ at points near $x$
in $\R^n \setminus \Bbar$. This would contradict the claim in the previous paragraph because, by the EIC,
$\gamma_{p,g,\omega}$ has velocity $\omega$ at all points in the region $x \cdot \omega \leq 0$.
Hence $U_{A,\omega} = U_{A',\omega}$ on $\pa B \times (-\infty, T_0)$ implies
\[
\Lambda_{g,\omega}|_{\pa B \times (-\infty, T_0)} = 
\Lambda_{g',\omega}|_{\pa B\times (-\infty, T_0)},
\]
and arguing as before one can show that, for each $x \in \pa B$, there is exactly one $p \in P_\omega$ and $r<T_0$ such 
that $x = \gamma_{p,g',\omega}(r)$. This completes the proof of the claim that $(g',\omega, T_0)$ has the EIC property,
 $D_{g,\omega} = D_{g',\omega}$, and if 
\[
\gamma_{p,g,\omega}(r) = \gamma_{p',g',\omega}(r') \in D_{g,\omega} \setminus B,
\]
for some $p,p' \in P_\omega$ and $r,r'<T_0$, then $r=r'$ and 
$\gammadot_{p,g,\omega}(r) = \gammadot_{p',g',\omega}(r')$. Hence, from Proposition \ref{prop:exinj}),
\[
\alpha_{g,\omega} = \alpha_{g',\omega} \qquad \text{on } D_{g,\omega} \setminus B.
\]
We now show that $\alpha_{g,\omega} = \alpha_{g',\omega}$ on $\R^n \setminus B$.

Since $D_{g,\omega} = D_{g',\omega}$ contains the region $x \cdot \omega \leq 1$, it remains to show the equality of 
the $\alpha's$ only at points $x_0$ in the region $x \cdot \omega >1$. Define
\[
r_0 := \alpha_{g,\omega}(x_0).
\]
From \cite{ors26a}, we know there is a $p \in P_\omega$ so that
\[
\gamma_{p,g,\omega}(r_0) = x_0,
\]
and, from connectedness, there is a $q$ on $x\cdot \omega =1$ and an $r_1 < r_0$ so that
\[
\gamma_{p,g,\omega}(r_1) = q.
\]
Further, from the definition of $\alpha_{g,\omega}$, we have $r_1 < T_0$. Let $\sigma_1$ be the part of
$\gamma_{p,g,\omega}$ from $q$ to $x_0$. From the EIC property, $\sigma_1$ lies in the region 
$x \cdot \omega \geq 1$.
Hence the length of $\sigma_1$, in the metrics $g$ and $g'$ is $r_0-r_1$. 
Since $\alpha_{g,\omega}(q) = \alpha_{g',\omega}(q)$, there is a $p' \in P_\omega$ such that
\[
\gamma_{p',g',\omega}(r_1) = q.
\]
Let $\sigma_1'$ be the curve, consisting of the part of $\gamma_{p',g',\omega}$ from $p'$ to $q$. Then,
the length of $\sigma_1'$ in the $g'$ metric is $r_1+1$. Let $\sigma$ be the curve from 
$p' \in P_\omega$ to $x_0$ consisting of
$\sigma_1'$ followed by $\sigma_2$; then the length of $\sigma$ in the $g'$ metric is $(r_1+1) + (r_0 - r_1) = r_0+1$.
Hence, by definition,
\[
\alpha_{g',\omega}(x_0) \leq (r_0 + 1) -1 = \alpha_{g,\omega}(x_0).
\]
Reversing the roles of $g$ and $g'$, we get the reverse inequality, hence 
$\alpha_{g',\omega}(x_0) = \alpha_{g,\omega)}(x_0)$.

%%%%%%%%
%%%%%%%%%%

\subsection{The detailed structure of $U_c$ and $U_{A,\omega}$ under the EIC}\label{subsec:progressing}

To keep the expressions intelligible, we do not display the dependence on $g$ and $\omega$, 
so $\alpha_{g,\omega}, D_{g,\omega}$ and $\T_{g,\omega}$ are written as $\alpha, D, \T$

%%%%%%%%%%%%%%%%%%%%%%%%%
\begin{prop}[The structure of $U_c$ and $U_{A,\omega}$]\label{prop:ualpha}
Suppose $\L$ is an admissible operator, $(g=A^{-1},\omega,T_0)$ satisfies the EIC, and $m$ is a positive integer. 
\vspc
\begin{enumerate}[(a)]
\item If $U_c$ is the solution of \ref{eq:Ucde}, \ref{eq:Ucic} then
\[
U_c(x,t) = f_{-1}(x) \, \delta(t-\alpha(x)) + 
\sum_{i=0}^m f_i(x) K_i(t-\alpha(x)) + R_m(x,t), \qquad (x,t) \in D \times \R, 
\]
for some $f_i \in C^\infty(D)$, $R_m(x,t) \in H^{m+1}(\Omega)$ for every open $\Omega$ compactly contained
in $D \times \R$. The $f_i$ are the solutions of the IVP
\begin{subequations}
\begin{align}
(2 \T  + (\calE -c)\alpha))f_{-1} =0 ~~ \text{on } D, & \qquad f_{-1}(x)= 1 ~~ 
\text{when } x \cdot \omega \leq -1,
\label{eq:f1cde}
\\
(2\T + (\calE -c) \alpha) f_i = \calE f_{i-1}  ~~ \text{on } D, & \qquad f_i(x) =0 ~~ 
\text{on } x \cdot \omega \leq -1, ~~ i=0, \cdots, m.
\label{eq:ficde}
\end{align}
\end{subequations}
Further
\[
U_c(x,t) = f_{-1}(x) \, \delta(t-\alpha(x)) + u_c(x,t) \,H(t-\alpha(x)), \qquad (x,t) \in D \times \R, 
\]
with $u_c$ a smooth function on the region $D \times \R$ satisfying
\begin{subequations}
\begin{align}
\L u_c =0, & \qquad \text{for } x \in D, ~\alpha(x) \leq t,
\label{eq:ucde}
\\
(2\T + (\calE-c) \alpha)u_c = \calE f_{-1} , & \qquad \text{for  }  x \in D, ~ t=\alpha(x),
\label{eq:uccc}
\\
u_c = 0, & \qquad \text{for } x \in D, ~\alpha(x) \leq t < -1.
\label{eq:ucic}
\end{align}
\end{subequations}
\item
If $U_{A,\omega}$ is the solution of \eqref{eq:UAde}, \eqref{eq:UAic} then 
\[
U_{A,\omega}(x,t) =  \sum_{i=0}^m f_i(x) K_i(t-\alpha(x)) + R_m(x,t), \qquad (x,t) \in 
D \times \R, 
\]
for some $f_i \in C^\infty(D)$, $R_m(x,t) \in H^{m+1}(\Omega)$ for every open $\Omega$ compactly contained
in $D \times \R$.  The $f_i$ are the solutions of the IVP
\begin{subequations}
\begin{align}
(2\T + (\calE-c) \alpha) f_0 =0 ~~ \text{on } D, & \qquad f_0(x)= 1 ~~ \text{when } x \cdot \omega \leq -1,
\label{eq:f0Ade}
\\
(2\T +  (\calE-c) \alpha) f_i = \calE f_{i-1}  ~~ \text{on } D, & 
\qquad f_i(x) =0 ~~ \text{when } x \cdot \omega \leq -1, \qquad i=1, \cdots, m.
\label{eq:fiAde}
\end{align}
\end{subequations}
Further
\[
U_{A,\omega}(x,t) = u_{A,\omega}(x,t) H(t - \alpha(x) ), \qquad (x,t) \in D \times \R,
\]
with $u_{A,\omega}$ a smooth function on the region $D \times \R$ satisfying
\begin{align}
\L u_c =0,  \qquad & \text{for } x \in D, ~\alpha(x) \leq t,
\label{eq:uAde}
\\
u_{A,\omega}(x, \alpha(x)) = f_0(x), ~~ x \in D, \qquad & u_{A,\omega}(x,t) = 1 ~~\text{when~} ~\alpha \leq t < -1.
\label{eq:uAcc}
\end{align}
\end{enumerate}
\end{prop}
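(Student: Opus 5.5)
The plan is a progressing wave construction, carried out on $D \times \R$ where, by Proposition \ref{prop:exinj}, $\alpha$ is smooth, solves the eikonal equation $a_{ij}\pa_i\alpha\,\pa_j\alpha = 1$, and $\T$ is differentiation along the geodesics $\gamma_{p}$. For (a), first compute, for smooth $f$ on $D$ and $i \geq -2$,
\[
\L\bigl(f(x) K_i(t-\alpha(x))\bigr) = (1 - a_{ij}\pa_i\alpha\,\pa_j\alpha) f K_{i-2} + \bigl(2\T f + ((\calE-c)\alpha) f\bigr) K_{i-1} - (\calE f) K_i .
\]
The first term vanishes by the eikonal equation. Substituting the ansatz $f_{-1}\delta + \sum_{i=0}^m f_i K_i$ and collecting the coefficients of $K_{-2}, K_{-1}, \dots, K_{m-1}$ gives exactly the transport equations \eqref{eq:f1cde}, \eqref{eq:ficde}. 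What remains is the error $\L(\text{sum}) = -(\calE f_m) K_m(t-\alpha)$.

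Next I solve the transport equations. By \eqref{eq:transport}, each is a linear first order ODE along $r \mapsto \gamma_{p}(r)$, and $\Phi$ is a diffeomorphism of $P_\omega \times (-\infty,T_0)$ onto $D$. The ODE therefore has a unique smooth solution on $D$ once data is prescribed on $x\cdot\omega \le -1$. There $\alpha = x\cdot\omega$, $\T = \omega\cdot\nabla$, and $\calE - c$ annihilates the linear function $\alpha$ (because $A = I$ and $b = 0$ outside $B$). So $f_{-1} = 1$ and $f_i = 0$ are consistent there. Smoothness in $x$ follows from smooth dependence of ODE solutions on $p$ composed with $\Phi^{-1}$.

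To produce $R_m$, cut off: let $U_m$ be the sum of the expansion terms. I would show that $R_m := U_c - U_m$, restricted to $D\times\R$, lies in $H^{m+1}$ locally. The source $\L R_m = (\calE f_m)K_m(t-\alpha)$ belongs to $H^{m}_{loc}$ and is supported in $t\ge\alpha$. On $\{t<-1\}$, the exact solution equals $\delta(t-x\cdot\omega)$, which matches $U_m$ where $x\cdot\omega\le-1$. Hence $R_m$ vanishes there. The main obstacle is that $D$ is not a union of full backward light cones, so plain global energy estimates do not apply directly. I would handle this by microlocal propagation of $H^s$ regularity (\cite[Ch.~VI, Thm.~2.1]{taylor1981}). $\tWF(R_m)\subset\Lambda$ by Proposition \ref{prop:uforward}, and every bicharacteristic of $\Lambda$ over $D\times\R$ lies over a geodesic $\gamma_p$ that stays in $D$ backward in time (since $D$ is the image of $P_\omega\times(-\infty,T_0)$) and reaches $t<-1$, where $R_m = 0$. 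Regularity $H^{m+1}$ therefore propagates forward along these bicharacteristics, and elliptic regularity handles the other directions. A cutoff in $t$ keeps everything compactly supported.

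The support statement and the $u_c$ description then follow. $U_c$ is supported in $t\ge\alpha$ by Proposition \ref{prop:uforward}, and so are the expansion terms. Setting $u_c := U_c - f_{-1}\delta(t-\alpha)$ on $t>\alpha$ gives a function smooth up to $t=\alpha$, since it agrees with $\sum f_i K_i + R_m$ for every $m$. It extends smoothly across $t = \alpha$, and its Taylor coefficients at $t=\alpha$ are the $f_i$. Consequently $\L u_c = 0$ on $t\ge\alpha$. Equation \eqref{eq:uccc} is the $i=0$ transport equation for $u_c|_{t=\alpha} = f_0$, and $u_c = 0$ for $t<-1$. Part (b) is identical with the expansion starting at $K_0$ (the Heaviside function), so $f_0$ takes the role of $f_{-1}$. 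Here $u_{A,\omega}(x,\alpha(x)) = f_0$, and $u_{A,\omega} = 1$ in the initial region.
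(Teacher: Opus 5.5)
Your argument agrees with the paper's everywhere except in how the remainder is controlled. The ansatz, the cancellation of the $K_{-2}$ coefficient by the eikonal equation, the transport ODEs along $\gamma_p$ solved through the diffeomorphism $\Phi$, and the extraction of $u_c$ (or $u_{A,\omega}$) are the same. You also check that the data prescribed on $x\cdot\omega\le -1$ is consistent with the transport equations there, which the paper leaves implicit.

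For $R_m$ the paper globalizes. It multiplies $U_m$ by a cutoff $\psi(t)$, equal to $1$ for $t\le T_0-2\ep$ and $0$ for $t\ge T_0-\ep$, so that $\psi U_m$ is a distribution on all of $\R^n\times\R$. It then writes $U-\psi U_m=r_m'+r_m''$:
\begin{itemize}
\item $r_m'$ carries the error $\psi\,(\calE f_m)K_m(t-\alpha)\in H^m_{loc}$ and lies in $H^{m+1}_{loc}$ by global well-posedness;
\item $r_m''$ carries the commutator $[\pa_t^2,\psi]U_m$, which is conormal and supported in $T_0-2\ep\le t\le T_0-\ep$, and is shown to be smooth on $D_\ep\times\R$ by propagation of singularities.
\end{itemize}
Finally it lets $\ep\to 0$.

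You instead apply the $H^s$ propagation theorem directly to $R_m$ on the open set $D\times\R$. You use $\L R_m\in H^m_{loc}(D\times\R)$, $R_m=0$ for $t<-1$, and $\tWF(R_m)\subset\Lambda$. Your route is shorter and never needs $U_m$ outside $D$. The paper's route handles the finite-regularity source by a global energy estimate, so it uses microlocal analysis only to show that a time-localized term creates no singularities in $D_\ep\times\R$.

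Both routes rest on the same geometric input and share one limitation. Your justification that the backward ray stays in $D$ (``since $D$ is the image of $P_\omega\times(-\infty,T_0)$'') covers only points $(\gamma_p(t),t)$ with $t<T_0$. Nothing in the EIC prevents a ray from re-entering $D$ after time $T_0$. The paper's remark that $\alpha$ increases along these geodesics is limited in exactly the same way.

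This is harmless for the later applications. Proposition \ref{prop:exinj}, together with the straightness of rays outside $B$, gives $\gamma_p(t)\cdot\omega>1$ for all $p\in P_\omega$ and all $t>T_1$. Hence every point of $\Lambda$ over $\{x\cdot\omega\le 1\}\supset\Bbar$ has $t\le T_1$, and your argument applies there verbatim.
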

\vspc
\noindent
Observe that $f_{-1}(x)$ is independent of $c$ because $\T$ and $\calE -c$ are independent of $c$. 
Also, the uniqueness of $U_c, U_{A,\omega}$ implies that 
$u_c, u_{A,\omega}$ are uniquely determined on the region $\{(x,t) : x \in D, ~\alpha(x) \leq t \}$.

%%%%%
%%%%%%
\noindent
\underline{Proof of Proposition \ref{prop:ualpha}}

We prove the part of Proposition \ref{prop:ualpha} associated with the solution $U_c$ of \eqref{eq:Ucde}, 
\eqref{eq:Ucic}. The proof for the solution $U_{A,\omega}$ is almost identical and we do not give that proof. 
For the proof for the $U_c$ case, since we work with 
a fixed $A,b,c$ and $\omega$ and $g=A^{-1}$, to keep the presentation intelligible, our notation does not display 
the dependence on $A,b,c,g,\omega$. 

The distributions $K_i(t- \alpha(x))$ are defined on $\R^n \times \R$ for $i \geq -1$, but their derivatives are complicated
since $\alpha(x)$ is not smooth on $\R^n$. Now $(A,\omega,T_0)$ satisfies the EIC so, as shown in 
Section \ref{sec:intro}, $\alpha(x)$ is a smooth function on $D$. Hence, on $D \times \R$, one can compute
the derivatives of the distributions $K_i(t - \alpha(x))$ using the chain rule.

We aim to express $U$ over $D \times \R$ in the form of a finite progressing wave plus a smooth enough function. 
Fix a positive integer $m$ and define
\begin{equation}
U_m(x,t) = \sum_{k=-1}^{m} f_k(x) K_k (t- \alpha(x)), \qquad (x,t) \in \R^n \times \R.
\label{eq:Umguess}
\end{equation}
A standard calculation (see subsection \ref{subsec:standard}) shows that, on $D \times \R$, we have
\begin{align}
(\L U_m)(x,t) & = 
 ( 2 \T + (\calE-c) \alpha ) f_{-1})(x) \, \delta'(t - \alpha(x)) - (\calE f_m)(x) \, K_m(t-\alpha(x))
 \nn
 \\
& \qquad + \sum_{k=0}^{m-1}  ( (2\T + (\calE -c) \alpha) f_k - \calE f_{k-1}) (x) \, K_{k-1}(t - \alpha(x) ).
\label{eq:LUmalpha}
\end{align}
We choose the $f_i(x)$ to be solutions of the IVPs below. 
\begin{subequations}
\begin{align}
(2\T + (\calE -c) \alpha) f_{-1} =0 ~~ \text{on } D, & \qquad f_{-1}(x ) = 1~~ \text{on } x \cdot \omega \leq -1,
\label{eq:bminus1de}
\\
(2\T + (\calE -c) \alpha) f_k = \calE f_{k-1}  ~~ \text{on } D, & \qquad f_k(x) =0 ~~ \text{on } x \cdot \omega \leq -1, \qquad i=0, \cdots, m.
\label{eq:bide}
\end{align}
\end{subequations}
These are first order linear ODEs on the geodesics $ r \to \gamma_p(r)$ because of \eqref{eq:transport},
hence have unique smooth solutions on $D$. With these $f_i$, we have
\begin{align*}
(\L U_m)(x,t) = - (\calE f_m)(x) \, K_m(t-\alpha(x)), & \qquad \text{on } D \times \R,
\\
U_m(x,t) = \delta(t-x \cdot \omega), & \qquad \text{for } t < -1.
\end{align*}
Choose an $\ep>0$ small enough so that $T_1 < T_0 - 2 \ep$ and choose a $\psi \in C^\infty(\R)$ with
\[
\psi(t) = \begin{cases} 1, & t \leq T_0 - 2 \ep \\ 0, & t \geq T_0 - \ep \end{cases}.
\]
Hence 
\[
\psi(t) K_i( t- \alpha(x) ) = \begin{cases} K_i( t - \alpha(x)), & \text{if } \alpha(x) \leq T_0- 2 \ep \\ 0, &
\text{if } \alpha(x) \geq T_0 - \ep \end{cases},
\]
so $\psi(t) U_m(x,t)$ is zero on a neighborhood of $(\R^n \setminus D) \times \R$ and
\[
\L  ( \psi(t) U_m(x,t) ) = - \psi(t) \,  ( \calE f_m )\, K_m(t-\alpha(x)) + 
[\pa_t^2, \psi(t)] U_m(x,t),
\qquad \text{on } \R^n \times \R.
\]
Define
\[
r_m(x,t) := U(x,t) - \psi(t) U_m(x,t), \qquad (x,t) \in \R^n \times \R.
\]
Then $r_m(x,t) = r_m'(x,t) + r''_m(x,t)$ where $r_m', r_m''$ are the solutions of the IVP problems
\begin{subequations}
\begin{align}
\L r_m' (x,t)  = - \psi(t) \,  ( \calE f_m ) \, K_m(t-\alpha(x)),
& \qquad \text{on } \R^n \times \R,
\label{eq:rmpde}
\\
r_m'(x,t) =0, & \qquad \text{for } t<-1,
\label{eq:rmpic}
\end{align}
\end{subequations}
and
\begin{subequations}
\begin{align}
\L r_m'' (x,t)  = [\pa_t^2, \psi(t)] U_m(x,t),
& \qquad \text{on } \R^n \times \R,
\label{eq:rmppde}
\\
r_m''(x,t) =0, & \qquad \text{for } t<-1.
\label{eq:rmppic}
\end{align}
\end{subequations}

Now $\psi(t) K_m(t-\alpha(x)) \in H^m_{loc}(\R^n \times \R)$ and the RHS of \eqref{eq:rmpde} is
zero for $t < -1$ because $f_m(x)$ is zero
on the region $\alpha(x) \leq -1$. Hence, from the well-posedness theory, we know that 
$r_m' \in H^{m+1}_{loc}(\R^n \times \R)$. 

Since $\psi'(t)$ is supported in $[T_0-2\ep, T_0 - \ep]$ and $U_m(x,t)$ is a conormal distribution associated with 
the surface 
\[
S := \{(x,\alpha(x)): x \in \R^n \},
\]
the wave front set of the RHS of \eqref{eq:rmppde} is a subset of
\[
N(S) \cap \pi^{-1} (\R^n \times [T_0-2\ep, T_0 - \ep] );
\]
here $N(S)$ is the normal bundle of $S$ and $\pi : T^*(\R^n \times \R) \to \R^n \times \R$ is the usual projection. 
Note that this set uses only the part of $S$ associated with $x \in D$ because we require 
$T_0-\ep \leq \alpha(x) \leq T_0 - 2 \ep$. 
If we identify $T^*(\R^n \times \R)$ with $T(\R^n \times \R)$ through $g$, then the intersection of the wave front set 
of the RHS of \ref{eq:rmppde} with the sphere bundle (of radius 2) is a subset of
\[
\Upsilon := \{ (x, \alpha(x); \nabla_g \alpha(x), 1) : x \in K \},
\]
where
\[
K := \{ x \in \R^n : T_0 - 2 \ep \leq \alpha(x) \leq T_0 - \ep \} \subset D.
\]
Now the normalized null bicharacteristics of $\L$ may be identified with the curves $t \to (\gamma(t), t; \gammadot(t), 1)$
where $\gamma(t)$ is a geodesic of $(\R^n, g=A^{-1})$. Since a geodesic is uniquely determined by a point on it and 
its velocity there, from Proposition \ref{prop:exinj}, a null bicharacteristic of $\L$
intersects $\Upsilon$ only if the geodesic
$\gamma$ is a $\gamma_{g,p,\omega}$ for some $p \in P_\omega$. Further, all these null bicharacteristics
intersect the region $t<-1$ where
$r_m''$ is zero; also $\alpha(x)$ increases along these geodesics. Hence, by the propagation of 
singularities theorem (see \cite[Chapter VI, Theorem 2.1]{taylor1981}), $r_m''$ is smooth on the region 
$D_\ep \times \R$ where
\[
D_\ep := \{ x \in \R^n : \alpha(x) < T_0 - 2 \ep \}.
\]
Therefore $r_m = r_m' + r_m'' \in H^{m+1}_{loc}(D_\ep \times \R)$, so 
$U(x,t) - \psi(t) U_m(x,t) \in H^{m+1}_{loc}(D_\ep \times \R)$.

Now, on $D_\ep \times \R$,
\[
(U - U_m)(x,t) = U(x,t) - \psi(t) U_m(x,t) + (\psi(t) -1) U_m(x,t) = r_m(x,t) + (\psi -1) U_m(x,t).
\]
Since $\psi(t)-1=0$ for $t \leq T_0 - 2 \ep$ and the intersection of the singular support of $U_m(x,t)$ with 
$D_\ep \times \R$ is a subset of $\R^n \times (-\infty, T_0 - 2 \ep)$, we see that $(\psi -1) U_m(x,t)$ is smooth
on $D_\ep \times \R$, hence $U - U_m \in H^{m+1}_{loc}(D_\ep \times \R)$. Since $\ep>0$ can be made
arbitrarily small, we see that  $U-U_m \in H^{m+1}(\Omega)$ for every open $\Omega$ compactly contained in 
$D \times \R$.

Define $R_m := U - U_m$ on $D \times \R$; then 
\[
U = U_m + R_m, \qquad \text{on } D \times \R,
\]
$R_m \in H^{m+1}(\Omega)$ for every open $\Omega$ compactly contained in $D \times \R$, and 
the support of $R_m$ is a subset of
the region $t \geq \alpha(x)$ because $U$ and $U_m$ are supported in this region. 
Also, for future use, we note that
\begin{equation}
\L R_m = \L U - \L U_m = (\calE f_m) K_m(t - \alpha(x)), \qquad \text{on } D \times \R.
\label{eq:LRm}
\end{equation}

For any $m> (n+1)/2 -1$, $R_m \in C^{m+1-(n+1)/2}(D \times \R)$. Define
\[
u(x,t) := \sum_{i=0}^m f_i(x) \frac{(t-\alpha(x))^i}{i!} + R_m(x,t), \qquad (x,t) \in D \times \R;
\]
then $u \in C^{m+1-(n+1)/2}(D \times \R)$. Further,
\[
U(x,t) = f_{-1}(x) \delta(t-\alpha(x)) + u(x,t) H(t-\alpha(x)), \qquad \text{for } x \in D, ~ t \geq \alpha(x),
\]
hence $u$ is uniquely determined on the region $\{(x,t): x \in D, ~ t \geq \alpha(x) \}$. Since this is true for every 
positive integer $m$, $u$ is a smooth function on 
the region $\{(x,t): x \in D, ~ t \geq \alpha(x) \}$ and we can choose a smooth extension of it to $D \times \R$.

Now $\L U=0$ on $\R^n \times \R$ and, on $D \times \R$, $\L (f_{-1} \delta(t-\alpha) )$ is supported on $t=\alpha(x)$. 
Further $u(x,t) H(t-\alpha(x)) = u(x,t)$ on $\{(x,t): x \in D, ~ t > \alpha(x) \}$. Hence
$\L u =0$ on $\{(x,t): x \in D, ~ t >\alpha(x) \}$ so continuity implies $\L u=0$ on the region
$\{(x,t): x \in D, ~ t \geq \alpha(x) \}$. Finally, by definition,
\[
u(x, \alpha(x)) = f_0(x), \qquad x \in D,
\]
so \eqref{eq:uccc} follows from \eqref{eq:ficde} for $i=0$.
%%%%%%%

%%%%%%%%%%%%%%%
%%%%%%%%%%%%%%%%%%

\subsection{Proof of Proposition \ref{prop:aux}}\label{subsec:aux}

%%%%%%%%%

The proof of Proposition \ref{prop:aux} uses the following energy estimates near $t=T$ and near $\Gamma$.
These are derived in the usual manner using multipliers. The terms in the integrals on $\Sigma_+$ are not optimal 
but the estimates are good enough for what we need.
\begin{lemma}\label{lemma:energy}
Suppose $\L$ is an admissible operator on $\R^n \times \R$, $(g=A^{-1},\omega,T_0)$ satisfies the EIC 
and $T>T_0$. For all $f \in C^2(Q_+)$ and all $\sigma>0$, we have the following estimates:
\begin{subequations}
\begin{align}
\int_{H_+} |\nabla_{x,t} f|^2 +  |f|^2 \, dS
& \cleq \int_{Q_+} |\L f|^2   + \int_\Gamma |\nabla_\Gamma f|^2 +  |f|^2 
+  \int_{\Sigma_+} |\nabla_{x,t}f|^2 + |f|^2
\label{eq:fHTest}
\\
\int_\Gamma |\nabla_\Gamma f|^2 + \sigma^2 |f|^2 
& \cleq \int_{Q_+} |\L f| \, |f_t|  + \sigma \int_{Q_+} |\nabla_{x,t} f|^2  + \sigma^2 |f|^2 
 + \int_{\Sigma_+} |\nabla_{x,t}f|^2 + \sigma^2 |f|^2,
\label{eq:fGammaest}
\end{align}
\end{subequations}
with the constants independent of $f$ and $\sigma$.
\end{lemma}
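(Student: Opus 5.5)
We prove both estimates by the multiplier method on $Q_+ = Q \cap \{t \geq \alpha(x)\}$, integrating an energy identity over $Q_+$ and reading off the boundary terms on $H_+$, $\Gamma$ and $\Sigma_+$. The key geometric fact is that $\Gamma=\{t=\alpha(x)\}$ is characteristic for $\L$, because $a_{ij}\pa_i\alpha\pa_j\alpha=1$ by Proposition \ref{prop:exinj}. Hence the energy flux of $f$ through $\Gamma$ involves only tangential derivatives of $f$ along $\Gamma$. Indeed, the conormal to $\Gamma$ is proportional to $(-\nabla\alpha,1)$, and for the multiplier $2f_t$ the flux density is
\[
f_t^2 + a_{ij}\pa_i f\pa_j f - 2 f_t\, a_{ij}\pa_i\alpha\,\pa_j f = a_{ij}(\pa_i f - \pa_i\alpha f_t)(\pa_j f-\pa_j\alpha f_t).
\]
This is a positive semidefinite quadratic form in the derivatives of $f(x,\alpha(x))$ along $\Gamma$. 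The vector fields $\pa_i + \pa_i\alpha\,\pa_t$ are tangent to $\Gamma$ and span its tangent space, so the flux is bounded by $|\nabla_\Gamma f|^2$.

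For \eqref{eq:fHTest} we use the multiplier $2f_t$ together with a zeroth order term, applied on the slab $Q_+\cap\{t\le s\}$ for $s\in[\,\max\alpha, T]$. The divergence identity reads $2f_t\,\L f = \pa_t(f_t^2 + a_{ij}\pa_if\pa_jf) - 2\pa_i(a_{ij}\pa_jf f_t) + (\text{first order terms in } f)\cdot f_t$. The lower order coefficients $b,c$ and $\pa a_{ij}$ produce terms bounded by the energy. Integrating gives
\[
E(s) \cleq \int |\L f|^2 + \int_0^s E + \text{flux}(\Gamma) + \text{flux}(\Sigma_+),
\]
where $E(s)=\int_{\{t=s\}\cap Q_+}(|\nabla_{x,t}f|^2+|f|^2)$. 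The flux through $\Gamma$ is bounded by $\int_\Gamma|\nabla_\Gamma f|^2+|f|^2$, and the flux through $\Sigma_+$ by $\int_{\Sigma_+}|\nabla_{x,t}f|^2$. For $|f|^2$ we add $\pa_t(f^2)$. For slices with $s<\max\alpha$ the domain $\{t=s\}\cap Q_+$ is only part of $\Bbar$, but the same identity holds, and the $\Gamma$ flux there is still nonnegative and bounded. Gronwall in $s$ then yields the estimate at $s=T$.

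For \eqref{eq:fGammaest} we reverse the direction and use a weighted multiplier $2e^{-2\sigma(t-\alpha(x))}f_t$, or equivalently we estimate $F=e^{-\sigma(t-\alpha)}f$. We integrate over $Q_+$ and now isolate the $\Gamma$ flux on the left. To obtain $\sigma^2|f|^2$ on $\Gamma$ as well, we also integrate $\pa_t\big(e^{-2\sigma(t-\alpha)}\sigma^2 f^2\big)$, or more simply use $\T$-type transport along $\Gamma$. The derivative of the weight produces bulk terms of size $\sigma\int_{Q_+}(|\nabla f|^2+\sigma^2 |f|^2)$. The boundary terms appear on $H_+$ (with weight $e^{-2\sigma(T-\alpha)}\le 1$) and on $\Sigma_+$. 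The source term is $\int|\L f||f_t|$. The $H_+$ terms must be dropped or absorbed, so the weight is chosen so that they enter with a favorable sign or are bounded by bulk terms, using \eqref{eq:fHTest} or a cutoff in $t$ near $T$. Since the constants may not depend on $\sigma$, we track all $\sigma$ powers carefully.

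The main obstacle is the second estimate. The nonnegative $\Gamma$ flux controls only $|\nabla_\Gamma f|^2$ weighted by the metric, with the correct sign relative to the outward conormal of $Q_+$ (which points into $t<\alpha$). We must still obtain the $\sigma^2|f|^2$ term on $\Gamma$ and handle the top face $H_+$ without reintroducing $\Gamma$ terms. First we would try the direct approach: bound $\sigma^2\int_\Gamma|f|^2$ by writing $f(x,\alpha(x))=-\int_{\alpha}^{T}\pa_t f\,dt + f(x,T)$, which costs $\sigma^2$ times bulk and $H_+$ terms. If that is too lossy, we would instead apply the divergence theorem to $\sigma^2\pa_t(\chi(t) f^2)$ with a cutoff $\chi$ vanishing near $T$, which gives $\sigma^2\int_\Gamma |f|^2\cleq\sigma^2\int_{Q_+}|f||f_t|+\ldots$. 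That bound is in turn bounded by the allowed $\sigma\int(|\nabla f|^2+\sigma^2|f|^2)$ up to a factor we then recheck.
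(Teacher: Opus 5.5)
Your treatment of \eqref{eq:fHTest} matches the paper's. You integrate the identity $2(\L f+\sigma^2 f)f_t=\pa_t(a_{ij}\pa_i f\,\pa_j f+f_t^2+\sigma^2 f^2)-2\pa_j(a_{ij}\pa_i f\,f_t)+2(Pf)f_t$ with $\sigma=1$ over the slabs $Q_\tau=\{(x,t): x\in\Bbar,\ \alpha(x)\le t\le\tau\}$, then apply Gronwall in $\tau$.

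There is one slip in the flux computation. With conormal $(-\nabla\alpha,1)$ the cross term has a plus sign. Writing the $\Gamma$ integral as an integral over $x\in\Bbar$ and using $a_{ij}\pa_i\alpha\,\pa_j\alpha=1$, the flux density is exactly $a_{ij}(\pa_i f+\pa_i\alpha\,f_t)(\pa_j f+\pa_j\alpha\,f_t)+\sigma^2 f^2=\|\nabla_g(f(x,\alpha(x)))\|^2+\sigma^2f^2$. Your $\pa_i f-\pa_i\alpha\,f_t$ corresponds to $\pa_i-\pa_i\alpha\,\pa_t$, which is not tangent to $\Gamma$. Since the correct expression is an identity, it also gives the lower bound by $|\nabla_\Gamma f|^2$ that \eqref{eq:fGammaest} needs.

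The gap is in \eqref{eq:fGammaest}, at the step you leave open: eliminating the top face $H_+$.

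\emph{The exponential weight does not remove it.} For any positive weight multiplying the $f_t$-energy identity, the $H_+$ energy lands on the side that bounds the $\Gamma$ flux from above, so no such weight gives it a favourable sign. The weight $e^{-2\sigma(t-\alpha)}$ only damps it by $e^{-2\sigma(T-\alpha)}$, while the right side of \eqref{eq:fGammaest} has no $H_+$ term at all.

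\emph{Invoking \eqref{eq:fHTest} does not remove it either.} That estimate does not bound the $H_+$ energy by bulk terms. Its right side contains $\int_\Gamma(|\nabla_\Gamma f|^2+|f|^2)$, which the damping would let you absorb for large $\sigma$. But it also contains $\int_{Q_+}|\L f|^2$, which is not controlled by the right side of \eqref{eq:fGammaest}. So that route gives at best a weaker variant valid for large $\sigma$.

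\emph{What works is a weight that vanishes on $H_+$.} This is the cutoff you mention but then apply only to the $\sigma^2|f|^2$ term. The paper uses the unweighted identity on $Q_\tau$ for every $\tau\in[T_0,T]$; for such $\tau$ the bottom face is all of $\Gamma$, because $\alpha<T_0$ on $\Bbar$. It then averages over $\tau$, which is the same as multiplying by $\chi(t)=\min\{1,(T-t)/(T-T_0)\}$. The $H_\tau$ energy becomes $(T-T_0)^{-1}\int_{Q_+}(|\nabla_{x,t}f|^2+\sigma^2 f^2)$, which is dominated by $\sigma\int_{Q_+}(|\nabla_{x,t}f|^2+\sigma^2f^2)$ once $\sigma\ge1$. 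This is where $T>T_0$ is used.

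Because $\sigma^2 f$ is already in the multiplier, the $\sigma^2|f|^2$ term on $\Gamma$ comes out of the same identity. The bulk error is $2\sigma^2|f|\,|f_t|\le\sigma(|f_t|^2+\sigma^2|f|^2)$. So the separate step you identify as the main obstacle is unnecessary. Your first idea for it, integrating $f_t$ from $\alpha$ to $T$, would cost $\sigma^2\int_{Q_+}|f_t|^2$, which is not admissible.
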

\vspc
\noindent
We postpone the proof of Lemma \ref{lemma:energy} to the end of this section.
%%%%%%%%%%%%

%%%%%%%%%
\begin{proof}[Proof of Proposition \ref{prop:aux}]
~

Since $\vphi$ is strongly pseudo-convex w.r.t $\L$, from \cite[Theorem A.7]{rs20a}, there is a $\sigma_0>1$ such that for
all $\sigma \geq \sigma_0$ and all $w\in C^2(Q)$, we have
\begin{align}
\sigma \|w\|^2_{1,Q,\sigma} \cleq \|\L w\|^2_{0,Q,\sigma} + \sigma ( \|w\|^2_{1, \Sigma, \sigma} 
+ \|\pa_\nu w\|^2_{0,\Sigma, \sigma})
+ \sigma ( \|w\|^2_{1,H_{\pm}, \sigma} + \|w_t\|^2_{0, H_{\pm}, \sigma}),
\label{eq:carleman}
\end{align}
where the constant is independent of $w$ and $\sigma$. 

\noindent
\underline{An upper bound on $\|w\|_{1,\sigma,\Gamma}$}

We intend using \eqref{eq:fGammaest} from Lemma \ref{lemma:energy} with $f = e^{\sigma \vphi} w$ on $Q_+$. We take
$\sigma \geq \sigma_0$.
A simple calculation shows that
\begin{align*}
|\L f| & \cleq e^{\sigma \vphi} |\L w| + e^{\sigma \vphi} ( \sigma^2 |w| + \sigma |\nabla_{x,t}w| ),
\end{align*}
with the constant independent of $w,\sigma$. 
Since $|f_t| \cleq e^{\sigma \vphi} (|w_t| + \sigma |w|)$ and $\sigma \geq \sigma_0>0$, we have
\begin{align*}
|\L f | \, |f_t| & \cleq e^{2 \sigma \vphi} \, |\L w| (|w_t| + \sigma |w|) + \sigma e^{2 \sigma \vphi} ( |\nabla_{x,t} w| + \sigma |w|)
(|w_t| + \sigma |w|)
\\
& \cleq e^{2 \sigma \phi} |\L w|^2 + \sigma e^{2 \sigma \vphi} ( |\nabla_{x,t} w|^2 + \sigma^2 |w|^2),
\end{align*}
giving us
\begin{align}
\int_{Q_+} |\L f | \, |f_t| \cleq \| \L  w\|_{0,Q_+,\sigma}^2 + \sigma \| w \|_{1,Q_+, \sigma}^2.
\label{eq:Lfest}
\end{align}
Now $w = e^{-\sigma \vphi} f$, so $|w| = e^{-\sigma \vphi} |f|$ and
$|\nabla_\Gamma w| \cleq e^{- \sigma \vphi} (|\nabla_\Gamma f| + \sigma |f|)$
so
\[
e^{2 \sigma \vphi} ( |\nabla_\Gamma w|^2 + \sigma^2 |w|^2) \cleq |\nabla_\Gamma f|^2 + \sigma^2 |f|^2;
\]
further $ | \nabla_{x,t} f|^2 + \sigma^2 |f|^2 \cleq e^{2 \sigma \phi} ( |\nabla_{x,t} w|^2 + \sigma^2 |w|^2)$.
Using these and \eqref{eq:Lfest} in \eqref{eq:fGammaest}, we obtain
\begin{align*}
\|w\|_{1,\Gamma,\sigma}^2 \cleq \|\L w\|_{0,Q_+,\sigma}^2 + \sigma \|w\|_{Q_+,1,\sigma}^2
+ \|w\|_{1,\Sigma_+,\sigma}^2 + \|\pa_\nu w\|_{0,\Sigma_+,\sigma}^2.
\end{align*}
Combining this with \eqref{eq:carleman}, for $\sigma \geq \sigma_0$, we obtain
\begin{align}
\|w\|_{1,\Gamma,\sigma}^2 
& \cleq  \|\L w\|_{0,Q,\sigma}^2 + \sigma ( \|w\|^2_{1,H_{\pm}, \sigma} + \|w_t\|^2_{0, H_{\pm}, \sigma})
 + \sigma(  \|w\|_{1,\Sigma,\sigma}^2 + \|\pa_\nu w\|_{0,\Sigma,\sigma}^2).
\label{eq:Gcarleman}
\end{align}

\noindent
\underline{Removing the $H_{\pm}$ integrals from \eqref{eq:Gcarleman}}

From \eqref{eq:fHTest} in Lemma \ref{lemma:energy} applied to $w$ (instead of $f$) and observing that
a similar estimate holds for $Q_-$, we have
\begin{align*}
\|w\|_{1, H_{\pm}}^2 + \|w_t\|_{0,H_{\pm}}^2 \cleq \|\L w\|^2_{0,Q}  + \|w\|_{1,\Gamma}^2
+ ( \|w\|_{1,\Sigma}^2 + \|\pa_\nu w\|_{0,\Sigma}^2).
\end{align*}
Hence 
\begin{align}
\|w\|_{1, H_{\pm}, \sigma}^2 + \|w_t\|_{0,H_{\pm}, \sigma}^2 
\cleq \sigma^2 e^{2 \sigma \vphi_H} \left ( \|\L w\|^2_{0,Q}  
+   \|w\|_{1,\Gamma}^2
+ \|w\|_{1,\Sigma}^2 + \|\pa_\nu w\|_{0,\Sigma}^2 \right ).
\label{eq:wTtemp}
\end{align}
Since
\[
2 \delta :=  \min_\Gamma \vphi - \max_{H_{\pm }} \vphi
\]
is positive, we have $e^{2 \sigma \vphi_H} \leq e^{-2 \sigma \delta} \min_\Gamma e^{2 \sigma \vphi}$
and $\sigma^2 \leq e^{\delta \sigma}$ for $\sigma$ large enough, hence
\[
\sigma^2 e^{2 \sigma \vphi_H}  \|w\|_{1,\Gamma}^2 \cleq  e^{-\sigma \delta}
\|w\|^2_{1,\sigma, \Gamma}.
\]
Using this in \eqref{eq:wTtemp}, we see that for $\sigma$ large enough, we have
\[
\|w\|_{1, H_{\pm }, \sigma}^2 + \|w_t\|_{0,H_{\pm }, \sigma}^2 
\cleq  e^{-\sigma \delta} \|w\|_{1,\Gamma, \sigma} +
\sigma^2 e^{2 \sigma \vphi_H} \left ( \|\L w\|^2_{0,Q}  
+ \|w\|_{1,\Sigma}^2 + \|\pa_\nu w\|_{0,\Sigma}^2 \right ).
\]
Using this in \eqref{eq:Gcarleman}, for $\sigma$ large enough, we have
\begin{align*}
\|w\|_{1,\Gamma,\sigma}^2 
& \cleq  \|\L w\|_{0,Q,\sigma}^2 + \sigma^2 e^{2 \sigma \vphi_H} \|\L w\|_{0,Q}
+ e^{ k \sigma } (  \|w\|_{1,\Sigma}^2 + \|\pa_\nu w\|_{0,\Sigma}^2),
\end{align*}
for some $k>0$ independent of $w$ and $\sigma$, with the inequality constant also independent of $w$ and $\sigma$.
\end{proof}

%%%%%%%%%%%%%%%%%
%%%%%%%%%

\begin{proof}[Proof of Lemma \ref{lemma:energy}] ~

For convenience, we write $\alpha_{g,\omega}(x)$ as $\alpha(x)$. We take $\nabla_\Gamma$, the vector fields 
spanning the tangent bundle of $\Gamma$,  to be
\[
(\nabla_\Gamma f)(x,\alpha(x)) := 
\nabla_g ( f(x, \alpha(x) ) ) = g^{-1} \nabla (f (x,\alpha(x)) ) = g^{-1} ( \nabla f + \nabla \alpha \, \pa_t f )
= \nabla_g f + \nabla_g \alpha \, \pa_t f,
\]
and note that
\begin{align}
\| \nabla_g ( f (x, \alpha(x) ) ) \|^2 &= \| \nabla_g f \|^2 + (\pa_t f)^2 + 2 (\pa_t f) \, \la \nabla_g \alpha , \nabla_g f \ra
\nn
\\
& = (\nabla f)^T g^{-1} (\nabla f) + (\pa_t f )^2 + 2 (\pa_t f) (\nabla \alpha)^T g^{-1} \nabla f
\nn
\\
& = a_{ij} \, \pa_i f \, \pa_j f + 2 (\pa_t f) \, a_{ij} \pa_i \alpha \, \pa_j f + (\pa_t f)^2.
\label{eq:fgnorm}
\end{align}

From Proposition \ref{prop:exinj}, 
we know that the part of the surface $t=\alpha(x)$ in $\Bbar \times \R$, is traced out by the 
family of curves $t \to (\gamma_(t), t)$, the level surfaces of $\alpha$ split the region $\Bbar$ into at most 
two connected components, and  $\min_{x \in \Bbar} \alpha(x)=-1$. For each $\tau \in [-1, T]$, define (see Figure \ref{fig:energy1})
\begin{figure}[h]
\begin{center}
\epsfig{file=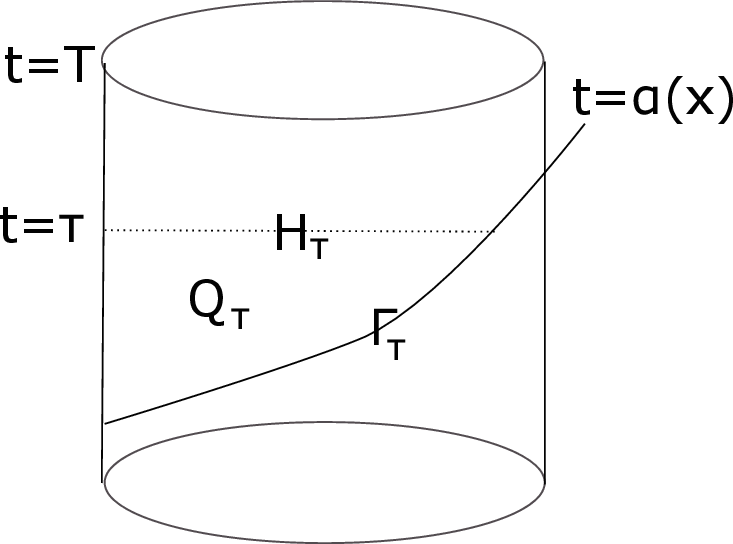, height=1.5in}
\end{center}
\caption{The $\tau$ dependent geometrical objects.}
\label{fig:energy1}
\end{figure}
\begin{gather*}
D_{\tau} := \{ x \in \Bbar : \alpha(x) \leq \tau \}, \qquad Q_\tau := \{ (x,t) : x \in \Bbar , ~ \alpha(x) \leq t \leq \tau  \}, 
\\
H_{\tau} := \{ (x, \tau) : x \in D_{\tau} \},
\qquad
\Gamma_\tau := \{ (x, \alpha(x)) : x \in D_{\tau} \}.
\end{gather*}
Note that if $\tau \geq \max_{x \in \Bbar} \alpha(x)$ then
$H_\tau = \Bbar \times \{\tau\}$ and $\Gamma_\tau = \Gamma$.
Define the `energy' at level $\tau$ as
\begin{align*}
E(\tau, \sigma) & := \int_{H_\tau}   |\pa_t f|^2 + a_{ij} \pa_i f \, \pa_j f  + \sigma^2 |f|^2.
\end{align*}
In our calculations below, there will be integrals over $\Sigma_+$ of expressions which are bilinear in
$\nabla_{x,t} f, f$ and there will be no $\sigma$ in these expressions. All such terms will be denoted by
$I_{\Sigma_+}(f)$.

One may verify the identity
\begin{align}
2 (\L f + \sigma^2 f) \, \pa_t f & = \pa_t \left ( a_{ij} \, \pa_i f \, \pa_j f + (\pa_t f)^2 + \sigma^2 f^2 \right )
- 2 \pa_j \left (a_{ij} \, \pa_i f \, \pa_t f \right )
+ 2(P f) \, f_t,
\label{eq:Afiden}
\end{align}
where $P f =  \pa_j (a_{ij}) \pa_i f -  b_i \pa_i f - cf$. 
Integrating \eqref{eq:Afiden} over $Q_\tau$, applying the divergence theorem and using \eqref{eq:fgnorm}, we have
\begin{align}
2\int_{Q_\tau} &  (\L f + \sigma^2 f - Pf ) \, f_t    +  I_{\Sigma_+}(f)
= \int_{H_\tau} (\pa_t f)^2 + a_{ij} \pa_i f \pa_j f + \sigma^2 |f|^2 
\nn
\\
& \qquad  -
\int_{D_{\tau}} \left ( a_{ij}  \, \pa_i f \, \pa_j f + 2 \pa_t f \, a_{ij} \; \pa_i \alpha \; \pa_j f + (\pa_t f)^2
+ \sigma^2 f^2\right) (x, \alpha(x)) 
\nn
\\
& = E(\tau, \sigma) - \int_{D_\tau} ( |\nabla_\Gamma f|^2 + \sigma^2 f^2)(x,\alpha(x)) 
\label{eq:divthm}
\end{align}
where
\begin{equation}
I_{\Sigma_+} (f) = 2 \int_{\Sigma_+} a_{ij} \nu_j \pa_i f \, \pa_t f .
\label{eq:Isigma}
\end{equation}

We first prove \eqref{eq:fHTest}. From \eqref{eq:divthm}. Taking $\sigma=1$, we have
\begin{align*}
E(\tau, \sigma=1) + I_{\Sigma_+}(f) & \cleq \int_\Gamma |\nabla_\Gamma f|^2 +  |f|^2 
+ \int_{Q_\tau} (|\L f| +  |f| + |\nabla_{x,t} f| )\, |f_t|
\\
& \cleq \int_\Gamma |\nabla_\Gamma f|^2 +  |f|^2  + \int_{Q_\tau} |\L f|^2 + |\nabla_{x,t}f|^2 +  |f|^2
\\
& \cleq \int_{-1}^\tau E(s, \sigma=1) \, ds + \int_\Gamma |\nabla_\Gamma f|^2 +  |f|^2  + \int_{Q_\tau} |\L f|^2.
\end{align*}
Hence, by Gronwall's inequality, 
\[
E(T,\sigma=1) \cleq  \int_\Gamma |\nabla_\Gamma f|^2 +  |f|^2  
+ \int_{Q_+} |\L f|^2 \, dV + \int_{\Sigma_+} |\nabla_{x,t}f|^2 + f^2 ,
\]
with the constant independent of $f$,
proving \eqref{eq:fHTest}.

Next we establish \eqref{eq:fGammaest}. We take $\tau \in [T_0, T]$ and note that $H_\tau = B \times \{\tau\}$,
$\Gamma_\tau = \Gamma$. From \eqref{eq:divthm}, for $\sigma \geq 1$,
we have
\begin{align*}
\int_\Gamma | \nabla_\Gamma f|^2 & + \sigma^2 |f|^2  + I_{\Sigma_+}(f) 
 \cleq \int_{H_\tau} |\nabla_{x,t}f|^2 + \sigma^2 f^2 
+  \int_{Q_+} (|\L f| + |\nabla_{x,t}f| + \sigma^2 |f|) \, |f_t| 
\\
& \cleq \int_{H_\tau} (|\nabla_{x,t}f|^2 + \sigma^2 f^2)  +  \int_{Q_+} |\L f|  \, |f_t|  
+ \sigma \int_{Q_+}  |\nabla_{x,t}f|^2 + \sigma^2 |f|^2 .
\end{align*}
Integrating this in $\tau$ over $[T_0,T]$, for $\sigma \geq 1$, we obtain
\begin{align*}
\int_\Gamma | \nabla_\Gamma f|^2 + \sigma^2 |f|^2  
\cleq  \int_{Q_+} |\L f|  \, |f_t| + \sigma \int_{Q_+}  |\nabla_{x,t}f|^2 + \sigma^2 |f|^2 
+ \int_{\Sigma_+} |\nabla_{x,t} f|^2 + |f|^2,
\end{align*}
thus proving \eqref{eq:fGammaest}.
\end{proof}

%%%%%%%%%%%%%%
%%%%%

\subsection{Proof of Proposition \ref{prop:phiexistence}}\label{subsec:weight}

Since $\kappa$ is strictly convex in $\Bbar$, there is a positive constant  $\kappa_{con}$ such that
\begin{align}
\nabla^2 \kappa(x) \geq \kappa_{con} \, g(x), \qquad x \in \Bbar,
\label{eq:kappamin}
\end{align}
as quadratic forms on $T_x(\R^n)$.
Define
\[
T_2 := 2 \sqrt{g_{max}} -1,
\]
and note that
\[
T_2 \geq \max_{x \in \Bbar} \alpha(x), \qquad T_0 \geq 1.
\]
We will seek a $T_*$ which is larger than $2 T_2$, so consider any $T$ with 
\[
T \geq \max\{ T_0, 2T_2\};
\] 
hence $T \geq 2$.
For any small $\ep>0$, define
\[
\psi(x,t) := \|\kappa\|_\infty + \kappa(x) - \ep \, (t - \alpha(x))^2, \qquad (x,t) \in \Bbar \times \R.
\]
We added the $\|\kappa\|_\infty$ term in the definition of $\psi$ to help with (c); this has no effect on the proof of
(a) and (b). One could also have replaced the convex $\kappa$ by the convex $\kappa + \|\kappa\|_\infty$.

Now
\[
\min_{x \in \Bbar} \psi(x, \alpha(x))  - \|\kappa\|_\infty =  \min_{x \in \Bbar}  \kappa(x).
\]
Further, since $\max_{x \in \Bbar} \alpha(x) \leq T_2$ and $ \min_{x \in \Bbar} \alpha(x) = -1$, we have
\begin{align*}
\max_{ x \in \Bbar} |\pm T-\alpha(x)| &\leq \max(T+1, T_2 + T) = T + T_2 \leq \frac{3T}{2},
\\
\min_{x \in \Bbar} |\pm T - \alpha(x)| & \geq \min ( T-1,  T -T_2 ) = T-T_2 \geq \frac{T}{2},
\end{align*}
hence
\begin{align*}
\max_{x \in \Bbar} \psi(x, \pm T) - \| \kappa \|_\infty
& \leq \max_{x \in \Bbar} \kappa(x) - \ep \min_{x \in \Bbar} |\pm T - \alpha(x)|^2
\leq \max_{x \in \Bbar} \kappa(x) - \ep \frac{T^2}{4}.
\end{align*}
Therefore
\begin{align*}
2 \delta := \min_{x \in \Bbar} \psi(x,\alpha(x)) - \max_{x \in \Bbar} \psi(x,\pm T) 
& \geq \ep \frac{T^2}{4} - ( \max_{x \in \Bbar} \kappa(x) - \min_{x \in \Bbar} \kappa(x)),
\end{align*}
with $\delta$ positive if
\begin{equation}
\ep T^2   >  4 \left ( \max_{x \in \Bbar} \kappa(x) - \min_{x \in \Bbar} \kappa(x) \right ).
\label{eq:condTk1}
\end{equation}

Next, if $ t \to (x(t),t)$ is a time parametrized ray for $\Box_g$ then 
then $t \to x(t)$ is a unit speed geodesic for $(R^n,g)$. For $(x(t),t) \in \Bbar \times [-T,T]$, 
using the definition of the covariant derivative and that $t \to x(t)$ is a geodesic, we have 
\begin{align*}
\frac{d^2}{dt^2} [ \psi(x(t),t ) ]
& = \frac{d^2}{dt^2}[ \kappa( x(t) ) ] - 2 \ep \left ( 1 - \frac{d}{dt} [ \alpha(x(t) ) ]\right )^2
+ 2 \ep (t - \alpha(x(t))) \frac{d^2}{dt^2} [ \alpha(x(t) ) ]
\\
& = (\nabla^2\kappa) ( \xdot, \xdot) - 2 \ep( 1 - \la \nabla_g \alpha, \xdot \ra )^2 + 2 \ep (t-\alpha) (\nabla^2 \alpha)(\xdot, \xdot).
\end{align*}
Now 
\[
| \la \nabla_g \alpha, \xdot \ra | \leq \|\nabla_g \alpha \|^2 \, \|\xdot\|^2 = 1,
\]
and
\[
\max_{x \in \Bbar, ~|t| \leq T} |t-\alpha(x)| \leq \max_{x \in \Bbar} T + \max_{x \in \Bbar} |\alpha(x)|
\leq T + T_2 \leq \frac{3T}{2}.
\]
Further, there is a constant $c_\alpha$, dependent on $\|\alpha\|_{C^2}$ and $g$ such that
\[
\nabla^2 \alpha \leq c_\alpha g \qquad \text{on } \Bbar.
\]
Hence, for $(x(t),t) \in \Bbar \times [-T,T]$, we have
\begin{align*}
\frac{d^2}{dt^2} [ \psi(x(t),t ) ] & \geq (\nabla^2 \kappa - 3 T \ep \, \nabla^2 \alpha)(\xdot, \xdot) - 8 \ep
\geq (\kappa_{con} - 3 T \ep c_\alpha) g(\xdot, \xdot) - 8 \ep
\\
& = \kappa_{con} - \ep (8 +3 T \,c_\alpha) \geq \kappa_{con} - \ep T (4 + 3 c_\alpha),
\end{align*}
since we chose $T  \geq 2$.
So level surfaces of $\psi$ are pseudoconvex w.r.t $\Box_g$ on $\Bbar \times [-T,T]$, if 
\begin{equation}
 \ep T (4+3 \,c_\alpha)  <  \kappa_{con}.
\label{eq:pseudocond}
\end{equation}

Now \eqref{eq:condTk1} and \eqref{eq:pseudocond} hold iff
\begin{equation}
\frac{4}{T^2} ( \max_{x \in \Bbar} \kappa(x) - \min_{x \in \Bbar} \kappa(x))
< \ep < \frac{ \kappa_{con}}{(4+ 3 c_\alpha) T}.
\label{eq:epdef}
\end{equation}
So there is an $\ep>0$ so that
\eqref{eq:condTk1} and \eqref{eq:pseudocond} hold if $T \geq 2 T_2$ and
\[
\frac{4}{T} ( \max_{x \in \Bbar} \kappa(x) - \min_{x \in \Bbar} \kappa(x)) <  \frac{ \kappa_{con}}{(4+ 3 c_\alpha) }.
\]
So it is enough choose $T \geq T_*$ where
\[
T_* := 2T_2 +  \frac{4(4 + 3 c_\alpha)}{\kappa_{con}} \, 
( \max_{x \in \Bbar} \kappa(x) - \min_{x \in \Bbar} \kappa(x)),
\]
and then choose an $\ep_T$ satisfying \eqref{eq:epdef}.

If the level surfaces of $\psi$ are pseudoconvex with respect to $\Box_g$ on the region $\Bbar \times [-T,T]$ then
\cite[Propositions A.3, A.4, A.5]{rs20a} guarantees there is a $\lambda_*>0$ so that if $\lambda \geq \lambda_*$
then
\[
\vphi(x,t) = e^{\lambda \psi(x,t)} = e^{\lambda \left ( \|\kappa\|_\infty + \kappa(x) - \ep (t - \alpha(x))^2 \right )}
\]
is strongly pseudoconvex with respect to $\Box_g$ on the region $\Bbar \times [-T,T]$. Note that $\lambda_*$ depends
on $T, \ep, \|\kappa\|_{C^2(\Bbar)}, \|\alpha\|_{C^2(\Bbar)}$, $\kappa_{con}$. Further
(b) holds for $\vphi$ because it holds for $\psi$.

We now verify that $\vphi$ satisfies (c). We use a modification of the proof of \cite[Lemma 3.2]{rs20b} 
- we demand more of our $\eta(\sigma)$ than was needed in \cite{rs20b}.
We first observe that
\[
\psi(x,\alpha(x)) \geq \| \kappa \|_\infty + \min_{x \in \Bbar} \kappa(x) \geq 0, \qquad \forall x \in \Bbar.
\]
Next, for any $x \in \Bbar$, $t \in \R$, $\lambda \geq \lambda_*$, 
\begin{align*}
 \vphi(x,\alpha(x)) - \vphi(x,t)& =  e^{\lambda \psi(x,\alpha(x))} - e^{\lambda \psi(x,t)} 
= 
e^{\lambda \psi(x,\alpha(x))} \left ( 1 - e^{ \lambda ( \psi(x,t) - \psi(x,\alpha(x)) )} \right )
\\
& = e^{\lambda \psi(x,\alpha(x))}  \left ( 1 - e^{ -\ep \lambda ( t - \alpha(x) )^2} \right )
\geq 1 - e^{ -\ep \lambda ( t - \alpha(x) )^2},
\end{align*}
because $\psi(x, \alpha(x)) \geq 0$ and $\lambda \geq 0$.
Now, for $s \geq 0$,
\[
1 - e^{-s} \geq \frac{1}{3} \min(1,s),
\]
hence, for $(x,t) \in \Bbar \times [-T,T]$,
\[
3 (\vphi(x,t) - \vphi(x,\alpha(x)) \leq - \min \left (1, \, \ep \lambda (t - \alpha(x))^2 \right ),
\]
therefore, for each $x \in \Bbar$, we have
\begin{align*}
\int_{-T}^T e^{ 2 \sigma (\vphi(x,t) - \vphi(x,\alpha(x))} \, dt
& \leq 
\int_{-T}^T e^{-(2\sigma /3)\, \min( 1, \, \ep \lambda(t - \alpha(x))^2)} \, dt
= 
\int_{-T - \alpha(x)}^{T- \alpha(x)} e^{- (2 \sigma /3) \, \min( 1, \, \ep \lambda t^2)} \, dt
\\
& \leq \int_{-T - C_1}^{T+C_2} e^{- (2\sigma /3) \, \min( 1, \, \ep \lambda t^2)} \, dt,
\end{align*}
for some $C_1, C_2$ independent of  $\sigma$ and $x \in \Bbar$. Hence
\[
\eta(\sigma) := \sup_{x \in \Bbar} \int_{-T}^T e^{ 2 \sigma (\vphi(x,t) - \vphi(x,\alpha(x))} \, dt
\leq 
\int_{-T - C_1}^{T+C_2} e^{- (2 \sigma/3) \, \min( 1, \, \ep \lambda t^2)} \, dt.
\]
Noting that $\ep>0, \lambda>0$, by the Dominated Convergence Theorem,
\[
\lim_{\sigma \to \infty} \sigma \eta(\sigma) =0.
\]
%

%%%%%%%%%%%%%
%%%%%%%%%%%%%%%

\subsection{Consequence of a Carleman estimate}\label{subsec:conseq}

The endings of the proofs of Theorems \ref{thm:cunique}, \ref{thm:Aunique} reduce
to an application of the following proposition. Below
\[
\|[u,v,w]\|_{0,\sigma,Q}^2 := \|u\|_{0,\sigma,Q}^2 + \|v\|_{0,\sigma,Q}^2 + \|w\|_{0,\sigma,Q}^2.
\]

\begin{prop}\label{prop:conseq}
Suppose $T>0$, $E$ a second order elliptic operator on $\Bbar$, $\alpha(x)$ a smooth function on $\Bbar$,
and $\vphi(x,t)$ a smooth function on $Q$, such that the following conditions hold:
\vspc
\begin{enumerate}[(a)]
\item $\mu(x) := \vphi(x,\alpha(x))$ is strongly pseudoconvex w.r.t $E$ on $\Bbar$;
\item $ \vphi(x, \alpha(x)) > \vphi_H := \sup_{H_\pm} \vphi$ for all $x \in \Bbar$;
\item $\lim_{\sigma \to \infty} \sigma \eta(\sigma)=0$ where
\[
\eta(\sigma):= \sup_{x \in \Bbar} \int_{-T}^T e^{2\sigma( \vphi(x,t) - \vphi(x,\alpha(x)))} \, dt.
\]
\end{enumerate}
\vspc
If $f \in C_c^\infty(B)$ and $F(\sigma)$ is a non-negative function on $[0,\infty)$ such that
\begin{equation}
\|Ef\|_{0,\sigma,\Gamma}^2 - C \|f\|_{1,\sigma,\Gamma}^2 
\cleq  \| [\pa^2 f, \pa f, f] \|_{0,\sigma,Q}^2 +  e^{2 \sigma \vphi_H} \|[\pa^2 f, \pa f, f ]\|_{0,Q}^2 + F(\sigma),
\label{eq:lemmaEF}
\end{equation}
for some $C>0$ and all large enough $\sigma$, with $C$ and the inequality
constant independent of $\sigma$, then
\[
 \sigma^{-1} \|\pa^2 f \|_{0,\sigma,\Gamma} 
 + \sigma \| f \|_{1,\sigma,\Gamma}^2 \cleq F(\sigma),
 \]
 for large enough $\sigma$, with the inequality constant independent of $\sigma$.
\end{prop}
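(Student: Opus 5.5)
The plan is to combine an elliptic Carleman estimate for $E$ on $\Bbar$ with weight $\mu$ (available by (a)) with the decay properties (b), (c) of $\vphi$ in $t$. Here $f=f(x)$ depends only on $x$, and the norms on $\Gamma$ are identified with weighted norms on $\Bbar$ with weight $e^{2\sigma\mu}$ (the surface measure and tangential derivatives on $\Gamma$ are comparable to those on $\Bbar$, since $\Gamma$ is the graph of the smooth $\alpha$). Since $\mu$ is strongly pseudoconvex w.r.t. $E$ and $f\in C_c^\infty(B)$, so no boundary terms appear, the standard elliptic Carleman estimate gives, for $\sigma$ large,
\[
\sigma^{-1}\|\pa^2 f\|^2_{0,\sigma,\Gamma} + \sigma\|f\|^2_{1,\sigma,\Gamma} \cleq \|Ef\|^2_{0,\sigma,\Gamma}.
\]
The $\sigma^{-1}\|\pa^2 f\|^2$ term is obtained either from the Carleman estimate with second derivatives or by elliptic regularity applied to $e^{\sigma\mu}f$. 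I will take this as the starting inequality, interpreting the conclusion's first term as squared.

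Next I control the right side of \eqref{eq:lemmaEF}. Because $f$ is $t$-independent, Fubini gives, for $h\in\{\pa^2 f,\pa f,f\}$,
\[
\|h\|^2_{0,\sigma,Q} = \int_B e^{2\sigma\mu(x)}|h(x)|^2\int_{-T}^T e^{2\sigma(\vphi(x,t)-\mu(x))}\,dt\,dx \le \eta(\sigma)\,\|h\|^2_{0,\sigma,\Gamma}.
\]
For the second term, set $2\delta:=\min_{\Bbar}\mu-\vphi_H>0$, which is positive by (b) and compactness. Then $e^{2\sigma\vphi_H}\|h\|^2_{0,Q}\cleq e^{-4\sigma\delta}\|h\|^2_{0,\sigma,\Gamma}$. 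Hence the right side of \eqref{eq:lemmaEF} is bounded by
\[
(\eta(\sigma)+e^{-4\sigma\delta})\bigl(\|\pa^2 f\|^2_{0,\sigma,\Gamma}+\|f\|^2_{1,\sigma,\Gamma}\bigr)+F(\sigma),
\]
using $\|\pa f\|^2+\|f\|^2\le \|f\|^2_{1,\sigma,\Gamma}$ for $\sigma\ge1$.

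Finally I insert this bound into the Carleman estimate via \eqref{eq:lemmaEF}:
\[
\sigma^{-1}\|\pa^2 f\|^2_{0,\sigma,\Gamma}+\sigma\|f\|^2_{1,\sigma,\Gamma} \cleq C\|f\|^2_{1,\sigma,\Gamma} + (\eta(\sigma)+e^{-4\sigma\delta})\bigl(\|\pa^2 f\|^2_{0,\sigma,\Gamma}+\|f\|^2_{1,\sigma,\Gamma}\bigr)+F(\sigma).
\]
The term $C\|f\|^2_{1,\sigma,\Gamma}$ is absorbed by $\sigma\|f\|^2_{1,\sigma,\Gamma}$ for $\sigma$ large.

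The main obstacle is the $\|\pa^2 f\|^2$ term. Its coefficient $\eta(\sigma)+e^{-4\sigma\delta}$ must be small compared with $\sigma^{-1}$, and this is exactly what hypothesis (c), $\sigma\eta(\sigma)\to0$, provides. It is the reason the stronger decay rate of $\eta$ was established in Proposition \ref{prop:phiexistence}. With that, all terms on the right except $F(\sigma)$ are absorbed for large $\sigma$, which gives the claim.
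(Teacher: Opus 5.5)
Your proposal is correct and follows the paper's proof essentially step for step: the elliptic Carleman estimate for $E$ with weight $\mu$, the Fubini bound $\|h\|^2_{0,\sigma,Q}\le\eta(\sigma)\|h\|^2_{0,\sigma,\Gamma}$, the bound on the $e^{2\sigma\vphi_H}$ term via the gap $\delta$ from (b), and absorption using $\sigma\eta(\sigma)\to 0$. Your explicit absorption of the $C\|f\|^2_{1,\sigma,\Gamma}$ term and your reading of the first term of the conclusion as squared agree with the paper's argument and are slightly more careful than its write-up.
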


\begin{proof}~

Define $\delta$ by
\[
2 \delta := \inf_{x \in \Bbar} \vphi(x,\alpha(x)) - \vphi_H;
\]
then $\delta>0$ by hypothesis.

Since $\mu$ is strongly pseudoconvex w.r.t $E$ on $\Bbar$, and $f$ and its derivatives are zero on $\pa B$,
from \cite[Theorem 8.3.1]{hormander1976}, we have
\[
\sigma^{-1} \int_B e^{2 \sigma \mu} |\pa^2 f|^2
+ \sigma \int_B e^{2 \sigma \mu} (|\pa f|^2 + \sigma^2 |f|^2)
\cleq 
\int_B e^{2 \sigma \mu} |E f|^2,
\]
for large enough $\sigma$, with the constant independent of $\sigma$.
On the other hand, for any smooth function $h(x)$ on $\Bbar$, we have
\begin{align*}
\|h\|_{0,\sigma,Q}^2 = \int_Q e^{2 \sigma \vphi} |h|^2
= \int_B e^{2 \sigma \vphi(x,\alpha(x))} |h(x)|^2 \int_{-T}^T e^{2 \sigma (\vphi(x,t) - \vphi(x,\alpha(x)))} \, dt \, dx
\leq \eta(\sigma) \|h\|_{0,\sigma,\Gamma}^2.
\end{align*}
and
\begin{align*}
e^{2 \sigma \vphi_H} \|h\|_{0,Q}^2 
= e^{2 \sigma \vphi_H} \int_Q |h|^2
\leq  e^{-2 \sigma \delta} \int_B e^{2 \sigma \vphi(x,\alpha(x))} |h(x)|^2
= e^{-2 \sigma \delta} \|h\|_{0,\sigma,\Gamma}^2.
\end{align*}
Using these in \eqref{eq:lemmaEF}, we obtain
\[
\sigma^{-1} \|\pa^2 f\|_{0,\sigma,\Gamma}^2
+ \sigma \|f\|_{1,\sigma, \Gamma}^2
\cleq  ( \eta(\sigma) + e^{-2 \sigma \delta} )  \|[\pa^2 f, \pa f, f]\|_{0,\sigma,\Gamma}^2 + F(\sigma),
\]
for large enough $\sigma$. Now $\lim_{\sigma \to \infty} \sigma \eta(\sigma) =0$ and
$\lim_{\sigma \to \infty} \sigma e^{-2 \sigma \delta} =0$. Hence, for large enough $\sigma$, we have
\[
\sigma^{-1} \|\pa^2 f\|_{0,\sigma,\Gamma}^2
+ \sigma \|f\|_{1,\sigma, \Gamma}^2
\cleq  F(\sigma).
\]
\end{proof}

%%%%%%%%%%%%%
%%%%%%%%%%%%%

\subsection{Some useful calculations}\label{subsec:standard}

\noindent
\underline{Derivation of \eqref{eq:Lwpta}}

Suppose  $ \L := \pa_t^2 - a_{ij} \pa_i \pa_j - b_i \pa_i - c$, $\L'$ is an arbitrary operator,
$f'(x,t)$ is a smooth function on $\Bbar \times \R$, and define
\[
\fhat'(x,t) := f'(x, t - \alphabar(x)).
\]
Then
\begin{align*}
\pa_t^2 \fhat'(x,t) & = (\pa_t^2 f')(x, t - \alphabar)
\\
\pa_j \fhat'(x,t) & =(\pa_j f')(x, t - \alphabar(x)) - (\pa_j \alphabar) (\pa_t f')(x, t - \alphabar(x))
\\
\pa_i \pa_j \fhat'(x,t) & = (\pa_i \pa_j f')(x, t - \alphabar(x)) - (\pa_i \alphabar) (\pa_t \pa_j f')(x, t - \alphabar(x))
  - (\pa_j \alphabar) (\pa_t \pa_i f')(x, t - \alphabar(x))
  \\
& \qquad -  (\pa_i \pa_j \alphabar) (\pa_t f')(x, t - \alphabar(x))
+ (\pa_i \alphabar) \, (\pa_j \alphabar) (\pa_t^2 f')(x, t - \alphabar(x)).
\end{align*}
Hence
\begin{align*}
(\L \fhat')(x,t) & = F(x,t-\alphabar(x)),
\end{align*}
where
\begin{align*}
F  & = \L f' + 2 a_{ij} \pa_i \alphabar \, \pa_t \pa_j f' - a_{ij} \pa_i \alphabar \pa_j \alphabar \pa_t^2 f' 
+ (\calE - c) \alphabar \, \pa_t f'
\\
& = (\L - \L') f' + \L' f' + 2 (\nabla \alphabar)^T A \, \nabla (\pa_t f') - (\nabla \alphabar)^T A (\nabla \alphabar)\, \pa_t^2 f' 
+ (\calE - c) \alphabar \, \pa_t f'.
\end{align*}

%%%%%%%%

\noindent
\underline{Derivation of \eqref{eq:firstalphabar}}

Here 
\[
\calE := a_{ij} \pa_i \pa_j + b_i \pa_i + c, \qquad
\calE' := a'_{ij} \pa_i \pa_j + b_i \pa_i + c,
\qquad \fbar_0 = f_0 - f'_0.
\]
Since
\begin{align*}
2 (\nabla \alpha)^T A (\nabla f_0) + ((\calE -c) \alpha) f_0 & =0, \qquad \text{on } \Bbar,
\\
2 (\nabla \alpha')^T A' (\nabla f'_0) + ((\calE' -c) \alpha') f_0' &=0, \qquad \text{on } \Bbar.
\end{align*}
we have
\begin{align*}
\left ( 2 (\nabla \alpha)^T A \nabla + (\calE-c) \alpha \right ) \fbar_0
& = - \left ( 2 (\nabla \alpha)^T A \nabla +  (\calE-c) \alpha \right ) f'_0
\\
& = - \left ( 2 (\nabla \alphabar)^T A \nabla + (\calE-c) \alphabar \right ) f'_0
- \left ( 2 (\nabla \alpha')^T A\nabla + (\calE -c) \alpha' \right ) f'_0
\\
& = - \left ( 2 (\nabla \alphabar)^T A \nabla + (\calE-c) \alphabar) \right ) f'_0
- \left ( 2 (\nabla \alpha')^T \Abar \nabla + (\calE-\calE') \alpha' \right ) f'_0
\\
& \qquad -  \left ( 2 (\nabla \alpha')^T A' \nabla +  (\calE' -c)\alpha' \right ) f'_0
\\
& = - \left ( 2 (\nabla \alphabar)^T A \nabla + (\calE -c) \alphabar \right ) f'_0
- \left ( 2 (\nabla \alpha')^T \Abar \nabla + \abar_{ij} \pa_i \pa_j \alpha' \right ) f'_0.
\end{align*}

%%%%%%%

\noindent
\underline{Derivation of \eqref{eq:LUmalpha}}

For any function $f(x)$ on $D$, we have defined (using Einstein's summation convention) the operators
\begin{align*}
\calE f = a_{ij} \pa_i \pa_j f + b_i \pa_i f + cf, 
\qquad \T f  =  a_{ij} \pa_i \alpha \, \pa_j f.
\end{align*}
Fix a positive integer $m$ and define
\[
U_m(x,t) = \sum_{k=-1}^{m} f_k(x) K_k (t- \alpha(x)), \qquad (x,t) \in D \times \R.
\]
For $k \geq -1$, on $D \times \R$
\begin{align*}
\pa_j \left ( f_k(x) K_k(t- \alpha(x)) \right )
& = \pa_j f_k \, K_k(t- \alpha(x)) - f_k \, \pa_j \alpha \, K_{k-1}(t-\alpha(x))
\\
\pa_i \pa_j \left ( f(x) K_k(t- \alpha(x)) \right )
& = \pa_i \pa_j f_k \, K_k(t-\alpha(x)) - \pa_j f_k \, \pa_i \alpha \, K_{k-1}(t-\alpha(x))
- \pa_i f_k \, \pa_j \alpha \,  K_{k-1}(t-\alpha(x))
\\
& \qquad - f_k \pa_i \pa_j \alpha K_{k-1}(t-\alpha(x))
+ f_k \, \pa_i \alpha \, \pa_j \alpha \, K_{k-2}(t - \alpha(x)).
\end{align*}
Hence, using the Einstein summation convention and that $a_{ij} \pa_i \alpha \, \pa_j \alpha =1$, we have
\begin{align*}
a_{ij} \, \pa_i \pa_j \left ( f(x) K_k(t- \alpha(x)) \right )
& = (a_{ij} \pa_i \pa_j f_k) K_k(t-\alpha)  - \left ( 2 a_{ij} \pa_i \alpha \, \pa_j f_k  + a_{ij} \pa_i \pa_j \alpha \, f_k \right )
\, K_{k-1}(t-\alpha) 
\\
& \qquad + f_k K_{k-2}(t - \alpha),
\\
b_i \, \pa_i  \left ( f_k(x) K_k(t- \alpha(x)) \right ) 
& = b_i \pa_i f_k \, K_k(t-\alpha) - ( b_i \pa_i \alpha) f_k K_{k-1}(t-\alpha).
\end{align*}
We also have
\[
\pa_t^2 \left ( f_k(x) K_k(t- \alpha(x)) \right )  = f_k \, K_{k-2}(t - \alpha(x)).
\]
Therefore
\begin{align*}
\L U_m & = \sum_{k=0}^{m} \left (  2 a_{ij} \pa_i \alpha \, \pa_j f_k  + a_{ij} \pa_i \pa_j \alpha \, f_k 
+ b_i \pa_i \alpha f_k - a_{ij} \pa_i \pa_j f_{k-1} - b_i \pa_i f_{k-1} - c f_{k-1} \right ) K_{k-1}(t-\alpha(x))
\\
& \qquad + ( 2 a_{ij} \pa_i \alpha \, \pa_j f_{-1}  + a_{ij} \pa_i \pa_j \alpha \, f_{-1}
+ b_i \pa_i \alpha f_{-1} ) K_{-2}(t-\alpha(x) ) 
\\
& \qquad + ( - a_{ij} \pa_i \pa_j f_m - b_i \pa_i f_m - c f_m) K_m(t - \alpha(x))
\\
& = \sum_{k=0}^m ( (2\T + (\calE - c) \alpha) f_k - \calE f_{k-1})(x) \,  K_{k-1}(t-\alpha(x)) 
+ (2\T + (\calE - c) \alpha) f_{-1}(x) \, K_{-2}(t-\alpha(x))
\\
& \qquad - (\calE f_m)(x) \, K_m(t-\alpha(x)).
\end{align*}

%%%%%%%

%%%%%%%%%%%%%%%%%%%%
%%%%%%%%%%%%%%%%%%%%%
\bibliographystyle{plain}
\bibliography{references}

%%%%%%%%%%%%%%%
\end{document}